\documentclass[a4paper,11pt]{article}

\usepackage[utf8]{inputenc}
\usepackage{float}
\usepackage{amsthm,amsmath,amsfonts,amssymb}

\usepackage{soul}
\usepackage{graphicx}
\usepackage{xcolor}
\usepackage{geometry}

\usepackage[unicode]{hyperref}
\usepackage{wrapfig}
\usepackage{float}

\usepackage{listings}
\lstset{
	breaklines=true,
	basicstyle=\ttfamily}

\usepackage{seqsplit}

\usepackage{enumitem}
\usepackage{fullpage}
\usepackage{hyperref}
\usepackage{lineno}

\usepackage[font=small,labelfont=bf]{caption}
\usepackage[font=small,labelfont=normalfont]{subcaption}

\newtheorem{theorem}{Theorem}

\newtheorem{lemma}[theorem]{Lemma}
\newtheorem{claim}[theorem]{Claim}

\newtheorem{proposition}[theorem]{Proposition}

\newtheorem{observation}{Observation}
\newtheorem{definition}{Definition}
\newtheorem{problem}{Problem}
\newtheorem{conjecture}{Conjecture}
\newtheorem{fact}[theorem]{Fact}

\DeclareMathOperator{\RPP}{\mathbb{R}\mathcal{P}^2}

\DeclareMathOperator{\conv}{conv}
\DeclareMathOperator{\area}{area}

\DeclareMathOperator{\openu}{{\rm Open-Up}}
\DeclareMathOperator{\opend}{{\rm Open-Down}}
\DeclareMathOperator{\openud}{{\rm Open-Up\&Down}}
\DeclareMathOperator{\opendu}{{\rm Open-Down\&Up}}
\DeclareMathOperator{\opendiag}{{\rm Open-Diagonally}}

\usepackage{xspace}
\def\projective{{projective}\xspace} 
\def\affine{{affine}\xspace}

\usepackage{todonotes}

\widowpenalty10000
\clubpenalty10000
\interfootnotelinepenalty=10000

\newenvironment{romanenumerate}[1][]
{\begin{enumerate}[label=(\roman*),#1]}
	{\end{enumerate}}

\def\inst#1{$^{#1}$}

\date{\today}

\title{Erd\H{o}s--Szekeres-type problems in the real projective plane\footnote{An extended abstract of this paper appeared in the Proceedings of the 38th International Symposium on Computational Geometry (SoCG 2022).}}

\begin{document}
	\author{Martin Balko\inst{1}\thanks{M.\ Balko was supported by the grant no.~21-32817S of the Czech Science Foundation (GA\v{C}R), by the Center for Foundations of Modern Computer Science (Charles University project UNCE/SCI/004). This article is part of a project that has received funding from the European Research Council (ERC) under the European Union's Horizon 2020 research and innovation programme (grant agreement No 810115).
		} 
		\and
		Manfred Scheucher\inst{2}\thanks{M.\ Scheucher was supported by the DFG Grant SCHE~2214/1-1.}
		\and
		Pavel Valtr\inst{1}\thanks{P.\ Valtr was supported by the grant no.~21-32817S of the Czech Science Foundation (GA\v{C}R).}
	}

	\maketitle

	\begin{center}
		{\footnotesize
			\inst{1} 
			Department of Applied Mathematics, \\
			Faculty of Mathematics and Physics, Charles University, Czech Republic \\
			\texttt{balko@kam.mff.cuni.cz}
			\\\ \\
			\inst{2} 
			Institut f\"ur Mathematik, Technische Universit\"at Berlin, Germany\\
			\texttt{scheucher@math.tu-berlin.de}
			\\\ \\
		}
	\end{center}

	
	\begin{abstract}
		We consider point sets in the real projective plane $\RPP$ and explore variants of classical extremal problems about planar point sets in this setting, with a main focus on Erd\H{o}s--Szekeres-type problems.
		
		We provide asymptotically tight bounds for a variant of the Erd\H{o}s--Szekeres theorem about point sets in convex position in $\RPP$, which was initiated by Harborth and M\"oller in 1994.
		The notion of convex position in $\RPP$ agrees with the definition of convex sets introduced by Steinitz in 1913. 
		
		For $k \geq 3$, an \emph{(\affine) $k$-hole} in a finite set $S \subseteq \mathbb{R}^2$ is a set of $k$ points from $S$ in convex position with no point of $S$ in the interior of their convex hull. 
		After introducing a new notion of $k$-holes for points sets from $\RPP$, called \emph{projective $k$-holes}, we find arbitrarily large finite sets of points from $\RPP$ with no \projective 8-holes, providing an analogue of a classical planar construction by Horton from 1983.
		We also prove that they contain only quadratically many \projective $k$-holes for $k \leq 7$.
		On the other hand,
		we show that
		the number of $k$-holes can be substantially larger in~$\RPP$ than in $\mathbb{R}^2$
		by constructing,
		for every $k \in \{3,\dots,6\}$,
		sets of $n$ points from $\mathbb{R}^2 \subset \RPP$ with $\Omega(n^{3-3/5k})$ \projective $k$-holes and only $O(n^2)$ \affine $k$-holes.
		Last but not least, we prove several other results, for example about projective holes in random point sets in $\RPP$ and about some algorithmic aspects.
		
		The study of extremal problems about point sets in $\RPP$ opens a new area of research, which we support by posing several open problems.
	\end{abstract}

	\goodbreak
	\clearpage
	
	\section{Introduction}
	\label{sec:introduction}

	\subsection{Erd\H{o}s-Szekeres-type results in the Euclidean plane}
	
	Throughout the whole paper, we consider sets $S$ of points from the Euclidean plane $\mathbb{R}^2$ that are finite and in \emph{general position}, that is, no three points of $S$ lie on a common line.
	We say that a set $S$ of $k$ points in the Euclidean plane is in \emph{convex position} if $S$ forms the vertex set of a convex polygon, which we call a \emph{$k$-gon} or an \emph{\affine $k$-gon}. 
	
	In 1935, Erd\H{o}s and Szekeres~\cite{ErdosSzekeres1935} showed that, for every integer $k\ge3$, there is a smallest positive integer $ES(k)$ such that every finite set of at least $ES(k)$ points in the plane in general position contains a subset of $k$ points in convex position.
	This result, known as the \emph{Erd\H{o}s--Szekeres theorem}, was one of the starting points of both discrete geometry and Ramsey theory.
	It motivated various lines of research that led to several important results as well as to many difficult open problems.
	For example, there were many efforts to determine the growth rate of the function $ES(k)$.
	Erd\H{o}s and Szekeres~\cite{ErdosSzekeres1935} showed $ES(k) \leq \binom{2k-4}{k-2}+1$ and conjectured that $ES(k)=2^{k-2}+1$ for every $k \geq 2$.
	This conjecture, known as the \emph{Erd\H{o}s--Szekeres conjecture}, was later supported by Erd\H{o}s and Szekeres~\cite{ErdosSzekeres1960}, who proved the matching lower bound $ES(k) \geq 2^{k-2}+1$. 
	The Erd\H{o}s--Szekeres conjecture was verified for $k \leq 6$~\cite{SzekeresPeters2006} (see also \cite{Maric2019,Scheucher2020}), but it is still open for $k \ge 7$. 
	In fact, Erd\H{o}s even offered \$500 reward for its solution.
	The currently best upper bound $ES(k) \le 2^{k+O(\sqrt{k\log{k}})}$ is due to Holmsen, Mojarrad, Pach, and Tardos~\cite{HolmsenMPT2020}, who improved an earlier breakthrough by Suk~\cite{Suk2017} who showed $ES(k) \le 2^{k+O(k^{2/3}\log{k})}$.
	Altogether, these estimates give, for every $k \geq 2$,
	\begin{equation}
		\label{eq-ESbounds}
		2^{k-2} +1 \le ES(k) \le 2^{k+O(\sqrt{k\log{k}})}.
	\end{equation}
	
	Several variations of the Erd\H{o}s--Szekeres theorem have been studied in the literature.
	In the 1970s, Erd\H{o}s~\cite{Erdos1978} asked whether there is a smallest positive integer $h(k)$ such that every set $S$ of at least $h(k)$ points in the plane in general position contains an \emph{(\affine) $k$-hole}, which is a 
	convex polygon spanned by a subset of $k$ points from $S$
	that does not contain any point from $S$ in its interior.
	In other words, a $k$-hole in a finite points set $S$ in the plane in general position is a $k$-gon which is \emph{empty} in~$S$,
	that is, its interior does not contain any point from~$S$.
	After Horton~\cite{Horton1983} constructed arbitrarily large point sets with no 7-hole, it took more than 20 years until Gerken~\cite{Gerken2008} and Nicolas~\cite{Nicolas2007} independently showed that every sufficiently large set of points contains a 6-hole.
	Therefore, $h(k)$ is finite if and only if $k \leq 6$.
	
	Estimating the minimum number of $k$-holes is another example of a classical Erd\H{o}s--Szekeres-type problem.
	For a fixed integer $k \geq 3$ and a positive integer $n$, let $h_k(n)$ be the minimum number of $k$-holes in any finite set of $n$ points in the plane.
	The growth rate of the function $h_k(n)$ was also studied extensively.
	Horton's result implies $h_k(n) = 0$ for $k \geq 7$.
	The minimum numbers of 3- and 4-holes are known to be quadratic in $n$, but we only have the bounds $\Omega(n \log^{4/5}n) \leq h_5(n) \leq O(n^2)$ and $\Omega(n) \leq h_6(n) \leq O(n^2)$~\cite{ABHKPSVV2020_JCTA,BaranyValtr2004} for 5- and 6-holes, respectively.
	However, it is widely conjectured that $h_5$ and $h_6$ are also both quadratic in $n$.
	
	In this paper, we consider analogous Erd\H{o}s--Szekeres-type problems in the real projective plane $\RPP$.
	We define notions of convex position, $k$-gons, and $k$-holes in $\RPP$ and study the corresponding extremal problems, providing several new results as well as numerous open problems in this new line of research.

	\subsection{Convex sets in the real projective plane}
	\label{sec:prelim}
	
	As in the planar case, we consider only sets $P$ of points from the real projective plane $\RPP$ that are
	finite and in \emph{general position}, that is, no three points from $P$ lie on a common projective line.
	We say that $P$ is in \emph{\projective convex position} if it is a set in convex position in some Euclidean plane $\rho \subset \RPP$.
	Recall that by removing a projective line from $\RPP$ 
	one obtains a Euclidean plane.
	Following the notation introduced by Steinitz~\cite{Steinitz1913}, we say that a subset $X$ of $\RPP$ is \emph{semiconvex} if any two points of $X$ can be joined by a line segment fully contained in~$X$. 
	The set $X$ is \emph{convex} if it is semiconvex and does not contain some projective line, that is, $X$ is contained in a plane $\rho \subset \RPP$; see also~\cite{deGrootdeVries1958}.
	A \emph{\projective convex hull} of a set $Y \subset \RPP$ is an inclusion-wise minimal convex subset of $\RPP$ containing~$Y$.
	We note that, unlike the situation in the plane, a \projective convex hull of $Y$ does not have to be determined uniquely; see Figure~\ref{fig:gonsAndHoles}.
	
	\begin{figure}[htb]
		\centering
		\includegraphics{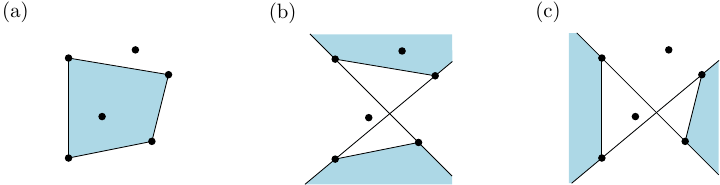}
		\caption{An example of three \projective 4-gons determined by the same subset of four points from a set $P$ of six points in $\RPP$. The \projective 4-gons in (a) and (b) are not \projective 4-holes in $P$, but the \projective 4-gon in~(c) is a \projective 4-hole in~$P$.
		}
		\label{fig:gonsAndHoles}
	\end{figure}
	
	\begin{definition}[A \projective $k$-gon]\label{def:projective-k-gon}
		For a positive integer $k$ and a finite set $P$ of points from~$\RPP$ in general position, a \emph{\projective $k$-gon} determined by $P$ 
		is a \projective convex hull of a set $G$ of $k$ points from $P$ which contains all points of $G$ on its boundary; see Figure~\ref{fig:gonsAndHoles}.
	\end{definition}
	
	The notion ``\projective $k$-gon'' in~$\RPP$ is a natural analogue of the notion ``\affine $k$-gon'' in~$\mathbb{R}^2$, 
	since \projective $k$-gons in~$\RPP$ are exactly
	those subsets of~$\RPP$ which are affine $k$-gons in some of the planes contained in~$\RPP$.
	
	Since a \projective convex hull is not determined uniquely, a set of $k$ points in $\RPP$ can determine several \projective $k$-gons. In particular,
	it is not difficult to verify that
	\begin{romanenumerate}
		\item\label{item-3gon}
		any three points in general position in $\RPP$ determine four \projective 3-gons, 
		\item\label{item-4gon}
		any four points in general position in $\RPP$ determine three \projective 4-gons,
		\item\label{item-5gon}
		any five points in general position in $\RPP$ determine exactly one \projective 5-gon, and
		\item
		any $k \ge 6$ points in general position in $\RPP$ determine at most one \projective $k$-gon.
	\end{romanenumerate}
	
	We also introduce the following natural analogue of holes in the real projective plane.
	
	\begin{definition}[A \projective $k$-hole]
		For an integer $k \geq 3$ and a finite set $P$ of points from $\RPP$ in general position, a \emph{\projective $k$-hole} in $P$ is a \projective $k$-gon determined by points from $P$ that does not contain any point from $P$ in its interior; see Figure~\ref{fig:gonsAndHoles}.
	\end{definition}
	
	The notion of a ``\projective $k$-hole'' in~$\RPP$ is a natural analogue of the notion of an ``(\affine) $k$-hole'' in~$\mathbb{R}^2$, since \projective $k$-holes in~$\RPP$ are exactly
	those subsets of~$\RPP$ which are (\affine) $k$-holes in some of the planes contained in~$\RPP$.
	
	We note that, again, a single set of $k\in\{3,4\}$ points in general position in $\RPP$ can determine several different \projective $k$-holes.
	Also note that, if $H$ is a \projective $k$-hole in a finite set $P$ of points from $\RPP$ in general position, 
	then in every affine plane $\rho\subset \RPP$ containing~$H$, the set $H$~is an \affine $k$-hole.
	A subset of $\RPP$ is a \emph{\projective hole} in $P$ if it is a \projective $k$-hole in~$P$ for some integer $k \geq 3$.
	
	We also describe the following alternative view on \projective $k$-gons and  $k$-holes via planar point sets.
	A \emph{double chain}~\cite{hurNoyUrru99}
	is a set $S=A \cup B$ of $k$ points from $\mathbb{R}^2$ with $A=\{s_1,\dots,s_m\}$ and $B=\{s_{m+1},\dots,s_{k}\}$
	for some $m$ with $1 \leq m \leq k-1$
	such that,
	for every $i=1,\ldots,k$, 
	the line $\overline{s_is_{i+1}}$ separates 
	$A\setminus\{s_i,s_{i+1}\}$ from $B\setminus\{s_i,s_{i+1}\}$ 
	(indices modulo $k$); see Figure~\ref{fig:doubleChain}. 
	The sets $A$ and $B$ are the \emph{chains} of the double chain.
	For a line~$\ell$ not separating~$A$,
	let $H_{\ell}^A$ be the closed half-plane bounded by~$\ell$ 
	that contains~$A$ and we similarly define $H_{\ell}^B$.
	The \emph{double chain $k$-wedge} of $S$ is the union $W_A \cup W_B$
	where 
	$
	W_A = 
	\bigcap_{i=0}^{m} H^A_{\overline{s_is_{i+1}}}
	$
	and
	$
	W_B = 
	\bigcap_{i=m}^{k} H^B_{\overline{s_is_{i+1}}}
	$.

	\begin{observation}
		\label{obs-gons}
		Let $P$ be a set of $k$ points from $\RPP$ in general position
		and let $\rho\subset \RPP$ be an affine plane containing~$P$.
		A convex set $G$ in $\RPP$ is a \projective $k$-gon determined by~$P$
		if and only if, in $\rho$,
		$G$ is either 
		a convex polygon with $k$ vertices (that is, an \affine $k$-gon) 
		or a double chain $k$-wedge.\qed
	\end{observation}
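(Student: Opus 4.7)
The plan is to use the structural definition of convex sets in $\RPP$ due to Steinitz: every convex set in $\RPP$ lies in some affine plane $\rho'\subset\RPP$, where it is convex in the usual Euclidean sense. In particular, a projective $k$-gon determined by $P$ is, by definition, an affine $k$-gon in some affine plane $\rho'$ containing $P$. The observation thus reduces to a concrete question: given our fixed affine plane $\rho$ containing $P$, how does an affine $k$-gon in another affine plane $\rho'$ look when viewed in $\rho$?

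\textbf{Forward direction.} Let $G$ be a projective $k$-gon determined by $P$, realized as an affine $k$-gon in an affine plane $\rho'\subset\RPP$ with vertices $s_1,\ldots,s_k$ in cyclic order around its boundary. Let $\ell_\infty$ be the projective line with $\rho=\RPP\setminus\ell_\infty$ (an affine line inside $\rho'$), and similarly $\ell'_\infty$ for $\rho'$. If $\ell_\infty$ does not meet $G$ in $\rho'$, then $G$ remains bounded in $\rho$, yielding an affine $k$-gon. Otherwise, since $P\subset\rho$ and $P$ is in general position, $\ell_\infty$ misses all of $P$ and crosses $G$ along an open chord that cuts exactly two edges; after cyclic relabeling these are $\overline{s_m s_{m+1}}$ and $\overline{s_k s_1}$, and the vertices split into $A=\{s_1,\ldots,s_m\}$ and $B=\{s_{m+1},\ldots,s_k\}$ on opposite sides of $\ell_\infty$ in $\rho'$. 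The two sub-polygons of $G$ cut off by $\ell_\infty$ then appear in $\rho$ as two unbounded regions separated by the affine line $\ell'_\infty\cap\rho$, and one shows these regions are precisely $W_A$ and $W_B$ by verifying (a) the double chain separation condition, that each edge $\overline{s_i s_{i+1}}$ separates $A\setminus\{s_i,s_{i+1}\}$ from $B\setminus\{s_i,s_{i+1}\}$ in $\rho$, and (b) that the defining intersection of half-planes coincides with the image of the corresponding sub-polygon in $\rho$. Both (a) and (b) come from translating the fact that ``each edge of a convex polygon has all other vertices on one side of its supporting line'' through the change of affine chart, noting that the edges crossed by $\ell_\infty$ (the bridge edges) swap which vertices lie on which side.

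\textbf{Backward direction.} If $G$ is an affine $k$-gon in $\rho$, take $\rho'=\rho$ and $G$ is already a projective $k$-gon. If $G=W_A\cup W_B$ is a double chain $k$-wedge, the plan is to find an affine chart $\rho'$ in which $G$ appears as a bounded convex polygon with vertices $s_1,\ldots,s_k$ in cyclic order. I would pick a line $\ell\subset\rho$ that crosses both bridge edges $\overline{s_k s_1}$ and $\overline{s_m s_{m+1}}$ transversally and avoids $P$ (such a line clearly exists, since $\ell$ can be chosen inside the strip between the two chains where neither $W_A$ nor $W_B$ reaches). Setting $\rho':=\RPP\setminus\bar\ell$, where $\bar\ell$ is the projective closure of $\ell$, the two wedges $W_A$ and $W_B$ glue through what was $\ell_\infty$ into a single bounded region whose boundary visits $s_1,\ldots,s_k$ cyclically; convexity in $\rho'$ then follows from the very same half-plane intersections that defined $W_A$ and $W_B$ in $\rho$, reinterpreted after the projective change of chart.

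\textbf{Main obstacle.} The heart of the argument is the bookkeeping in the forward direction when $\ell_\infty$ crosses $G$: matching the two sub-polygons of $G$ with the wedges $W_A$ and $W_B$ combinatorially defined in $\rho$, and checking that the half-planes $H^A_{\overline{s_i s_{i+1}}}$ and $H^B_{\overline{s_i s_{i+1}}}$ are indeed well-defined (i.e.\ that the double chain separation condition is forced). Everything else—the sub-case $\ell_\infty\cap G=\emptyset$, the trivial direction $G\subset\rho$, and the construction of $\rho'$ in the backward direction—is a direct consequence of Steinitz's definition and standard properties of affine charts in $\RPP$.
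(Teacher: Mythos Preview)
Your proposal is correct. The paper gives no proof of this observation at all---it simply appends a \qed, treating the statement as evident from the definitions of convex sets in $\RPP$ and of double chain $k$-wedges. Your argument supplies exactly the chart-change reasoning the authors are tacitly relying on: realize $G$ as an affine $k$-gon in some chart $\rho'$, then analyze whether the line at infinity $\ell_\infty$ of the given chart $\rho$ misses $G$ (yielding an affine $k$-gon in $\rho$) or cuts $G$ in a chord (yielding the two wedges $W_A$, $W_B$); and conversely, given a double chain $k$-wedge in $\rho$, pass to a chart $\rho'$ whose line at infinity separates $W_A$ from $W_B$ so that the wedge becomes a bounded convex $k$-gon. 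One small wording issue in your backward direction: the ``bridge edges'' $\overline{s_k s_1}$ and $\overline{s_m s_{m+1}}$ are not segments in $\rho$ but lines along which $W_A$ and $W_B$ have unbounded rays, so you are really choosing $\ell$ in one of the two opposite sectors at their intersection point (or in the complementary strip if they are parallel) so that $\ell$ is disjoint from $W_A\cup W_B$; with that adjustment the construction of $\rho'$ goes through as you describe.
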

	
	\begin{figure}[htb]
		\centering
		\includegraphics{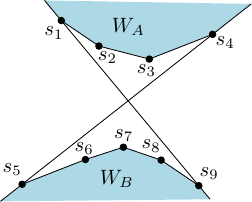}
		\caption{A double chain $S$ on 9 points and the corresponding double chain 9-wedge.
		}
		\label{fig:doubleChain}
	\end{figure}

	\begin{observation}
		\label{obs-holes}
		Let $P$ be a set of $k$ points from $\RPP$ in general position
		and let $\rho\subset \RPP$ be an affine plane containing~$P$.
		A convex set $H$ in $\RPP$ is a \projective $k$-hole in~$P$
		if and only if, in $\rho$,
		$H$ is either
		a convex polygon with $k$ vertices that is empty in~$P$ (that is, an \affine $k$-hole) 
		or 
		a double chain $k$-wedge that is empty in~$P$.\qed
	\end{observation}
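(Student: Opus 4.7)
The plan is to reduce the claim directly to the preceding Observation on \projective $k$-gons. By definition, a \projective $k$-hole in~$P$ is a \projective $k$-gon determined by (the relevant) points of~$P$ whose interior contains no point of~$P$. Applying the preceding observation to the convex set $H$, one immediately obtains that, viewed in the affine chart~$\rho$, the set $H$ is either a convex polygon with $k$ vertices (an \affine $k$-gon) or a double chain $k$-wedge spanned by the $k$ vertex points. Thus the classification of possible shapes of $H$ is already settled; all that remains is to translate the emptiness condition from $\RPP$ to~$\rho$.

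For this translation, I would use that $\rho$ is an open subset of $\RPP$ containing all of~$P$. Consequently, the $\RPP$-interior of $H$ intersected with~$\rho$ coincides with the $\rho$-interior of $H\cap\rho$, so that a point of~$P$ lies in the $\RPP$-interior of $H$ if and only if it lies in the $\rho$-interior of $H\cap\rho$. In the affine polygon case this is trivial since $H\subset\rho$. In the double chain wedge case the projective convex hull of the $k$ vertices closes up through the line at infinity of~$\rho$, but all points of $H$ outside~$\rho$ lie on that projective line; since $P\subset\rho$, those points cannot contribute to the emptiness condition with respect to~$P$.

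Combining the two steps yields both directions of the equivalence: $H$ is a \projective $k$-hole in~$P$ precisely when, in~$\rho$, $H\cap\rho$ is an \affine $k$-hole in~$P$ or an empty double chain $k$-wedge. I expect no substantive obstacle here; the statement is essentially a direct corollary of the preceding observation on \projective $k$-gons, and the only (minor) technical point is the bookkeeping identifying interiors across the $\RPP$/$\rho$ boundary in the double chain wedge case.
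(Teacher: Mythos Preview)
Your proposal is correct and matches the paper's approach: the paper marks this observation with a \qed and gives no proof, treating it as an immediate consequence of the preceding Observation~\ref{obs-gons} on \projective $k$-gons together with the definition of a \projective $k$-hole. Your reduction to Observation~\ref{obs-gons} plus the routine translation of the emptiness condition between $\RPP$ and the affine chart~$\rho$ is exactly the intended argument.
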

	
	Convex sets in the real projective plane were considered by many authors~\cite{BrachoCalvillo1991,deGrootdeVries1958,Dekker1955,Haalmeyer1917,Kneser1921} and their study goes back more than 100 years to Steinitz~\cite{Steinitz1913}.
	Besides the article of Harborth and M\"oller \cite{HarborthMoeller1993}, where they introduced the notion of  \projective $k$-gons,
	we are not aware of any further literature on \projective $k$-gons or \projective $k$-holes.
	Thus, our goal is to conduct a first extensive study of  extremal properties of point sets in~$\RPP$.

	\section{Our results}
	\label{sec:results}
	
	First, we consider an analogue of the Erd\H{o}s--Szekeres theorem in the real projective plane. 
	For an integer $k \geq 2$, let $ES^p(k)$ be the minimum positive integer $N$ such that every set of at least $N$ points in $\RPP$ in general position contains $k$ points in \projective convex position.
	Due to Observation~\ref{obs-gons},  $ES^p(k)$ equals the minimum positive integer such that every set of at least $ES^p(k)$ points in $\mathbb{R}^2$ in general position contains either $k$ points in convex position or a double chain of size $k$.
	As already noted in \cite{HarborthMoeller1993},
	one immediately gets $ES^p(k) \leq ES(k)$.
	On the other hand,
	$ES^p(k) \geq ES(\lceil k/2\rceil)$, since the largest chain of a double chain of size $k$ has at least $\lceil k/2\rceil$ points.
	Thus, by~\eqref{eq-ESbounds}, we have $2^{\lceil k/2\rceil-2}+1\leq ES^p(k) \leq 2^{k + O(\sqrt{k\log{k}})}$ for every $k \geq 2$ and, in particular, the numbers $ES^p(k)$ are finite.
	As our first result, we prove an almost matching lower bound on $ES^p(k)$.
	
	\begin{theorem}
		\label{thm:projective_k_gon_theorem}
		There are constants $c,c'>0$ such that, for every integer $k \geq 2$,
		\[
		2^{k-c\log{k}} \leq ES^p(k) \leq 2^{k+c'\sqrt{k\log{k}}}.
		\]
	\end{theorem}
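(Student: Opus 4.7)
The plan is to prove the two bounds separately. The upper bound $ES^p(k)\le 2^{k+c'\sqrt{k\log k}}$ is essentially immediate: by Observation~\ref{obs-gons}, every projective $k$-gon in $\RPP$ is, in some affine chart of $\RPP$, either an affine $k$-gon or a double chain $k$-wedge, and the latter still forces $k$ points in convex position in yet another chart; in either case any $k$-point witness of projective convex position is in convex position in some affine chart. Hence $ES^p(k) \le ES(k)$, and the Holmsen--Mojarrad--Pach--Tardos bound~\cite{HolmsenMPT2020} delivers the upper bound with the correct form.

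For the lower bound, I would use Observation~\ref{obs-gons} to reformulate the task in the Euclidean plane: it suffices to build, for each $k$, a finite set $P \subset \mathbb{R}^2 \subset \RPP$ in general position of size at least $2^{k-c\log k}$ containing neither an affine $k$-gon nor a double chain of size $k$. The standard Erd\H{o}s--Szekeres extremal construction of size $2^{k-2}$ takes care of the first forbidden substructure but in general contains many double chains, so a refinement is needed. My plan is a parametric recursive construction: define $f(a,b,c)$ to be the maximum size of a planar set in general position with no $a$-cup, no $b$-cap, and no double chain of size $c$, and establish a merging recurrence of the shape
\[
f(a,b,c) \;\ge\; f(a-1,b,c) + f(a,b-1,c)
\]
whenever $c$ is sufficiently large compared to $a+b$, obtained by placing a ``left'' copy and a ``right'' copy of suitable sub-configurations far apart horizontally and at carefully chosen heights. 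Solving the recurrence and then specialising to $a=b=c=k$, while accounting for a polynomial correction in the boundary between the two copies, should yield $f(k,k,k) \ge 2^{k - O(\log k)}$, which is the desired lower bound.

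The main obstacle is verifying that the merging step does not introduce a new double chain of size $c$. A hypothetical such double chain $A \cup B$ in the merged set would have each of its two convex chains split in some way between the left and the right copy; using the defining separation property of double chains (each consecutive edge $\overline{s_i s_{i+1}}$ inside $A$ or $B$ separates the remainder of $A$ from the remainder of $B$), one needs to show that any such splitting forces either a forbidden cup, a forbidden cap, or a forbidden smaller double chain inside one of the two sub-configurations. The polynomial factor $k^{O(1)}$ loss, compared to the classical $2^{k-2}$, will arise from having to reserve a small buffer in each recursive step so that chains crossing from one copy to the other cannot extend too far before being blocked. I expect this case analysis, rather than the recursion or the algebra, to be the delicate part of the argument.
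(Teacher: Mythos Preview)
Your upper bound is correct and identical to the paper's.

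For the lower bound, your overall plan---recursively merge two sub-configurations placed deep below/high above one another, and then argue that no double chain of size $k$ can straddle the two parts---is exactly the paper's approach. But there is a genuine slip in your specialisation: taking $a=b=c=k$ does not do the job, because forbidding $k$-cups and $k$-caps does \emph{not} forbid affine $k$-gons (a convex $k$-gon only forces a cup or a cap of size roughly $k/2$, so a set with no $k$-cup and no $k$-cap can still contain $(2k-3)$-gons). You need $a,b$ around $k/2$ so that the cup/cap constraint already rules out $k$-gons; with that correction your recurrence, iterated while $a+b\le c=k$, reaches $a=b\approx k/2$ and produces a set of size $\binom{k-O(1)}{k/2-O(1)}=2^{k-O(\log k)}$.

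The paper's execution is also simpler than what you outline: no ``buffer'' and no third parameter are needed. The \emph{unmodified} Erd\H{o}s--Szekeres extremal set $S_{\lfloor k/2\rfloor-1,\lfloor k/2\rfloor-1}$ already contains no double chain of size $k$. The paper verifies this by a short case analysis on how the four extreme vertices $s_1,s_m,r_1,r_{k-m}$ of a hypothetical double chain distribute between the two recursive halves $S_{a-1,u}$ and $S_{a,u-1}$; in each case one chain is forced to lie (almost) entirely in one half and to form a cup or cap there that exceeds the allowed size, giving $k\le a+u-2<k$ or $k\le a+u<k$. The polynomial loss $2^{-O(\log k)}$ therefore comes entirely from the central binomial coefficient, not from any slack reserved during the recursion.
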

	
	The precise value of $ES^p(k)$ is known for small values of $k$.
	For $k \leq 5$, 
	all sets of $k$ points from $\RPP$ determine a \projective $k$-gon by properties~(\ref{item-3gon})--(\ref{item-5gon}) below Definition~\ref{def:projective-k-gon} and thus $ES^p(k)=k$.
	Using SAT-solver-based computations, we have also verified the value $ES^p(6) = 9$,
	which was determined by Harborth and M\"oller \cite{HarborthMoeller1993}.
	This value can also be verified with an
	exhaustive search, or by using 
	the database of order types of planar point sets~\cite{AichholzerKrasserAurenhammerOTDB,AichholzerAurenhammerKrasser2001}
	or the database of (acyclic) oriented matroids \cite{FinschiFukuda2002,FinschiDBOM}. 
	We also found sets of 17 points from~$\RPP$ with no \projective 7-gon, witnessing $ES^p(7) \geq 18$;
	see Listing~\ref{listing:coordinates17}.
	
	\begin{lstlisting}[caption={A set of 17 points from~$\RPP$ with no \projective 7-gon},label={listing:coordinates17},captionpos=b,basicstyle=\ttfamily\footnotesize]
    (45421,43357),     (24010,0),  (6531,31791), (23152,36838), (45274,43314),
    (29306,38700), (23032,36887),  (6349,32066), (23027,43265), (22863,43268), 
    (23082,43301), (22972,43303), (23085,43418),   (206,43533), 
        (0,43593), (24219,43470), (24386,44295)
	\end{lstlisting}
	
	Now, we focus on extremal problems about holes in the real projective plane.
	As our first result, we show that the existence of \projective 8-holes is not guaranteed in large point sets in~$\RPP$, proving an analogue of the result by Horton~\cite{Horton1983}.
	
	\begin{theorem}
		\label{thm:holesExistence}
		For every $n \in \mathbb{N}$, there exist sets of $n$ points from $\RPP$ in general position with no \projective 8-hole.
	\end{theorem}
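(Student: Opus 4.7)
The plan is to construct the required sets explicitly as \emph{Horton sets} $H_n$ of $n$ points placed in $\mathbb{R}^2 \subset \RPP$, and to rule out both flavours of \projective $8$-holes isolated by Observation~\ref{obs-holes}: an \affine $8$-hole, or a double chain $8$-wedge that is empty in~$H_n$. The first flavour is dispatched by Horton's original result~\cite{Horton1983}, which says that $H_n$ contains no \affine $7$-hole; since any seven vertices of an \affine $8$-hole span an \affine $7$-hole, no \affine $8$-hole exists in $H_n$ either.

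For the second flavour, I would fix any eight points $S \subseteq H_n$ and suppose they form a double chain with chains $A$ and $B$ of sizes $m$ and $8-m$ for some $m \in \{1,\dots,7\}$ (by symmetry one may assume $m \le 4$), and then show that the double chain wedge $W_A \cup W_B$ must always contain a further point of~$H_n$, contradicting emptiness. The argument exploits the recursive structure $H_n = H_n^0 \cup H_n^1$ of a Horton set, in which $H_n^0$ sits deep below $H_n^1$, both halves are themselves Horton sets, and every convex cup (respectively cap) in one half is \emph{closed} by a point of the other half lying directly above (respectively below) the cup.

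After distributing the eight points of~$S$ between $H_n^0$ and $H_n^1$, the chains $A$ and $B$ become sub-cups and/or sub-caps inside one or both halves, and the wedges $W_A$ and $W_B$ open upward and/or downward in predictable directions. Using the closing property of the Horton hierarchy, at least one of the two wedges has to swallow an additional Horton point, which yields the desired contradiction.

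The main obstacle is this case analysis of how the eight chain points split between $H_n^0$ and $H_n^1$, especially the balanced case where both chains contain roughly four points and each intersects both halves nontrivially; here neither wedge is manifestly wide and the witness point has to be located by a careful inductive descent into the Horton hierarchy, relying on the fact that at every recursion level each half is again a Horton set with closing points available. Carrying out this enumeration uniformly over all partitions $m \in \{1,\dots,4\}$ and all recursion depths, and combining it with the exclusion of \affine $8$-holes from the first paragraph, will complete the proof.
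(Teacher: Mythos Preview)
Your outline matches the paper's argument almost exactly: Horton sets in $\mathbb{R}^2\subset\RPP$, Observation~\ref{obs-holes} to split into the two flavours, Horton's no-$7$-hole result for the affine flavour, and an inductive case analysis on the top-level partition $H=H_0\cup H_1$ for the double-chain flavour.

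Two remarks on the execution. First, the ``closing property'' you invoke should be sharpened to the precise statement the paper uses, due to Valtr~\cite{Valtr1992a}: a Horton set contains no \emph{open} $4$-cap and no open $4$-cup (smaller caps and cups can be open). This is the workhorse lemma; once you have it, each case reduces to noting that the chains become open cups or caps in $H_0$ or $H_1$ and are therefore of size at most~$3$. Second, the case split is lighter than you fear: rather than tracking how all eight points distribute between $H_0$ and $H_1$, the paper conditions only on where the four \emph{endpoints} $s_1,s_m,r_1,r_{k-m}$ land. The deep-below/high-above property then forces the remaining chain points into one half or the other, and the ``balanced'' case you highlight (both endpoints of each chain in opposite halves) is in fact the easiest, yielding $k\le 6$ immediately; the tight case $k\le 7$ arises when three of the four endpoints lie in the same half.
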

	
	We recall that Theorem~\ref{thm:holesExistence} implies that there are arbitrarily large finite sets of points from~$\RPP$ in general position with no \projective $k$-holes for any $k \geq 8$.
	The proof of Theorem~\ref{thm:holesExistence} uses
	\emph{Horton sets} defined by Valtr~\cite{Valtr1992a} as a generalization
	of a construction of Horton~\cite{Horton1983} of an arbitrarily large planar point set in general position (so-called \emph{canonical Horton set}) with no $7$-hole; see Section~\ref{sec:horton_sets_proof} for the definition of Horton sets.
	Horton sets contain no \affine 7-holes in~$\mathbb{R}^2$ and we actually show that, if they are embedded in~$\RPP$, they contain no \projective 8-holes.
	Moreover, we show quadratic bounds on the number of \projective $k$-holes in Horton sets
	for $k\le 7$.
	
	\begin{theorem}
		\label{theorem:horton_sets}
		Let $H$ be a Horton set 
		of size $n$ in $\mathbb{R}^2 \subset \RPP$. 
		Then $H$ has $\Theta(n^2)$ \projective $k$-holes for every $k\le 7$.
		Moreover, if $H$ is the canonical Horton set of size $n=2^z$, then the number of \projective 3-holes in $H$ equals
		\[
		4.25 \cdot 2^{2z} + 2^z(-3z^2/2-z/2-5.5) -4z + 2 
		= 4.25n^2 -1.5n \log^2 n-\Theta(n \log n)
		.
		\]
	\end{theorem}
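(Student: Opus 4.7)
The framework is to use Observation~\ref{obs-holes}: each \projective $k$-hole in $H \subset \mathbb{R}^2 \subset \RPP$ is either an \affine $k$-hole or an empty double chain $k$-wedge. I would analyze these two contributions separately, exploiting the recursive structure of Horton sets: recall that a Horton set $H$ of size $n$ decomposes as $H = H_0 \cup H_1$ with $|H_0|=|H_1|=n/2$, where $H_1$ sits far above $H_0$ in the vertical direction, so that every line spanned by two points of one half is ``far'' from the other half in a controlled sense. Each $H_i$ is again a Horton set, which enables induction.

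For the upper bound $O(n^2)$ when $k \leq 7$, it is classical that Horton sets have $O(n^2)$ \affine $k$-holes for $k \leq 6$ and no \affine 7-hole; so the task reduces to bounding the number of empty double chain $k$-wedges. A double chain $k$-wedge is unbounded in both of its two ``arms'', and the two chains are separated by the line through the endpoints where the two chains meet. I would split the analysis by whether the wedge lies entirely inside $H_0$, entirely inside $H_1$, or has points in both halves. The first two cases feed into the induction and give $O((n/2)^2)$ each; in the ``crossing'' case, the vertical gap between $H_0$ and $H_1$ together with the Horton sparseness property should restrict, for each fixed ``cut pair'' spanning the two chains, the number of completions to at most $O(1)$, giving $O(n^2)$ in total.

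For the lower bound $\Omega(n^2)$, it is classical that Horton sets contain $\Omega(n^2)$ \affine $k$-holes for $k \in \{3,4,5\}$. For $k \in \{6,7\}$ I would construct $\Omega(n^2)$ empty double chain $k$-wedges directly: take one point from $H_0$ together with a short convex chain from $H_1$ (or vice versa) chosen near the vertical ``gap'' between the halves, so that the unbounded arms of the wedge escape outside the bounding box of $H$ and consequently avoid all points of $H$. Counting such configurations inductively should yield quadratically many.

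For the exact count of \projective 3-holes in the perfect Horton set of size $n = 2^z$, let $f(n)$ denote this number. Every triple determines exactly $4$ \projective 3-gons: the \affine triangle plus three ``opposite'' double chain 3-wedges (one per choice of which point plays the singleton chain). Writing $H = H_0 \cup H_1$, I would set up a recurrence by classifying each \projective 3-hole according to how its three vertices distribute between the two halves, and, for each distribution, for each of the four \projective 3-gons, determining exactly when emptiness in $H$ follows from emptiness in the appropriate half. This produces a recurrence of the shape
\[
f(2n) \;=\; 2\,f(n) + \alpha n^2 + \beta n z + \gamma n + \delta z + \varepsilon,
\]
with explicit constants computable by a finite case analysis of the Horton geometry (e.g.\ how many triples from $H_0$ have their ``upward opposite wedge'' emptied by the placement of $H_1$). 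Unwinding this recurrence with the base case $f(1)=f(2)=0$ yields the closed form $4.25\cdot 2^{2z} + 2^z(-3z^2/2 - z/2 - 5.5) - 4z + 2$. The main obstacle is the bookkeeping in the case analysis: one must track, for every mixed-halves triple and every one of its four \projective 3-gons, which subset of the other $n-3$ points of $H$ could obstruct it, and this interacts delicately with the recursive placement rule of the perfect Horton set.
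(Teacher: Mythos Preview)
Your overall framework (Observation~\ref{obs-holes} plus recursion on the two halves $H_0,H_1$) matches the paper, but two of your key steps have genuine gaps.

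\medskip
\textbf{Upper bound, crossing case.} Your claim that ``for each fixed cut pair \dots\ the number of completions is $O(1)$'' is not the mechanism that works, and you do not justify it. The paper's argument rests on a structural lemma you never state: a Horton set contains no open $4$-cap and no open $4$-cup, and in fact contains only $O(n)$ open caps and $O(n)$ open cups in total. Given an empty double chain $k$-wedge meeting both $H_0$ and $H_1$, a short case analysis (according to which of the four ``corner'' vertices lie in $H_0$ versus $H_1$) shows that the two chains break into at most two open caps/cups of $H$ or of $H_0$, $H_1$, together with at most one extremal point; hence $k\le 7$, and the number of crossing wedges is bounded by (open caps)$\times$(open cups)$=O(n)\cdot O(n)=O(n^2)$. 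Without the open-cap/cup lemma you have no handle on the crossing case.

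\medskip
\textbf{Exact count, the recurrence.} Your proposed recurrence $f(2n)=2f(n)+(\text{crossing polynomial})$ is structurally incorrect. It presumes that every \projective $3$-hole of $H_0$ (respectively $H_1$) remains a \projective $3$-hole of $H$, but this is false: a non-affine \projective hole of $H_0$ whose interior contains an unbounded vertical strip (the paper calls these \emph{non-skew}) is filled in by the points of $H_1$ sitting high above $H_0$. So the correct relation has the shape $f(2n)=2f(n)-(\text{non-skew holes of }H_0\text{ and }H_1)+(\text{crossing holes})$, and the subtracted term is not a simple polynomial in $n$ and~$z$. The paper therefore does not use a global recurrence at all. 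It computes three contributions separately and sums them: the affine $3$-holes (classical Horton-set count), the ``type~1a'' non-affine $3$-holes (one singleton chain in $U(X)$, a two-point open cap in $L(X)$, or symmetrically), and the ``type~1b'' non-affine $3$-holes, which require introducing and counting a new kind of pair---``open up\&down'' and ``open down\&up'' pairs---specific to the perfect Horton set. Each of the three counts is obtained by summing a per-layer formula over all layers of $PH(2^z)$, with the skew/non-skew distinction handled explicitly inside the type~1a and type~1b analyses. Your approach could in principle be salvaged by tracking the non-skew correction, but that correction is exactly where the delicate bookkeeping lives, and you have not indicated how to compute it.
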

	
	For positive integers $k \geq 3$ and $n$, let $h_k^p(n)$ be the minimum number of \projective $k$-holes in any set of $n$ points in $\RPP$ in general position.
	Theorem~\ref{theorem:horton_sets} gives
	$h^p_k(n) \leq O(n^2)$ for every $k \leq 7$ and Theorem~\ref{thm:holesExistence} gives
	$h^p_k(n) = 0$ for every $k > 7$.
	
	In contrast to the planar case, each sufficiently large Horton set in $\RPP$ contains a \projective $7$-hole.
	We do not have examples of large point sets in $\RPP$ without \projective 7-holes, thus it is natural to ask whether there are \projective 7-holes in every sufficiently large point set in~$\RPP$.
	We believe this to be the case; see Subsection~\ref{subsec:openProblems} for more open problems.
	
	We also prove that every set of at least 7 points in $\RPP$ contains a \projective 5-hole while there are sets of 6 points in $\RPP$ with no \projective 5-hole.
	Interestingly, every set of 5 points in $\RPP$ contains a \projective 5-hole.
	This is in contrast with the situation in the plane, where we have $h_k(n) \leq h_k(n+1)$ for every $k$ and $n$, which can be seen by removing a vertex of the convex hull of a set $S$ of $n+1$ points from $\mathbb{R}^2$ with $h_k(n+1)$ \affine $k$-holes.
	
	\begin{proposition}
		\label{proposition:projective5holes}
		Every set of at least 7 points in general position in $\RPP$ contains a \projective 5-hole. 
		Also,
		$h_5^{p}(5)=1$ and $h_5^{p}(6)=0$.
	\end{proposition}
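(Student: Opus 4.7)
I would prove the three claims separately.

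The equality $h_5^p(5)=1$ is immediate from property~(\ref{item-5gon}) stated just after Definition~\ref{def:projective-k-gon}: any five points in general position in $\RPP$ determine exactly one projective $5$-gon, and since the set contains no further points, that $5$-gon is automatically empty in its interior. Hence $h_5^p(5)=1$.

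For the equality $h_5^p(6)=0$, my plan is to exhibit an explicit six-point configuration $P\subseteq \RPP$ none of whose $5$-subsets is a $5$-hole. I would take six points that are not in projective convex position in any affine chart and verify, for each of the six $5$-subsets $Q\subseteq P$, that the unique projective $5$-gon of $Q$ (which exists by property~(\ref{item-5gon})) contains the sixth point $P\setminus Q$ in its interior. Each case is checked using Observation~\ref{obs-gons} to identify whether the $5$-gon appears as an affine pentagon or as a double chain $5$-wedge in the chosen affine chart, and then verifying the containment. A suitable symmetric configuration (or one obtained by a finite search over order types) should suffice; the construction is then followed by a short case-by-case verification of the six required containments.

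To prove that every seven-point set $P\subseteq\RPP$ in general position contains a projective $5$-hole, I would fix an affine chart $\rho$ containing $P$ and perform a case analysis on the size of the convex hull $H_1=\operatorname{conv}_\rho(P)$. If $|H_1|\geq 6$, then five consecutive vertices of $H_1$ form an affine pentagon, and a rotational choice among the six consecutive options ensures that the at most one interior point is avoided, giving an affine (hence projective) $5$-hole. If $|H_1|=5$, both interior points lie in the convex pentagon, so I would pass to a $5$-subset that contains at least one interior point and apply Observation~\ref{obs-gons} to describe its projective $5$-gon; after a projective transformation moving a suitable point to infinity, the configuration often acquires a larger affine convex hull and the earlier argument applies. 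The remaining cases $|H_1|\in\{3,4\}$ are handled analogously by projective transformations that increase the number of extreme points, combined with an explicit analysis of the possible positions of the interior points.

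The main obstacle lies in the extreme case $|H_1|=3$, where $P$ consists of a triangle and four interior points; here the proof has to rely on non-trivial projective $5$-gons (double chain $5$-wedges) rather than on affine pentagons, since $P$ may contain no affine $5$-hole in $\rho$. I would handle this case by an explicit classification of the relative positions of the four interior points inside the triangle and a direct verification that at least one empty projective $5$-gon can always be found, translating between the affine and projective descriptions via Observation~\ref{obs-holes}.
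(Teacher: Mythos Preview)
Your treatment of $h_5^p(5)=1$ is fine and matches the paper. For $h_5^p(6)=0$ you only promise a configuration; the paper actually derives it as a byproduct of the main argument, so no separate search is needed.

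Your plan for the seven-point statement has a concrete error in the case $|H_1|=6$. You claim that among the six affine pentagons formed by five consecutive hull vertices, at least one misses the single interior point $q$. This is false in general: the pentagon obtained by deleting the vertex $v_i$ fails to contain $q$ precisely when $q$ lies in the ear triangle $\triangle v_{i-1}v_iv_{i+1}$, so your claim is equivalent to saying that the six ear triangles cover the hexagon. They do not; for a (perturbed) regular hexagon the central region is disjoint from every ear triangle, and any $q$ placed there lies in all six pentagons. Hence there need not be any affine $5$-hole in this case, and your argument would have to produce a genuinely projective one, which you do not do. The remaining cases $|H_1|\in\{3,4,5\}$ are only sketched, and the promised ``explicit classification'' in the $|H_1|=3$ case is where the real work would lie.

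The paper's proof avoids the hull-size case analysis entirely and is considerably cleaner. It assumes $P$ has $|P|\ge 6$ and no projective $5$-hole, picks a projective $5$-gon $G=p_0\cdots p_4$ in $P$ with the \emph{minimum} number of interior points, and argues that $G$ has exactly one interior point $q$ (two interior points would give a $5$-gon with fewer). Then $q$ must avoid every ear triangle $\triangle p_ip_{i+1}p_{i+2}$, so each quadrilateral $q\,p_ip_{i+1}p_{i+2}$ is a convex $4$-hole. Since none of these $4$-holes can be extended to a projective $5$-hole, one obtains, for each $i$, four wedge-shaped forbidden regions for further points of $P$; the key computation (illustrated by a figure in the paper) is that the union of these regions, together with $G$, covers all of $\RPP$. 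Hence $|P|=6$, and the configuration $\{p_0,\dots,p_4,q\}$ just described is exactly the witness for $h_5^p(6)=0$. This minimality-plus-covering argument, adapted from Harborth's proof that ten planar points force an affine $5$-hole, is the idea you are missing.
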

	
	The proof of Proposition~\ref{proposition:projective5holes} can be found
	in Section~\ref{sec:projective5holes_proof}.
	The following theorem shows that for some point sets the number of holes is substantially larger in~$\RPP$ than in $\mathbb{R}^2$.
	
	\begin{theorem}\label{thm:construction}
		For every $k \in \{3,\dots,6\}$ and every positive integer $n$,
		there is a set $S_k(n)$ of $n$ points in general position in $\mathbb{R}^2\subset \RPP$ such that $S_k(n)$ has 
		$O(n^2)$ \affine $k$-holes in $\mathbb{R}^2$ and 
		$\Omega(n^{3-\frac{5}{3k}})$ \projective $k$-holes. 
		
		More generally,
		for every $k \in \{3,\dots,6\}$, every real number $\alpha\in[0,k-2]$, and each positive integer $n$,
		there is a set $S^\alpha_k(n)$ of $n$ points in general position in $\mathbb{R}^2\subset\RPP$ such that $S^\alpha_k(n)$ has 
		$O(n^{2+\alpha})$ \affine $k$-holes in $\mathbb{R}^2$ and 
		$\Omega(n^{2+\beta})$ \projective $k$-holes, 
		where
		\[
		\beta:=\begin{cases} 
			1-\frac{5}{3k} + \alpha\cdot\frac{k-1}k & \ \text{if }\  0\le \alpha \le \frac{2k-5}3,\\
			(1+\alpha)\frac{k-2}{k-1} & \ \text{if }\  \frac{2k-5}3 <\alpha \le k-2.
		\end{cases}
		\]
	\end{theorem}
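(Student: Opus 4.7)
My plan is to exploit the structural characterization given by Observation~\ref{obs-holes}: in an affine chart $\mathbb{R}^2 \subset \RPP$, every \projective $k$-hole in $P$ is either an \affine $k$-hole or an \emph{empty double-chain $k$-wedge}, that is, a double chain on $k$ points of $P$ whose wedge region contains no further point of $P$. The task thus reduces to building a point set with few empty convex $k$-gons in $\mathbb{R}^2$ but many empty double-chain wedges.

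The construction of $S_k^\alpha(n)$ I would use is hierarchical: fix a parameter $m = m(\alpha,k,n)$, place $m$ ``super-points'' in convex position along a convex curve $\gamma \subset \mathbb{R}^2$ (e.g.\ a long thin ellipse), and replace each super-point by a \emph{cluster} of $n/m$ points forming a Horton set, which by Theorem~\ref{theorem:horton_sets} contributes only quadratically many $k$-holes internally. The parameter $m$ interpolates between a single Horton cluster ($m=1$, few \affine holes but also few wedge holes) and the fully spread-out case ($m=n$, many candidates for inter-cluster wedges), and is tuned as a power of $n$ depending on $\alpha$.

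The first main step is the upper bound on \affine $k$-holes. Each such hole is either \emph{intra-cluster}, contributing $O(m \cdot (n/m)^2) = O(n^2/m)$ by Theorem~\ref{theorem:horton_sets}, or \emph{inter-cluster}; in the latter case the convex-position arrangement of the super-points on $\gamma$ forces the hole to use only clusters whose super-hull contains no further super-point, giving $O(n^k/m^{k-1})$ holes from near-consecutive cluster tuples. Choosing $m \sim n^{(k-2-\alpha)/(k-1)}$ balances the dominant term to $O(n^{2+\alpha})$. The second main step is the \projective count: placing the super-points on $\gamma$ so that certain pairs of clusters generate wedges escaping entirely to the outside of $\gamma$, I expect $\Omega(m^2)$ ``good'' cluster pairs, each contributing $\Omega((n/m)^k)$ empty double-chain wedges, for a total of $\Omega(n^k/m^{k-2}) = \Omega(n^{2+\beta})$ \projective $k$-holes; the regime transition at $\alpha = (2k-5)/3$ reflects the cross-over between the three terms $n^2/m$, $n^k/m^{k-1}$, and $n^k/m^{k-2}$ in the optimization for $m$.

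The main technical obstacle is the geometric verification that the claimed cluster pairs really do produce empty wedges. One must confirm both the double-chain separation conditions of Section~\ref{sec:prelim} for $k$-subsets split across two clusters, and that each resulting wedge region avoids every other cluster as well as every intra-cluster point not in the chain. This will force a careful choice of $\gamma$ and of the cluster placements, so that wedges from ``far-apart'' cluster pairs point safely outside the whole configuration and the boundary cases $a \in \{0,1,k-1,k\}$ for the split $a+b=k$ between the two clusters are handled cleanly. Once this bookkeeping is in place, the exponent $3 - \tfrac{5}{3k}$ for $\alpha = 0$, and the two-regime formula for $\beta$ in general, emerge directly from the optimization described above.
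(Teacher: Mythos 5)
The central gap is in your upper bound on inter-cluster \affine $k$-holes, and it is fatal to the construction as described. You claim that the convex-position arrangement of the $m$ super-points on $\gamma$ ``forces the hole to use only clusters whose super-hull contains no further super-point,'' yielding $O(n^k/m^{k-1})$ holes from near-consecutive tuples. But for points on a strictly convex curve this restriction is vacuous: the convex hull of \emph{every} $k$-subset of the super-points contains no other super-point in its interior, so all $\Theta(m^k)$ cluster $k$-tuples are candidates. Since the interior of $\gamma$ contains no points of your set, essentially every such tuple realizes at least one empty \affine $k$-gon (from each small cluster pick a point extreme towards the interior of the spanned polygon; the remaining cluster points and all unchosen clusters then lie outside the hull). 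This gives $\Omega(m^k)$ \affine $k$-holes, and with your choice $m\sim n^{(k-2-\alpha)/(k-1)}$ this is, e.g., $\Omega(n^{24/5})$ for $k=6$, $\alpha=0$, far above the required $O(n^{2})$. The paper avoids exactly this failure by \emph{filling the hull}: the bulk of the set is a random squared Horton set $H(\sqrt n\times\sqrt n)$ with $O(n^2)$ expected \affine holes, and the cluster base points are chosen in the lattice with all coordinates divisible by $3$, so that every triangle spanned by three base points contains a bulk point. Consequently every \affine hole meets at most two clusters, and the holes meeting one or two clusters are controlled by two expected-count lemmas ($O(n)$ holes through a fixed point, $O(1)$ through a fixed pair of hull points). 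Without an interior filler of this kind your upper bound cannot be repaired.

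Two further symptoms confirm the problem. First, your three-term optimization yields $\beta=(1+\alpha)\frac{k-2}{k-1}$ in \emph{both} regimes, which for $0\le\alpha<\frac{2k-5}{3}$ is strictly larger than the theorem's value $1-\frac{5}{3k}+\alpha\frac{k-1}{k}$ (the two agree only at $\alpha=\frac{2k-5}{3}$); the theorem's first-regime formula arises from the cap $a=\Theta(n^{1/3})$, the maximum size of a convex-position subset of the $\sqrt n\times\sqrt n$ lattice, a constraint your curve-based construction does not see --- a red flag that your counts are too good to be true. Second, your \projective lower bound via $\Omega(m^2)$ ``good'' cluster pairs, each giving $\Omega((n/m)^k)$ empty wedges, is precisely the step you defer, and it is delicate: for far-apart pairs the double-chain separation conditions and emptiness of the wedge against intermediate clusters require cluster orientations compatible with every partner simultaneously. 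The paper sidesteps this entirely with a one-cluster device: each cluster $S_c$ consists of $b$ nearly collinear points on a circle tangent to $\conv C_3^H$ at $c$ (with the tangent line avoiding the other hull points), so that \emph{any} $k-1$ points of $S_c$ together with \emph{any} single point of $T\setminus S_c$ form an empty double-chain wedge with chains of sizes $k-1$ and $1$. This immediately gives $\Theta(ab^{k-1}n)$ \projective $k$-holes with no pairwise-cluster analysis, and note that under $ab\le n$ one has $a^2b^k\le n\cdot ab^{k-1}$, so the pair-based count would in any case not beat it.
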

	
	The following result shows a significant difference between the number of holes of all sizes in the plane and in the real projective plane.
	
	\begin{theorem}\label{thm:construction2}
		For any two positive integers $n$ and $x$ with $x\le 2^{n/2}$, there is a set $S(n,x)$ of $n$ points in general position in $\mathbb{R}^2\subset\RPP$ containing at most $O(x+n^2)$ \affine holes in $\mathbb{R}^2$ and at least $\Omega(x^2)$ \projective holes. 
	\end{theorem}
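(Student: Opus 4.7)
Set $m := \lceil \log_2 x \rceil$, so that $x \le 2^m$ and, since $x \le 2^{n/2}$, also $2m \le n$. The plan is to take $S(n,x) := H \cup D$, where $H$ is a Horton set on $n - 2m$ points contained in a bounded region $R \subset \mathbb{R}^2$ centered at the origin, and $D = D_L \cup D_R$ consists of $m$ points placed on each of the two branches of a hyperbola in $\mathbb{R}^2$, positioned so that $D_L$ and $D_R$ lie far outside $R$ on opposite sides and so that $R$ sits in the central component of $\mathbb{R}^2$ between the two branches. Since the hyperbola is a projective conic in $\RPP$, the set $D$ lies in \projective convex position.

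For the \projective holes, consider any subset $S = S_L \cup S_R$ of $D$ with $S_L \subseteq D_L$, $S_R \subseteq D_R$ both nonempty and $|S| \ge 5$. Then $S$ is in \projective convex position (being on a conic) but not in affine convex position in $\mathbb{R}^2$: the affine convex hull of $S$ is the quadrilateral spanned by the two extreme points on each branch, with the remaining points of $S$ in its interior. By Observations~\ref{obs-gons} and~\ref{obs-holes}, the unique \projective $|S|$-gon therefore manifests in $\mathbb{R}^2$ as a double chain wedge, lying on the projectively-inside side of the conic. With $R$ placed in the projectively-outside (central) region, the wedge is disjoint from $H$; and each omitted point of $D \setminus S$ lies on the conic, hence on the boundary of the \projective polygon, and is separated from the interior of the wedge by a chord of $S$ thanks to the convexity of each branch. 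Hence the wedge is empty in $S(n,x)$, so $S$ is a \projective hole. There are at least $(2^m-1)^2 - O(m^4) \ge x^2/4$ such mixed subsets, yielding $\Omega(x^2)$ \projective holes.

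For the \affine holes, I decompose them into three types: those contained in $H$, those contained in $D$, and those using points from both. The first is $O((n-2m)^2)=O(n^2)$ by Theorems~\ref{theorem:horton_sets} and~\ref{thm:holesExistence}. The second is $O(x)$, since mixed subsets of $D$ of size $\ge 5$ are not in affine convex position, leaving only the at most $2\cdot 2^m - O(m) = O(x)$ single-branch subsets as candidates for \affine polygons, plus $O(m^4)$ small mixed subsets contributing a bounded number of \affine holes each. The third, mixed type is the delicate case: because $D_L$ and $D_R$ lie far outside $R$ on opposite sides, any convex polygon whose vertex set spans both $H$ and $D$ is elongated and crosses $R$; by placing $H$ as a sufficiently dense Horton cluster inside $R$ and using the recursive combinatorial structure of Horton sets, one forces every such polygon to contain an $H$-point not on its vertex set, so it is not empty. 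A quantitative version of this argument bounds the mixed contribution by $O(n^2 + x)$, giving the total $O(x + n^2)$. The main technical obstacle is this last bound, handled by combining the sparsity of empty convex polygons in Horton sets with the opposite-side placement of $D_L, D_R$ at projective infinity.
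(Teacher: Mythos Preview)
Your proposal has a genuine gap in the treatment of the ``mixed'' affine holes, and this is precisely the step where the paper's construction differs from yours in an essential way.

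You write that ``by placing $H$ as a sufficiently dense Horton cluster inside $R$ \ldots\ one forces every such polygon to contain an $H$-point not on its vertex set, so it is not empty'', and then defer to an unspecified ``quantitative version'' to get the $O(n^2+x)$ bound. The qualitative claim is simply false: for any $d\in D_L$, the triangle formed by $d$ and two angularly consecutive points of $H$ (as seen from $d$) is an empty affine triangle with vertices in both $H$ and $D$, so mixed affine holes certainly exist. What you would actually need is to bound, for each $d\in D$ (and each pair $d,d'$), the number of affine holes of $H\cup D$ through $d$ (respectively through $d,d'$). With $D$ placed ``far away'' and unrelated to the combinatorics of $H$, there is no evident mechanism to do this; you have not supplied one, and nothing in the Horton-set toolbox directly controls holes through an external far-away vertex.

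The paper avoids this difficulty by a different placement of the extra points. It works inside a \emph{random squared Horton set} $H(\sqrt n\times\sqrt n)$, selects $a=2$ special vertices $c$ on its convex hull, and places each cluster $S_c$ of $b\approx\log_2 x$ points in a tiny neighbourhood of $c$ on a circle tangent to $\conv C_3^H$ at~$c$. The crucial property (P1 in the appendix) is that replacing $c$ by any point of $S_c$ does not change the order type of~$H$; hence every mixed affine hole of $T=S\cup H$ corresponds, after collapsing $S_c$ back to $c$, to an affine hole of $H$ containing~$c$. The paper then invokes two lemmas (expected $O(n)$ holes of $H$ through a fixed point, expected $O(1)$ holes through a fixed pair) to bound the mixed contribution by $O(abn)+O((ab)^2)=O(n^2)$. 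Your hyperbola points have no such proximity relation to $H$, so this reduction is unavailable.

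A smaller point: in your projective-holes argument you say the omitted points of $D\setminus S$ ``lie on the conic, hence on the boundary of the projective polygon''. They lie on the conic, but the boundary of the projective $|S|$-gon consists of the chords of $S$, not the conic arc; the omitted points lie strictly \emph{outside} that polygon (which is what you want), not on its boundary. This is easily fixed, and your lower-bound count $\Omega(x^2)$ is correct in spirit. The upper bound on affine holes, however, is where the real work lies, and your sketch does not do it.
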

	
	In general, we can show that every set $P$ of $n$ points from $\mathbb{R}^2 \subset \RPP$ contains at least quadratically many
	\projective holes which are not \affine holes in~$\mathbb{R}^2$.
	
	\begin{proposition}
		\label{proposition:manyprojective34holes}
		Let $P$ be a set of $n$ points in 
		$\mathbb{R}^2  \subset \RPP$ in general position,
		and let $h_k^p(P)$
		be the number of \projective $k$-holes in~$P$.
		Then,
		\[
		h_3^p(P) 
		\ge h_3(P) + \frac{1}{3} \binom{n}{2} \;\;\;\;\;\text{ and }\;\;\;\;\;
		h_4^p(P) 
		\ge h_4(P) + \frac{1}{2} \left(\binom{n}{2}-3n+3\right),
		\]
		where 
		$h_k(P)$
		is the number of \affine $k$-holes in $P$ in the plane $\mathbb{R}^2$.
	\end{proposition}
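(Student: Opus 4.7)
The plan is to apply Observation~\ref{obs-holes} to reduce both inequalities to counts of \emph{empty double chain $k$-wedges} in $P$: a projective $k$-hole in $P$ is either an affine $k$-hole or (in any affine plane containing it) a double chain $k$-wedge that is empty in $P$. Hence $h_k^p(P)-h_k(P)$ equals the number of empty double chain $k$-wedges in $P$ for $k\in\{3,4\}$.

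For the 3-hole bound, each triple $\{a,b,c\}\subset P$ determines exactly three double chain 3-wedges by property~(\ref{item-3gon}), one for each choice of ``singleton'' vertex. The wedge $D(a;\{b,c\})$ with singleton $a$ and base $\{b,c\}$ consists of two affine components: an \emph{edge region} across $\overline{bc}$ opposite to $a$ (bounded by $\overline{bc}$ and the rays from $b,c$ in the directions $b-a$, $c-a$) and a \emph{vertex region opposite $a$} (the wedge at $a$ bounded by the rays from $a$ in directions $a-b$, $a-c$). The wedge is empty iff both components avoid $P\setminus\{a,b,c\}$. I would first note two easy sufficient conditions: if $\{b,c\}$ is a convex hull edge of $P$, the edge region (being outside the hull) is empty; if $a$ is a convex hull vertex of $P$, the vertex region opposite $a$ is empty by a supporting-line argument, since a supporting line at $a$ with outward normal $d$ satisfying $\langle x-a,d\rangle<0$ for all $x\in P\setminus\{a\}$ forces any $x=a+t_1(a-b)+t_2(a-c)$ (with $t_1,t_2\ge 0$ not both zero) to have $\langle x-a,d\rangle>0$, a contradiction.

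These conditions immediately yield $h(h-2)$ empty double chain 3-wedges, where $h$ is the hull size of $P$; this is sufficient only when $h=\Theta(n)$. The key step is to lift the hull argument to a pair-wise count by applying the same reasoning to subsets: for each pair $\{b,c\}\subset P$, consider the two convex hulls of $\{b,c\}\cup (P\cap H^+_{\overline{bc}})$ and $\{b,c\}\cup(P\cap H^-_{\overline{bc}})$; extremal vertices of these one-sided hulls serve as witnessing singletons for empty wedges $D(\cdot;\{b,c\})$. Summed over all $\binom{n}{2}$ pairs and amortized carefully, these contributions give the target $\frac{1}{3}\binom{n}{2}$.

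The 4-hole bound is established by the same strategy using property~(\ref{item-4gon}) that each 4-subset determines three projective 4-gons; the non-affine ones are empty double chain 4-wedges of interest. One formulates analogous conditions for 4-wedges (using the chain structure of the $1+3$ or $2+2$ split) and applies the same hull-based pair-count, producing the bound $\frac{1}{2}(\binom{n}{2}-3n+3)$. The subtracted $-3n+3$ accounts for degenerate small configurations that must be excluded from the main count. I expect the amortization step---obtaining the precise constants $\frac{1}{3}$ and $\frac{1}{2}$, rather than weaker $\Omega(n^2)$ bounds---to be the main technical obstacle, with the existence of individual empty wedges from hull-extremality being straightforward.
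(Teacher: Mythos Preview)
Your proposal has genuine gaps, and the paper's argument is quite different from what you sketch.

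\textbf{The 3-hole argument.} You correctly reduce to counting empty double chain $3$-wedges, but your ``lift'' via one-sided hulls does not work as stated. For a pair $\{b,c\}$ and a vertex $a$ on the hull of $\{b,c\}\cup(P\cap H^+_{\overline{bc}})$, the vertex region at $a$ can still contain points of $P\cap H^-_{\overline{bc}}$, and the edge region (which lies in $H^-$) is not controlled at all. So you have not shown that \emph{every} pair $\{b,c\}$ contributes at least one empty wedge, and your ``amortized carefully'' step is precisely the missing content. The paper avoids this by a one-line argument: for each pair $\{p,q\}$, take the far projective segment $s=\overline{pq}-pq$, pass to an affine chart $\rho_s\subset\RPP$ containing $s$, and pick the point of $P$ nearest to $s$ in $\rho_s$. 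This triple is an affine $3$-hole in $\rho_s$, hence a projective $3$-hole that is not an affine hole in $\mathbb{R}^2$ (since it uses the far segment $s$). Each such $3$-hole has three sides, so is counted at most three times, giving the constant $\tfrac13$.

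\textbf{The 4-hole argument.} Your sketch (``analogous conditions'', ``the subtracted $-3n+3$ accounts for degenerate small configurations'') is not a proof, and the interpretation of $-3n+3$ is incorrect. In the paper, the segments in $S=\{\overline{pq}-pq:\{p,q\}\subset P\}$ that are \emph{not} crossed by any other segment of $S$ form a crossing-free drawing of a graph on $n$ vertices in $\RPP$, so by Euler's formula there are at most $3n-3$ of them; hence at least $\binom{n}{2}-(3n-3)$ segments of $S$ are crossed. For each crossed $s'\in S$, pick a crossing $t'\in S$ and a chart $\rho_{s',t'}$ containing both; then a result of B\'ar\'any and F\"uredi guarantees that $s'$ is a diagonal of an affine $4$-hole in $\rho_{s',t'}$, hence of a projective $4$-hole that is not affine in $\mathbb{R}^2$. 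Each such $4$-hole has two diagonals, so is counted at most twice, yielding the factor $\tfrac12$.
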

	
	The proof of Proposition~\ref{proposition:manyprojective34holes} 
	is in Section~\ref{sec:more_3holes_and_4holes_proof}.
	Together with the best known lower bounds on $h_3(n)$ and $h_4(n)$ by Aichholzer et al.~\cite{ABHKPSVV2020_JCTA}, the estimates from Proposition~\ref{proposition:manyprojective34holes} give
	\[
	h_3^p(n) 
	\ge \frac{7}{6}n^2 + \Omega(n \log^{2/3} n) \;\;\;\;\;\text{ and }\;\;\;\;\;
	h_4^p(n) 
	\ge \frac{3}{2}n^2 + \Omega(n \log^{3/4} n).
	\]

	We also discuss random point sets in the real projective plane and provide the following analogue to results for random point sets in the plane~\cite{BaranyFueredi1987,Valtr1995a}.
	This gives an alternative proof of the upper bound $h_3^p(n) \leq O(n^2)$.
	The proof of Theorem~\ref{theorem:random_sets} is given
	in Section~\ref{sec:random_sets}.
	
	\begin{theorem}
		\label{theorem:random_sets}
		Let $K$ be a compact convex subset in $\mathbb{R}^2$ of unit area.
		If $P$ is a set of $n$ points chosen uniformly and independently at random from $K \subset \mathbb{R}^2 \subset \RPP$,
		then the expected number of \projective 3-holes in $P$ is in $\Theta(n^2)$. 
		Moreover, the expected number of \projective holes in~$P$, which are not \affine holes in $\mathbb{R}^2$, is in $\Theta(n^2)$.
	\end{theorem}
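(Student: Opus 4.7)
The plan is to combine the classical B\'ar\'any--F\"uredi integral-geometric estimate for empty triangles in random point sets with Proposition~\ref{proposition:manyprojective34holes} and Observation~\ref{obs-holes}. For the lower bound of the first statement, I use that the expected number of \affine $3$-holes in $n$ uniform random points of a unit-area convex body is $\Theta(n^2)$ by B\'ar\'any and F\"uredi~\cite{BaranyFueredi1987}; every such \affine $3$-hole is also a \projective $3$-hole by Observation~\ref{obs-holes}, which yields $\Omega(n^2)$. For the lower bound of the second statement, I invoke the deterministic inequality $h_3^p(P) - h_3(P) \geq \frac{1}{3}\binom{n}{2}$ of Proposition~\ref{proposition:manyprojective34holes}: taking expectations immediately gives $\Omega(n^2)$ \projective holes (already among $3$-holes) that are not \affine holes.

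For the upper bound of the first statement, I split $h_3^p(P) = h_3(P) + W_3(P)$, where $W_3(P)$ counts \projective $3$-holes that are not \affine. The first summand has expectation $O(n^2)$ by B\'ar\'any--F\"uredi. Because every triple $\{a,b,c\}$ determines exactly three wedge $3$-gons, one at each vertex, linearity of expectation expresses the expected number of wedge $3$-holes as
\begin{equation*}
3\binom{n}{3}\,\mathbb{E}_{(a,b,c)\in K^3}\bigl[(1 - \lvert W\cap K\rvert)^{n-3}\bigr],
\end{equation*}
where $W$ is the wedge region at a fixed vertex. Since $K$ is convex, $\triangle abc\subseteq K$, and so the four \projective $3$-gons determined by $a,b,c$ partition $K$ into pieces of total area $1$. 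A standard density-at-zero argument then shows that, for random $(a,b,c)\in K^3$, the density of $\lvert W\cap K\rvert$ at $0$ is bounded by a constant depending only on $K$, so the same integral-geometric estimate B\'ar\'any--F\"uredi use for the affine triangle applies verbatim to each wedge region, giving an inner expectation of $O(1/n)$ and hence the $O(n^2)$ upper bound.

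For the upper bound of the second statement, I need $\mathbb{E}\bigl[\sum_{k\ge 3}\#\{\text{wedge }k\text{-holes in }P\}\bigr] = O(n^2)$. Using that a $k$-tuple determines at most $4$ \projective $k$-gons for $k=3$, at most $3$ for $k=4$, and at most one for $k\ge 5$, I apply the integral argument above to each $k$ separately and then sum. For random points in a convex body, the contribution of large $k$ decays exponentially, because $k$ points admitting a double-chain configuration with empty wedge is exponentially unlikely as $k$ grows (in parallel with the rapid decay of \affine $k$-holes); the series therefore converges to $O(n^2)$. The main obstacle is establishing the density-at-zero bound for wedge areas uniformly in~$k$. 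Since wedges are unbounded in $\mathbb{R}^2$ and meet the line at infinity, I would reduce this to the \affine case via a projective transformation sending a suitable chord of the wedge to the line at infinity, turning a wedge $k$-hole into an \affine $k$-hole of a transformed point set, to which the classical integral-geometric techniques apply directly.
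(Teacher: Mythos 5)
Your lower bounds are correct. For the first statement you use, as the paper does, that affine $3$-holes are projective $3$-holes and invoke the classical $\Omega(n^2)$ bound. For the second statement you take a genuinely different route: the paper proves $\Omega(n^2)$ empty double chain $3$-wedges probabilistically, fixing an apex $p$, taking angularly consecutive pairs $p_i,p_{i+1}$, and running a three-case analysis on the opposite region $R$ (Case~3 having probability at most $1/2$ for $n\ge5$), whereas you invoke the deterministic inequality of Proposition~\ref{proposition:manyprojective34holes}, which holds for every realization and hence in expectation. That is valid --- the extra $\frac13\binom{n}{2}$ projective $3$-holes in that proposition are exactly non-affine ones, and there is no circularity since the proposition is proved independently --- and it is arguably cleaner and stronger than the paper's argument at this point.

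Your upper bound for projective $3$-holes also differs from the paper's, which never computes area densities: it orders $S\setminus\{p\}$ angularly around the apex $p$ and proves (Lemma~\ref{lemma:event_Eij}) that $\Pr[E_{i,j}]\le 4/(k+1)^2$ with $k=\min\{j-i,\,n-1-(j-i)\}$, because emptiness of the wedge forces the at least $k-1$ points angularly between the rays $\overrightarrow{pp_i}$ and $\overrightarrow{pp_j}$ into the triangle $pp_ip_j$; a rightmost-point exchangeability argument plus one integral then gives $O(n)$ empty $3$-wedges per apex. Your formula $3\binom{n}{3}\,\mathbb{E}[(1-|W\cap K|)^{n-3}]$ is correct, but the crux --- that $\Pr[\mathrm{area}(W\cap K)\le t]=O(t)$ --- is precisely what you leave as ``standard''. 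It does not follow from the observation that the four projective $3$-gons partition $K$ into pieces of total area $1$, and the triangle-case density bound (base times height, integrating $1/|ab|$) has no verbatim analogue for the wedge. The claim is in fact true: both pieces of $W$ subtend the angle $\angle bac$ as seen from $a$, this angle has bounded density at $0$, and the degenerate regimes (apex near $\partial K$ with outward cone, chord $bc$ cutting a shallow cap) contribute lower-order terms; but this must be carried out, and the paper's combinatorial route avoids it entirely.

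The genuine gap is your upper bound for the second statement. First, the exponential decay in $k$ of empty double chain $k$-wedges is asserted, not proved, and it does not transfer ``in parallel'' from affine $k$-holes, since wedges are unbounded regions whose emptiness constrains a different part of $K$. Second, your proposed repair --- a projective transformation sending a chord of the wedge to the line at infinity, so that classical integral-geometric techniques ``apply directly'' --- is unsound: such a map does not preserve the uniform distribution (the push-forward density acquires a Jacobian blowing up near the line sent to infinity, and the image of $K$ is unbounded), and the transformation depends on the random configuration, so it cannot be fixed in advance or conditioned away. You would need an honest wedge analogue of the density or charging estimates, uniform in $k$ and with summable constants. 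It is worth noting that the paper's printed proof does not attempt this either: it reduces the theorem via Observation~\ref{obs-holes} to empty double chain $3$-wedges and explicitly bounds only those, so for $k\ge4$ wedges you are trying to fill in a step the paper leaves implicit --- a reasonable ambition, but your specific mechanism for it fails.
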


	Last but not least, we discuss the computational complexity 
	of determining the number of $k$-gons and $k$-holes in a given point set.
	Mitchell et al.~\cite{MitchellRSW1995} gave an $O(m n^3)$ time algorithm to compute, for all $k=3,\ldots,m$, the number of $k$-gons and $k$-holes in a given set $S$ of $n$ points in the Euclidean plane.
	Their algorithm also counts $k$-islands in $O(k^2 n^4)$ time.
	Here, an \emph{(\affine) $k$-island} 
	in a finite point set $S$ in the plane in general position is 
	the convex hull of a $k$-tuple $I$ of points from $S$
	that does not contain any point from $S \setminus I$.
	Note that 
	a convex set in~$\mathbb{R}^2$ 
	is a $k$-hole in~$S$ if and only if it is a $k$-gon and a $k$-island in~$S$.
	
	Here, we consider the algorithmic aspects of the analogous problems in the real projective plane.
	By modifying the algorithm by Mitchell et al.~\cite{MitchellRSW1995}, we can efficiently
	compute the number of \projective $k$-gons, $k$-holes, and $k$-islands of a finite set in the real projective plane.
	Here, a \emph{\projective $k$-island} in a finite set $P$ of points from $\RPP$ in general position is 
	a \projective convex hull of a $k$-tuple $I$ of points from $P$
	that does not contain any point from $P \setminus I$.
	Note that, 
	similarly as in the affine case, 
	a convex set in~$\RPP$ 
	is a \projective $k$-hole in~$P$ if and only if it is a \projective $k$-gon and a \projective $k$-island in~$P$.
	
	\begin{theorem}
		\label{thm:efficient_counting}
		Let $P$ be a set of $n$ points in $\mathbb{R}^2 \subset \RPP$ in general position.
		Assuming a RAM model of computation
		which can perform arithmetic operations 
		on integers in constant time,
		we can compute the total number of \projective $k$-gons and $k$-holes in~$P$ 
		for $k=3,\ldots,m$ in $O(m n^4)$ time and $O(m n^2)$ space.
		The number of \projective $k$-islands in~$P$ for $k=3,\ldots,m$ can be computed
		in $O(m^2 n^5)$ time and $O(m^2 n^3)$ space.
	\end{theorem}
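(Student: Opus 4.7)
The plan is to reduce each projective count to its affine counterpart (handled by the algorithms of Mitchell et al.~\cite{MitchellRSW1995}) plus an additional count of \emph{double chain $k$-wedges} in $P$, which are the only ``genuinely projective'' shapes that appear. Indeed, by Observations~\ref{obs-gons} and~\ref{obs-holes}, every \projective $k$-gon in $P$ is either an \affine $k$-gon in $\mathbb{R}^2$ or a double chain $k$-wedge, and every \projective $k$-hole is either an \affine $k$-hole or an \emph{empty} double chain $k$-wedge; an analogous decomposition holds for \projective $k$-islands, whose projective convex hulls are either convex polygons or double chain wedges. A direct application of the $O(mn^3)$-time algorithm from~\cite{MitchellRSW1995} therefore takes care of the affine $k$-gon and $k$-hole counts for all $k=3,\ldots,m$ within the claimed resources, and their $O(k^2 n^4)$-time $k$-island procedure handles the affine $k$-islands analogously.

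To count the double chain $k$-wedges, I would iterate over a distinguished bridge edge. Each double chain $k$-wedge has two bridges, namely $\overline{s_m s_{m+1}}$ and $\overline{s_k s_1}$, so fixing an ordered pair $(p,q)\in P\times P$ as the candidate bridge counts each wedge exactly twice. The directed line $\overline{pq}$ partitions $P \setminus \{p,q\}$ into two half-planes $P^+$ and $P^-$; a Mitchell-style dynamic program then grows the chain $A \ni p$ inside $P^+$ and the chain $B \ni q$ inside $P^-$ vertex by vertex, tracking the current last vertex of each chain together with the total number of chain vertices chosen so far, until the two chains close up at a second bridge edge. At each step, the double chain separation condition is enforced by testing that the newly added chain edge has the already-fixed vertices of the opposite chain on a prescribed side. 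This DP runs in $O(mn^2)$ time per pair $(p,q)$, giving $O(mn^4)$ total time and $O(mn^2)$ space after halving to remove the double-count. To count only empty wedges (i.e.\ \projective $k$-holes), it suffices to precompute, for each triangular ``slice'' added by a new chain edge, whether it contains any other point of $P$, which incurs no asymptotic overhead. For \projective $k$-islands the same DP is extended by an extra counter tracking how many of the chosen points lie in the interior of the growing wedge (rather than on its boundary), contributing the extra factor of $m \cdot n$ that matches the announced $O(m^2 n^5)$ time and $O(m^2 n^3)$ space.

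The main technical obstacle is enforcing the double chain separation property in the DP with only a local state. When a new vertex $s_{i+1}$ is appended to one chain, the edge $\overline{s_i s_{i+1}}$ must have \emph{all} vertices of the opposite chain on a prescribed side; but the opposite chain is still being built, so this is a non-local condition coupling the two sub-problems. The idea is to precompute, for every directed edge on a pair of points of $P$, the ``admissible strip'' in which the opposite chain may reside, and to fold this information into the DP transitions so that separation is maintained incrementally without blowing up the state space beyond $O(mn)$, preserving the per-pair $O(mn^2)$ bound.
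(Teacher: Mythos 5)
Your decomposition of projective counts into affine counts plus double chain $k$-wedge counts is sound (it is exactly Observations~\ref{obs-gons} and~\ref{obs-holes}), and delegating the affine part to Mitchell et al.~\cite{MitchellRSW1995} is fine. The genuine gap is in the heart of your algorithm, the two-chain dynamic program over a fixed bridge edge, and it sits precisely at the point you flag yourself: the double chain condition requires \emph{every} chain edge $\overline{s_is_{i+1}}$ to separate the two chains, so a transition appending an edge to chain $A$ is constrained by vertices of chain $B$ that have not been placed yet. Your proposed remedy (``precompute the admissible strip \ldots and fold this information into the DP transitions'') is an assertion, not a construction: with state (last vertex of $A$, last vertex of $B$, number of vertices so far) one can check a new $A$-edge against the already-placed prefix of $B$, but nothing in that state prevents a later $B$-vertex from violating a previously committed $A$-edge, and you give no argument that a state of size $O(mn)$ can certify this, nor a processing order under which the constraint becomes local. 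Two further details are also unproven or wrong as stated: the multiplicity claim that ordered bridge pairs count each wedge exactly twice needs a convention fixing which chain contains $p$ (each wedge has two bridges, each with two orientations), and the emptiness test via ``triangular slices'' does not apply, since a double chain $k$-wedge is unbounded in $\mathbb{R}^2$, so the regions swept by new chain edges are unbounded as well.

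For comparison, the paper dissolves this difficulty instead of confronting it: for each pair $\{s,t\}\subseteq P$ it removes from $\RPP$ a line parallel and very close to $\overline{st}$, on either side, obtaining two affine planes $\rho_{s,t}^{+}$ and $\rho_{s,t}^{-}$ in which every \projective $k$-gon (hole, island) having $s$ and $t$ as consecutive boundary vertices becomes an ordinary convex polygon with $s$ and $t$ on the hull and, after rotation, $s$ bottom-most. The algorithm of~\cite{MitchellRSW1995} then applies essentially verbatim, with only its initialization restricted to $3$-gons of the form $s,t,p_3$; each projective object is counted exactly $k$ times, once per boundary edge, and the per-pair costs of $O(mn^2)$ (gons/holes) and $O(m^2n^3)$ (islands) over $\binom{n}{2}$ pairs give the stated bounds. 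The re-embedding is exactly what converts your non-local separation constraints into plain convexity, which the existing dynamic program already enforces locally; if you wish to keep your wedge/affine decomposition, you could recover the wedge counts by running such a per-pair computation and subtracting the affine counts, but as written your wedge DP does not constitute a proof.
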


	\section{Discussion}
	\label{subsec:openProblems}
	
	The study of extremal questions about finite point sets in $\RPP$ suggests a wealth of interesting open problems and topics one can consider. 
	Here, we draw attention to some of them.
	
	By Theorem~\ref{thm:holesExistence}, there are arbitrarily large finite point sets in $\RPP$ that avoid $k$-holes for any $k \geq 8$.
	On the other hand, the result by Gerken~\cite{Gerken2008} and Nicolas~\cite{Nicolas2007} implies that every sufficiently large finite subset of $\RPP$ contains a \projective $k$-hole for any $k \leq 6$, as an analogous statement is true already in the affine setting.
	The existence of \projective 7-holes in sufficiently large finite subsets of $\RPP$ remains an intriguing open problem and we believe that \projective 7-holes can be always found in large points sets in $\RPP$.
	
	\begin{conjecture}
		\label{problem:7holes}
		Every sufficiently large point set in $\RPP$ contains a \projective $7$-hole.
	\end{conjecture}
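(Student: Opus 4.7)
The plan is to leverage the Gerken--Nicolas theorem guaranteeing \affine 6-holes in any sufficiently large planar point set, and then exploit the extra flexibility afforded in the \projective setting by double chain wedges. Given a sufficiently large set $P \subset \RPP$ in general position, I would first fix an arbitrary projective line $\ell_0$ avoiding $P$, obtaining an affine view $\rho_0 \subset \RPP$. By Gerken--Nicolas, $P$ contains an \affine 6-hole $H = \{v_1, \ldots, v_6\}$ in $\rho_0$. The goal is then to produce a seventh point of $P$ that, together with $H$ or with a suitable subset of it, yields a \projective 7-hole.

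To carry this out, I would analyze the six unbounded wedge regions obtained by extending the edges of $\conv(H)$ in $\rho_0$, and classify the points of $P \setminus H$ lying in each region. By Observation~\ref{obs-holes}, a \projective 7-hole in $P$ corresponds, in $\rho_0$, either to an \affine 7-hole or to an empty double chain 7-wedge. Either a point of $P \setminus H$ lies far enough inside one of the six wedges that, together with five consecutive vertices of $H$, it spans an \affine 7-hole; or the points obstructing such an extension must cluster near the edges of $H$ in a very constrained way. In the latter case, I would switch to a different affine view -- for instance by sending one of the edges of $H$ to infinity through a suitably chosen line $\ell_1$ -- in which the obstructing cluster, together with an appropriate subset of $H$, realizes an empty double chain 7-wedge. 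Note that this second possibility is precisely what makes Horton sets in $\RPP$ contain \projective 7-holes (Theorem~\ref{theorem:horton_sets}), even though they are explicitly designed to avoid \affine 7-holes, so the mechanism is at least consistent with known examples.

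The main obstacle, which I expect to be severe, is the combinatorial casework. Even in the affine plane, the existence proofs for 6-holes by Gerken and Nicolas are notoriously long and delicate. In the \projective setting, one gains the power of double chain wedges but must simultaneously track information across several affine views, so the number of cases grows accordingly. A realistic intermediate target is to settle the conjecture quantitatively for small $n$ via SAT-solver-based enumeration, in the spirit of the computer-aided determination of $ES^p(6) = 9$ mentioned in the excerpt; an explicit upper bound on the size of \projective 7-hole-free sets would both confirm the conjecture and guide a structural proof. A complementary attack is to bound the number of \projective 7-gons in large sets from below: if one can show that every sufficiently large $P$ determines super-polynomially many \projective 7-gons (by analogy with the estimates for \projective 3- and 4-holes in Proposition~\ref{proposition:manyprojective34holes}), then a pigeonhole argument combined with a bound on the number of points that can obstruct each 7-gon would force at least one of them to be empty.
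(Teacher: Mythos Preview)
The statement you are attempting to prove is posed in the paper as an \emph{open conjecture}; the paper does not contain a proof. There is therefore nothing to compare your argument against, and the relevant question is whether your proposal actually closes the problem. It does not: what you have written is a research outline with explicitly acknowledged gaps, not a proof.

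The central gap is the dichotomy you assert after locating an \affine $6$-hole $H$ via Gerken--Nicolas. You claim that either some point of $P\setminus H$ extends five vertices of $H$ to an \affine $7$-hole, or else the obstructing points ``cluster near the edges of $H$ in a very constrained way'' and can be reorganized into an empty double chain $7$-wedge after changing the affine chart. Neither alternative is justified. In general the points obstructing each of the six wedge extensions need not lie near any edge of $H$, need not be few in number, and need not be positioned so that any change of chart makes five of them align into a chain that is simultaneously empty. That Horton sets happen to admit \projective $7$-holes (Theorem~\ref{theorem:horton_sets}) only shows the mechanism is \emph{consistent} with one highly structured family; it does not supply the structural lemma you would need for arbitrary~$P$. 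The Gerken and Nicolas proofs already require a delicate analysis of nested gons to force a single empty hexagon; your plan asks for a comparable analysis one level higher, with the added freedom of choosing the line at infinity, but you give no indication of how to control the resulting case explosion.

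Your two fallback suggestions are also not proofs. A SAT-based enumeration for small $n$ would at best give numerical evidence or a finite bound, not the conjecture as stated; and the pigeonhole idea at the end fails quantitatively, since an $n$-point set determines only $O(n^7)$ \projective $7$-gons while a single interior point can obstruct $\Theta(n^6)$ of them, so a polynomial lower bound on the number of $7$-gons cannot by itself force one to be empty.
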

	
	As we already mentioned, point sets in the plane satisfy $h_k(n) \leq h_k(n+1)$ for all $k$ and~$n$.
	By Proposition~\ref{proposition:projective5holes}, this is no longer true in the real projective plane.
	However, we do not know any other example violating this inequality except of the single case for 5-holes in $\RPP$.
	Thus, it is natural to ask the following question.
	
	\begin{problem}
		\label{problem:subsetproperty}
		Is it true that for every integer $k \geq 3$ there is $n_0=n_0(k)$ such that $h^p_k(n+1) \geq h^p_k(n)$ for every $n \geq n_0$?
	\end{problem}
	
	We have shown in Theorem~\ref{theorem:horton_sets}
	that Horton sets only contain $\Theta(n^2)$ \projective $k$-holes. 
	Since Horton sets only contain $\Theta(n^2)$ \affine $k$-islands \cite{FabilaMonroyHuemer2012}, which is asymptotically minimal,
	we wonder whether the same bound applies to \projective $k$-islands.
	
	\begin{problem}
		\label{problem:islands}
		For every fixed integer $k \geq 3$, is the minimum number of \projective $k$-islands among all sets of $n$ points from $\RPP$ in general position in $\Theta(n^2)$?
	\end{problem}
	
	By Theorem~\ref{theorem:random_sets}, the expected number of 3-holes in random sets of $n$ points from $\RPP$ is in $\Theta(n^2)$.
	In the plane, we know that the expected number of $k$-holes and $k$-islands is in $\Theta(n^2)$ for any fixed $k$~\cite{bsv2021_partI,bsv2021_partII}.
	Can analogous estimates be obtained also in the real projective plane?
	We note that the lower bound $\Omega(n^2)$ follows from the planar case.
	
	\begin{problem}
		\label{problem:quadraticrandom}
		Let $K$ be a compact convex subset in $\mathbb{R}^2$ of unit area and let $k \ge 3$.
		Is the expected number of \projective $k$-holes and $k$-islands in a set of $n$ points, which is chosen uniformly and independently at random from $K \subset \mathbb{R}^2 \subset \RPP$, in $\Theta(n^2)$?
	\end{problem}

	Besides all these Erd\H{o}s--Szekeres-type problems related to $k$-gons, $k$-holes and $k$-islands,
	many other classical problems 
	have natural analogues in the projective plane.
	In the following, we discuss the problem of \emph{crossing families}.
	Let $P$ be a finite set of points in the plane.
	For a positive integer $n$, let $T(n)$ be the largest number such that any set of $n$ points in general position in the plane determines at least $T(n)$ pairwise crossing segments.
	The problem of estimating $T(n)$ was introduced in the 1990s by Erd\H{o}s et al.~\cite{crossingFamilies} who proved $T(n) \geq \Omega(\sqrt{n})$.
	Since then it was widely conjectured that $T(n) \in \Theta(n)$.
	However, nobody has been able to improve the lower bound from \cite{crossingFamilies} until a  recent breakthrough by Pach, Rubin, and Tardos~\cite{pachRubTar21} who showed $T(n) \geq n^{1-o(1)}$.
	
	In $\RPP$, every pair of points determines a projective line that can be divided into two projective line segments.
	Given $2n$ points $p_1,\dots,p_k,q_1,\dots,q_k$ from $\RPP$, we say that they form \emph{\projective crossing family} of size $k$ if, for each $i$, we can choose a projective line segment $s_i$ between $p_i$ and $q_i$ such that for any pair $i,j$ with $1 \leq i < j \leq k$ the projective line segments $s_i$ and $s_j$ intersect.
	We can then ask about the maximum size $T^p(n)$ of a \projective crossing family in a set $P$ of $n$ points from $\RPP$.
	Note that any set of $k$ pairwise crossing segments of~$P$, 
	which live in a plane $\rho \subset \RPP$, gives a \projective crossing family of size $k$ in~$P$.
	Thus, proving a linear lower bound might be simpler for $T^p(n)$ than for~$T(n)$.
	
	\begin{problem}
		\label{problem:crossingFamily}
		Is the maximum size $T^p(n)$ of a \projective crossing family in a set of $n$ points from $\RPP$ in general position in $\Theta(n)$?
	\end{problem}

	All the notions we discussed (general position, convex position, $k$-gons, $k$-holes, $k$-islands, crossing families, and various others) 
	naturally extend 
	to higher dimensional Euclidean spaces and 
	also to higher dimensional projective spaces.
	In fact, $k$-gons and $k$-holes in higher dimensional Euclidean spaces 
	are currently quite actively studied:
	\begin{itemize}
		
		\item
		One central open problem in higher
		dimensions is to determine the largest value $H(d)$ such that every
		sufficiently large set in $\mathbb{R}^d$ contains an $H(d)$-hole.
		While  $H(2)=6$ is known,
		the gap between the upper
		and the lower bound for $H(d)$ remains huge for $d \geq 3$.
		\cite{VALTR1992b,bch20,ConlonLim2021,Scheucher2021_SAT_highdim}
		
		\item 
		For sets of $n$ points sampled independently and uniformly at random from a unit-volume convex body in $\mathbb{R}^d$, 
		the expected number of $k$-holes and $k$-islands is in $\Theta(n^d)$. \cite{bsv2021_partI,bsv2021_partII}
		
		\item
		While the $k$-gons and $k$-holes can be counted efficiently in the Euclidean plane,
		determining the size of the largest gon or hole  is NP-hard 
		already in~$\mathbb{R}^3$. 
		\cite{GiannopoulosKnauerWerner2013}

	\end{itemize}
	
	\medskip
	These analogues  
	in $\RPP$ and in high dimensional projective spaces
	are interesting by themselves, but they might also shed new light on the original problems.
	We plan to address further such analogues  and we hope to also motivate some readers for this line of research.

	\section{Proof of Theorem~\ref{thm:projective_k_gon_theorem}}
	\label{sec:kgons_proof}
	
	Here, we show, for every integer $k \geq 2$, almost matching bounds on the minimum size $ES^p(k)$ that guarantees the existence of a \projective $k$-gon in every set of at least $ES^p(k)$ points from~$\RPP$.
	More precisely, we prove that there are constants $c,c'>0$ such that
	\[
	2^{k-c\log{k}} \leq ES^p(k) \leq 2^{k+c'\sqrt{k\log{k}}}.
	\]
	The upper bound follows from~\eqref{eq-ESbounds}, thus it remains to prove the lower bound on $ES^p(k)$.
	To do so, we construct sets of $2^{k-c\log{k}}$ points in $\RPP$ with no \projective $k$-gon.
	By Observation~\ref{obs-gons}, it suffices to 
	show that $S$ contains
	no $k$ points in convex position and no double chain of size~$k$.
	To obtain such sets, we employ a recursive construction by Erd\H{o}s and Szekeres~\cite{ErdosSzekeres1935}.
	By choosing $c$ sufficiently large, we can assume $k \geq 5$.
	
	Let $X$ and $Y$ be finite sets of points in the Euclidean plane. 
	We say that \emph{$X$ lies deep below $Y$} and \emph{$Y$ lies high above $X$} if each point of $X$ lies below every line through two points of $Y$, and
	each point of $Y$ lies above every line through two points of $X$.
	For $m \geq 2$, we say that a set $C$ of $m$ points in the plane is an \emph{$m$-cup} if its points lie on the graph of a convex function and we call $C$ an \emph{$m$-cap} if its points lie on the graph of a concave function.
	
	We now construct the set $S$ inductively as follows.
	Let $a \geq 2$ and $u \geq 2$ be integers.
	If $a = 2$ or $u = 2$, we let $S_{a,u}$ be a set consisting of a single point from~$\mathbb{R}^2$ and note that $S_{a,u}$ then does not contain a 2-cap nor a 2-cup.
	If $a,u \geq 3$, then we let $S_{a,u}$ be a set obtained by placing a copy of $S_{a,u-1}$ to the left and deep below a copy of $S_{a-1,u}$.
	It follows by induction that $|S_{a,u}| = \binom{a+u-4}{a-2} = \binom{a+u-4}{u-2}$ and that $S_{a,u}$ does not contain an $a$-cap nor a $u$-cup; see~\cite{ErdosSzekeres1935}.
	Finally, we let $S = S_{\lfloor k/2\rfloor,\lfloor k/2\rfloor}$.
	Since $k \geq 5$, we have $\lfloor k/2\rfloor \geq 2$ and thus the set $S$ is well-defined.
	
	Note that $|S| = \binom{\lfloor k/2\rfloor+\lfloor k/2\rfloor-4}{\lfloor k/2\rfloor-2} \geq 2^{k-c\log{k}}$ for some constant $c>0$.
	The set $S$ does not contain $k$ points in convex position, as such a $k$-tuple contains either a $\lfloor k/2\rfloor$-cap or a $\lfloor k/2\rfloor$-cup.
	Thus, it remains to show that $S$ does not contain a double chain of size $k$.

	Suppose for contradiction that $W$ is  
	a double chain $k$-wedge with $A \cup B$ in $S$ with $A=\{s_1,\dots,s_m\}$ and $B=\{r_1,\dots,r_{k-m}\}$ for some $m$ with $1 \leq m \leq k-1$;
	using the notation from Subsection~\ref{sec:prelim}.
	We let $\ell_1$ be the line $\overline{s_1r_{k-m}}$ and $\ell_2$ be the line $\overline{s_m r_1}$.
	Let $a \leq \lfloor k/2\rfloor$ and $u \leq \lfloor k/2\rfloor$ be two numbers such that $W$ has all vertices in $S_{a,u}$ but it does not have all vertices in $S_{a-1,u}$ nor in $S_{a,u-1}$.
	Let $D$ and $U$ be the copies of $S_{a,u-1}$ and $S_{a-1,u}$, respectively, forming $S_{a,u}$.
	We can assume without loss of generality that $|\{s_1,s_m,r_1,r_{k-m}\} \cap D| \geq 2$, as the other case $|\{s_1,s_m,r_1,r_{k-m}\} \cap U| \geq 2$ is treated analogously.
	We distinguish the following two cases.

	\textbf{Case 1:}
	Assume $|\{s_1,s_m,r_1,r_{k-m}\} \cap D| = 2$.
	Then two points from $\{s_1,s_m,r_1,r_{k-m}\}$ are in $D$ and the other two points are in $U$.
	By symmetry, we can assume $s_1 \in U$.
	We distinguish the following two subcases, which are shown in
	Figure~\ref{fig:projective_gons_2}.
	Note that, since the line segments $s_1r_{k-m}$ and $s_mr_1$ cross, the cases $s_1,r_{k-m} \in U$ and $r_1,s_m \in D$ cannot occur.
	
	\begin{figure}[htb]
		\centering
		\includegraphics[page=4,width=\textwidth]{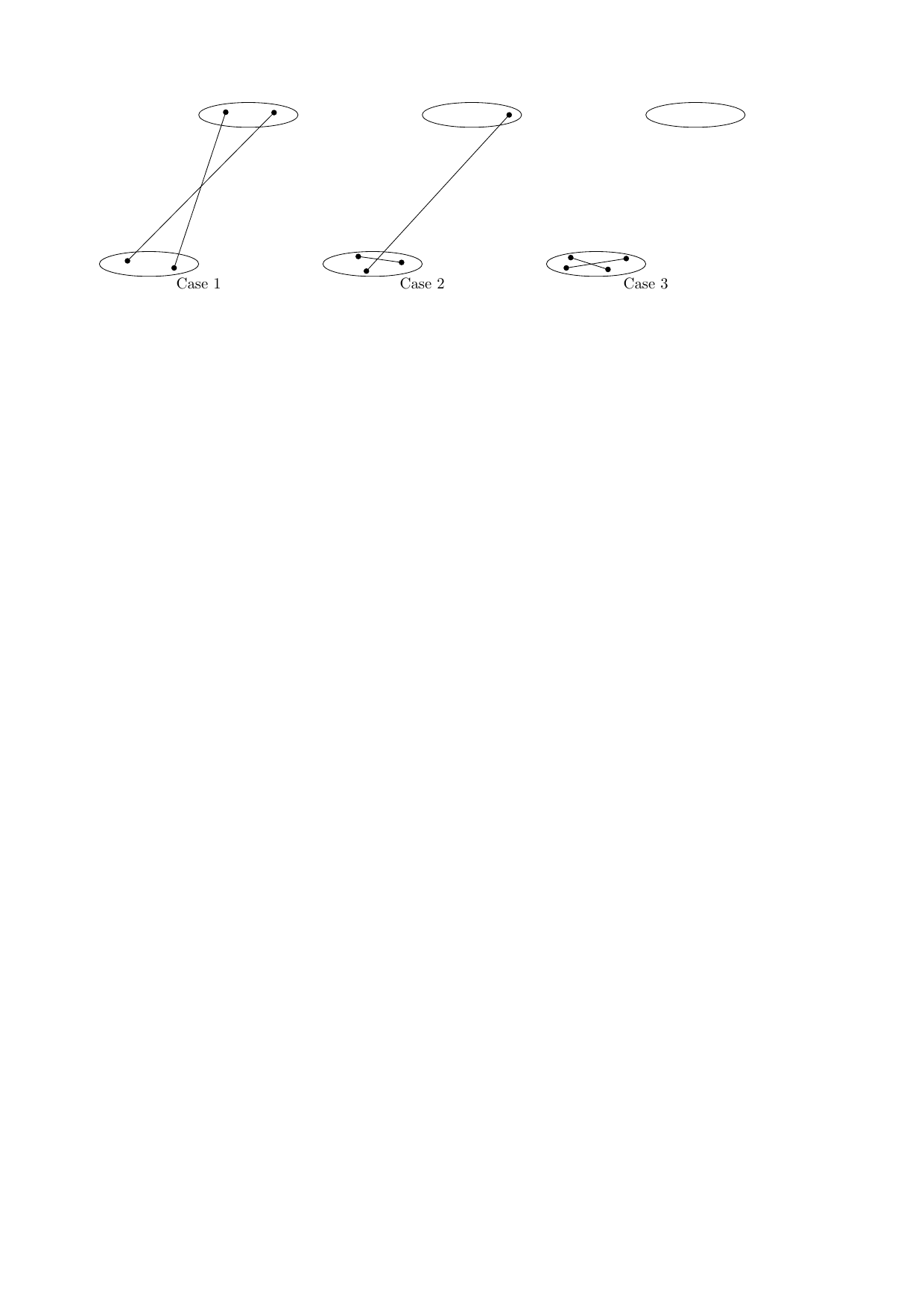} 
		\caption{The cases in the proof of Theorem~\ref{thm:projective_k_gon_theorem}.}
		\label{fig:projective_gons_2}
	\end{figure}
	
	\textbf{Case 1a:}
	Assume $s_1,s_m \in U$ and $r_1,r_{k-m} \in D$.
	We assume that $s_1$ is to the left of $s_m$, otherwise we
	reverse the order of the elements in $A$ and~$B$
	which, in particular, exchanges the roles of $s_1$ and~$s_m$.
	Since $U$ is high above $D$, the line $\overline{s_1r_{k-m}}$ is  almost vertical and separates $s_m$ from~$r_1$, where 
	$s_1$ is to the left of~$s_m$ and
	$r_1$ is to the left of~$r_{k-m}$.
	All points of $A \setminus\{s_1\}$ lie to the right of $\overline{s_1r_{k-m}}$ and to the left of $\overline{s_mr_{k-m}}$.
	Since $D$ is deep below $U$, no point of $D$ satisfies these two conditions.
	Hence all points of $A$ lie in~$U$. 
	An analogous argument shows that
	all points of~$B$ lie in~$D$.
	Since $A$ forms an $m$-cup in $U$ and $B$ forms a $(k-m)$-cap in~$D$, 
	we have $m \leq u - 1$ and $k-m \leq a-1$.
	Consequently, $k = m + (m-k) \leq a  + u - 2 \leq \lfloor k/2\rfloor + \lfloor k/2\rfloor -2 < k$, which is impossible.
	
	\textbf{Case 1b:}
	Assume $s_1,r_1 \in U$ and $s_m,r_{k-m} \in D$.
	We assume that $s_1$ is to the left of $r_1$, as otherwise we exchange the roles of $A$ and $B$
	which, in particular, exchanges the roles of $s_1$ and~$r_1$. 
	Since $U$ is high above $D$, the line $\overline{s_1r_{k-m}}$ is almost vertical and separates $s_m$ from $r_1$ and
	$s_m$ is to the left of $r_{k-m}$. 
	All points of $A \setminus \{s_1\}$ lie to the left of the almost vertical line $\overline{s_1r_{k-m}}$
	and to the right of  the almost vertical line $\overline{s_1s_m}$.
	Hence, $A \cap U = \{s_1\}$ and all points from $A \setminus \{s_1\}$ lie in $D$.
	The set $A \setminus \{s_1\}$ forms an $(m-1)$-cup in $D$ and thus $m-1 \leq u-2$.
	An analogous argument shows that $B \setminus \{r_1\}$ forms a $(k-m-1)$-cap in $U$ and thus $(k-m)-1 \leq a -2$.
	In total, we obtain $k= (m-1) + (k-m-1) + 2 \leq (u-2) + (a-2) +2 \leq \lfloor k/2 \rfloor + \lfloor k/2 \rfloor -2 < k$, which is again impossible.
	
	\textbf{Case 2:} Assume $|\{s_1,s_m,r_1,r_{k-m}\} \cap D| = 3$.
	We can assume that either $s_1$ or $s_m$ lies in~$U$, 
	as otherwise we exchange the roles of $A$ and $B$.
	Furthermore, we can assume that $s_1 \in U$, as otherwise we reverse the order of the elements in $A$ and~$B$.
	Since $U$ is high above $D$, the line $\overline{s_1r_{k-m}}$ is almost vertical and separates $r_1$ and $s_m$.
	Since all vertices of $W$ lie either to the left of the almost vertical line $\overline{s_1s_m}$ and to the right of the almost vertical line $\overline{s_1r_1}$ or to the right of $\overline{s_1s_m}$ and to the left of $\overline{s_1r_1}$,
	the point $s_1$ is the only vertex of $W$ in $U$.
	Hence, the points $S \setminus \{s_1\}$ lie in $D$ and form an $(m-1)$-cup in $D$.
	Thus, $m-1 \leq u-2$.
	The points of $B$ all lie in $D$ and form a $(k-m)$-cap in $D$.
	Thus, $k-m \leq a-1$.
	Altogether, we have $k = (m-1)+1+(k-m) \leq u-2+1+a-1 \leq \lfloor k/2\rfloor + \lfloor k/2\rfloor-2 <k$, which is impossible.
	
	\medskip
	
	Since there is no case left, we have a contradiction with the assumption that $W$ is a double chain $k$-wedge with vertices in $S$.
	This completes the proof of Theorem~\ref{thm:projective_k_gon_theorem}.

	\section{Proofs of Theorems~\ref{thm:holesExistence} and~\ref{theorem:horton_sets}}
	\label{sec:horton_sets_proof}
	
	In this section, we prove that there are arbitrarily large finite sets of points from $\RPP$ in general position with no \projective 8-hole and with only quadratically many \projective $k$-holes for every $k \leq 7$.
	We also derive the exact formula for the number of \projective 3-holes in canonical Horton sets.
	
	The construction uses so-called \emph{Horton sets} defined by Valtr~\cite{Valtr1992a}.
	Let $H$ be a set of $n$ points $p_1,\ldots,p_n$ from $\mathbb{R}^2$ with distinct $x$--coordinates $x(p_1),\dots,x(p_n)$, sorted according to increasing $x$-coordinates.
	Let $H_0$ be the set of points $p_i$ with odd $i$ and let $H_1$ be the set of points $p_i$ with even $i$.
	The set $H$ is \emph{Horton} if either $|H| \le 1$ or if $|H| \ge 2 $, $H_0$ and $H_1$ are both Horton, and $H_0$ lies deep below or high above $H_1$.
	As in Section~\ref{sec:kgons_proof}, we say that \emph{$H_0$ lies deep below $H_1$} and \emph{$H_1$ lies high above $H_0$} if every point of $H_0$ lies below every line spanned by two points of $H_1$, and
	every point of $H_1$ lies above every line spanned by two points of $H_0$.
	
	Any Horton set $H$ of at least two points is the disjoint union of some $H_0$ and~$H_1$. We denote two sets by $U(H)$ and $L(H)$ in such a way that $U(H)$ lies high above $L(H)$.
	The sets $U(H)$ and $L(H)$ are again Horton sets, thus
	the recursive use of this definition gives us the definition of the sets $U(U(H))$, $U(L(H))$, $L(U(H))$, $L(L(H))$, $U(U(U(H)))$ etc., provided that $H$ contains sufficiently many points.
	All these sets, including $H$, $U(H)$, and $L(H)$, are Horton sets according to the definition, and we call 
	them the \emph{layers} of~$H$. For example, if a Horton set has size $2^z$ for some positive integer~$z$, then it has exactly $2^{z-i}$ different layers of size $2^i$ for each $i=0,1,\dots,z$, and it has no other layers.
	
	For a nonempty subset  $A$ of $H$, 
	we define the \emph{base} of $A$ in $H$ as the smallest layer of $H$ containing $A$.
	If the base of $A$ is $H$, then we call $A$ \emph{basic in $H$}, or simply \emph{basic} if $H$ is clear from the context.
	
	As in Section~\ref{sec:kgons_proof},
	we use the terms \emph{$k$-cup} and \emph{$k$-cap}.
	A \emph{cap} is a set that is a $k$-cap for some integer $k$ and, analogously, a \emph{cup} is a set that is a $k$-cup for some $k$.
	A cap $C$ is \emph{open} in a set $S \subseteq \mathbb{R}^2$ if there is no point of $S$ below $C$,
	that is,
	for every pair of points $c_1,c_2$ from $C$,
	no point of $S$ has $x$ its coordinate between $x(c_1)$ and $x(c_2)$
	and lies below the line $\overline{c_1c_2}$.
	Analogously, a cup in $S$ is \emph{open} in $S$ if there is no point of $S$ above it.
	
	\subsection{\texorpdfstring{Quadratic upper bounds on the number of $k$-holes}{Quadratic upper bounds on the number of k-holes}}\label{subsection:qudratic-upper-bound}
	
	We show that any Horton set on $n$ points embedded in the real projective plane does not contain 8-holes and that $H$ has at most $O(n^2)$ projective $k$-holes for every $k \in \{3,\dots,7\}$.
	By Observation~\ref{obs-holes}, it suffices to show that any Horton set $H$ on $n$ points in the plane does not contain 8-holes nor an empty double chain 8-wedge
	and that, for every $k \in \{3,\dots,7\}$, $H$ contains only at most $O(n^2)$ $k$-holes and empty double chain $k$-wedges.
	Valtr~\cite{Valtr1992a} showed that any Horton set in the plane does not contain 7-holes and that it does not contain any open 4-cap nor an open 4-cup.
	B\'ar\'any and Valtr~\cite{BaranyValtr2004} showed that the number of $k$-holes in any Horton set of size $n$ is at most $O(n^2)$ for every $k \in \{3,\dots,6\}$.
	Thus, it suffices to estimate the number of double chain $k$-wedges in Horton sets.
	
	Let $H$ be a Horton set with $n$ points in the plane.
	We first show that the number of open caps in every Horton set $H$ with $n$ points  in the plane is at most $O(n)$ and that analogous statement is true for open cups.
	To prove this claim, it suffices to consider only open 2-caps and 3-caps, as $H$ does not contain open 4-caps.
	
	We proceed by induction on $\log_2{n}$ and show that the number $t_2(H)$ of open 2-caps equals $2n-\log_2{(n)}-2$ and that the number $t_3(H)$ of open 3-caps in $H$ equals $n-\log_2{(n)}-1$ if $n$ is a power of 2.
	Both expressions hold for $\log_2{n}=0$ and thus we assume $\log_2{n} \geq 1$.
	Let $p_1,\dots,p_n$ be the points of $H$ ordered according to increasing coordinates and let $H_0=L(H)$ and $H_1=U(H)$ be the sets that partition $H$ such that $H_0$ is deep below $H_1$.
	Every line segment $p_ip_{i+1}$ forms an open 2-cap in $H$ and there is no other open 2-cap in $H$ with points in $H_0$ and $H_1$, as there is a point of $H_1$ above any such line segment $p_ip_j$ with $j>i+1$. 
	Since no two points from $H_1$ form an open 2-cap in $H$,  we have $t_2(H_0) + n-1$ open 2-caps in $H$.
	By the induction hypothesis applied to $H_0$, it follows that $t_2(H)=2n-\log_2{(n)}-2$.
	
	To determine the number of open 3-caps in $H$, 
	note that every triple $p_ip_{i+1}p_{i+2}$ with odd $i$ if $H_0$ contains odd points of $H$ and even $i$ if $H_0$ contains even points of $H$ forms an open 3-cap in $H$.
	In fact, there is no other open 3-cap in~$H$ with a point in $H_0$ and also in~$H_1$, as there is a point of $H_1$ above any such line segment $p_ip_j$ with $j>i+1$. 
	Since no three points in $H_1$ form an open 3-cap in $H$, we obtain $t_3(H_0) + n/2-1$ open 3-caps in $H$.
	The induction hypothesis for $H_0$ then gives $t_3(H)=n-\log_2{(n)}-1$.
	
	If $n$ is not a power of two, we consider a Horton set $H'$ of size $m$ instead, where $m$ is as the smallest power of 2 larger than~$n$, and denote its leftmost $n$ points by~$H''$. 
	Since $H''$ is also a Horton set of $n$ points and 
	contains the same open caps as~$H$,
	we obtain $t_2(H) \le t_2(H') < 4n$ and $t_3(H) \le t_3(H') < 2n$. 
	Overall, the number of open caps in $H$ is at most~$O(n)$.
	With an analogous argument we obtain the same upper bound on the number of open 2- and 3-cups~in~$H$.

	We now proceed with the proof by induction on $n$.
	For some integer $k \geq 3$, let $W \subseteq H$ be a double chain $k$-wedge that is empty in $H$.
	We will show that $k \leq 7$ and, for $k \in \{3,\dots,7\}$, we estimate the number of such double chain $k$-wedges with points in $H_0$ and $H_1$ by $O(n^2)$.
	Clearly, the claims about the double chain $k$-wedges are true in any Horton set with one or two points, so we assume $n \geq 3$.
	
	If $W$ is contained in $H_0$ or in $H_1$, then $k \leq 7$ by the induction hypothesis.
	Thus, we assume that $W$ contains a point from $H_0$ and also from $H_1$.
	Using the notation from Subsection~\ref{sec:prelim}, let $A= \{s_1,\dots,s_m\}$ and $B = \{r_1,\dots,r_{k-m}\}$ be the vertices of $W$ such that $A$ and $B$ lie on opposite sides of the double chain. 
	We have $1 \leq m \leq k-1$ and, by symmetry, we can assume without loss of generality that $m \leq k-m$. 
	
	\textbf{Case 1:}
	Assume $m=1$. 
	By symmetry, we can assume that $s_1 \in H_1$.
	Moreover, we can assume $r_{k-1} \in H_0$ because if both points $r_1$ and $r_{k-1}$ lie in $H_1$, then $W \subseteq H_1$, which is impossible as $W \cap H_0$ is nonempty.
	
	\textbf{Case 1a:}
	Assume $r_1 \in H_0$; see Figure~\ref{fig:horton_holes_2}.
	No point $r_i$ lies in~$H_1$ as 
	otherwise the line $\overline{s_1 r_i}$ separates $r_1$ from $r_{k-1}$, which is impossible as $H_1$ is high above~$H_0$.
	The points from $B$ form a $(k-1)$-cap $C$ in~$H_0$.
	The cap $C$ is is open in $H$, as $W$ is empty in~$H$.
	Since there are no open 4-caps in $H$, we get $k \leq 4$.
	Also, since the number of open caps in $H$ is at most $O(n)$ and there are at most $n$ choices for $s_1$, 
	there are at most $O(n^2)$ choices for $W$ in this case.

	\textbf{Case 1b:}
	Assume $r_1 \in H_1$; see Figure~\ref{fig:horton_holes_2}.
	Then $r_{k-1}$ is the only vertex of $W$ in $H_0$, 
	as otherwise for any other point $r_i$ in $H_0$ 
	the line $\overline{r_{k-1} r_i}$ 
	separates $s_1$ from $r_1$, which is impossible as $H_0$ is deep below $H_1$.
	Moreover, $r_{k-1}$ is the leftmost or rightmost point of $H_0$, as otherwise there is a point from $H_0$ below $\overline{s_1r_1}$ and above $\overline{s_1r_{k-1}}$ and then $W$ is not empty in $H$. 
	The points of $B \setminus \{r_{k-1}\}$ form a $(k-2)$-cap $C$ in $H_1$.
	Since $W$ is empty in $H$, the cap $C$ is open in $H_1$.
	As there are no open 4-cups in $H_1$, we get $k \leq 5$.
	There are $O(n)$ open caps in $H_1$, at most two choices for $r_{k-1}$, and at most $O(n)$ choices for $s_1$.
	Altogether, we have at most $O(n^2)$ choices for $W$ in this case.
	
	\begin{figure}[htb]
		\centering
		\includegraphics[page=7,width=\textwidth]{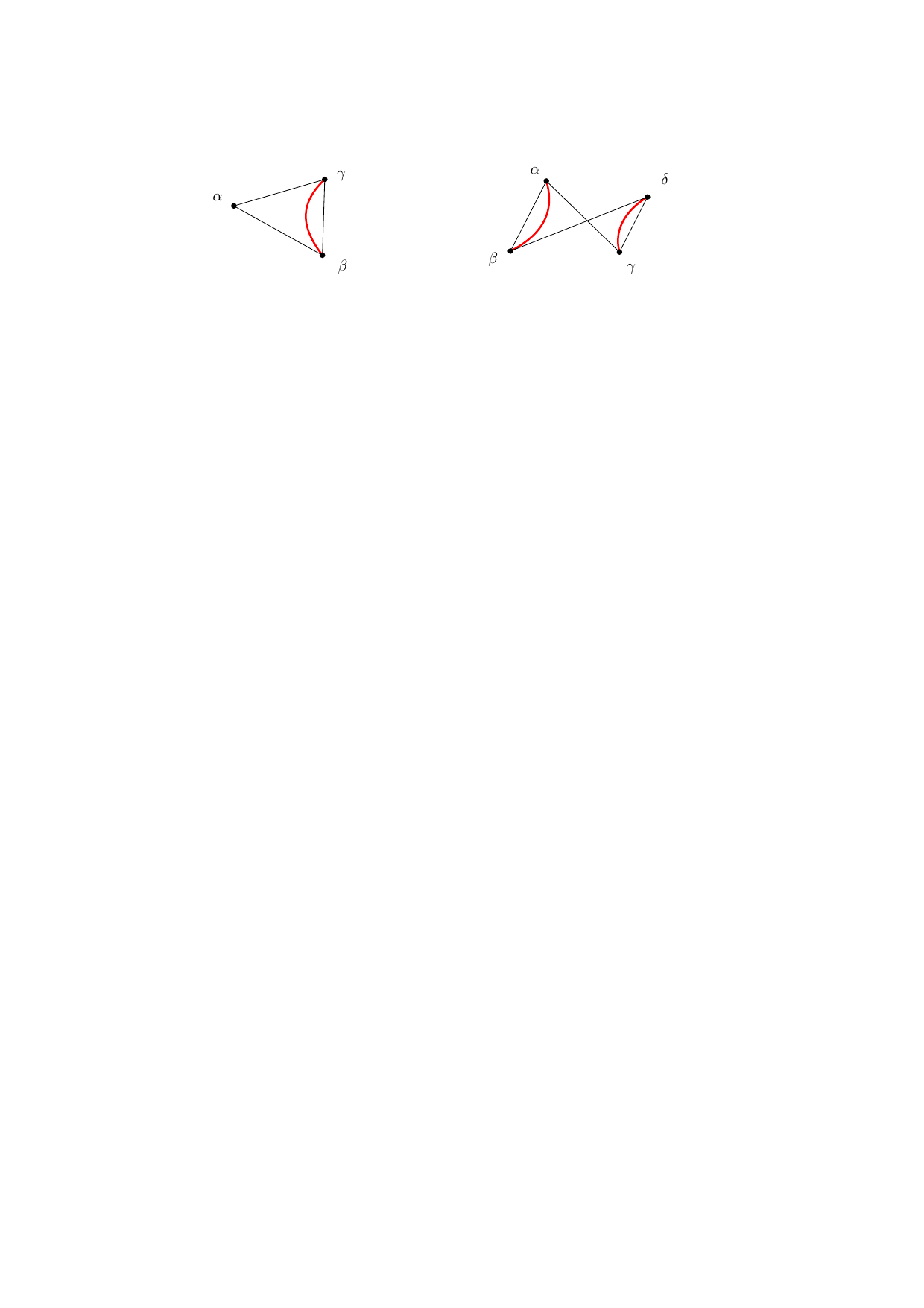}
		\caption{An illustration of the cases in the proofs of Theorems~\ref{thm:holesExistence} and~\ref{theorem:horton_sets}.} 
		\label{fig:horton_holes_2}
	\end{figure}

	\textbf{Case 2:}
	Assume $m \ge 2$.
	By symmetry, we can assume that at least two points from $\{s_1,s_m,r_1,r_{k-m}\}$ are in $H_0$ and that $s_1 \in H_1$.
	We distinguish the following two subcases.
	
	\textbf{Case 2a:}
	Assume $|\{s_1,s_m,r_1,r_{k-m}\}\cap H_0|=3$, that is, all three points $s_m,r_1,r_{k-m}$ lie in~$H_0$; see Figure~\ref{fig:horton_holes_2}.
	By symmetry, we can assume that 
	$s_m$ lies to the left of $r_1$ as the case when $s_m$ lies to the right of $r_1$ is analogous.
	Since the two line segments $s_1r_{k-m}$ and $s_mr_1$ cross, the point $r_{k-m}$ lies between $s_1$ and~$r_1$. 
	Next, we observe that $s_1$ is the leftmost point of~$H_1$, 
	as otherwise there would be a point of $H_1$ to the left of the line $\overline{s_1r_{k-m}}$ and above $\overline{s_mr_1}$, and then $W$ would not empty in~$H$.
	Since $H_0$ is deep below $H_1$,
	all points of $A \setminus \{s_1\}$ lie in~$H_0$. 
	Since all points of $B$ are contained in the triangle spanned by the points $r_1, r_{k-m},s_m$, which all lie in $H_0$, all points of $B$ lie in~$H_0$.
	The points of $A \setminus \{s_1\}$ form an $(m-1)$-cup $C$ in~$H_0$
	and 
	the points of $B$ form a $(k-m)$-cap $C'$ in~$H_0$.
	The cup $C$ and cap $C'$ are open in~$H_0$, as $W$ is empty in~$H$.
	Since $H$ does not contain open 4-cups nor open 4-caps, we get $k = (m-1) + 1 + (k-m) \leq 3 + 1 + 3 \leq 7$.
	Furthermore, there are $O(n)$ choices for $C$ and $C'$ and two choices for $s_1$, and thus we again have at most $O(n^2)$ choices for~$W$.

	\textbf{Case 2b:}
	Assume that $|\{s_1,s_m,r_1,r_{k-m}\}\cap H_0|=2$.
	Note that, since the two segments $s_1r_{k-m}$ and $r_1s_m$ cross,
	the case $s_1,r_{k-m} \in H_1$ and $r_1,s_m \in H_0$ cannot occur.
	
	First, we consider the case $s_1,r_1 \in H_1$ and $s_m,r_{k-m} \in H_0$; see Figure~\ref{fig:horton_holes_2}.
	Analogously as before, we see that each of the points $s_1,r_1$ is the leftmost or the rightmost point of~$H_1$, and each of the points $s_m,r_{k-m}$ is the leftmost or the rightmost point of~$H_0$.
	Since $H_0$ is deep below $H_1$, we have $k=4$ and there is at most one choice for $W$ in this case.
	
	Now, we assume that $s_1,s_m \in H_1$ and $r_1,r_{k-m} \in H_0$; see Figure~\ref{fig:horton_holes_2}.
	Since each point of $A$ lies on or above the lines $\overline{s_1r_{k-m}}$ and $\overline{s_mr_1}$, all points of $A$ lie in~$H_1$.
	Analogously, all points of $B$ lie in~$H_0$.
	The points from $A$ form an $m$-cup $C$ in $H_1$ and the points from $B$ form a $(k-m)$-cap in~$H_1$.
	Since $W$ is empty in $H$, the cup $C$ and the cap $C'$ are open in~$H$.
	Since there are no open 4-caps or 4-cups in~$H$,
	we obtain $k=m+(k-m) \leq 3+3\leq 6$.
	Furthermore, there are at most $O(n)$ choices for $C$ and $C'$ and thus, again, we have at most $O(n^2)$ choices for~$W$.
	
	\medskip
	
	Altogether, we see that $H$ contains no double chain 8-wedge that is empty in $H$ and that has points in $H_0$ and $H_1$.
	By the induction hypothesis, the sets $H_0$ and $H_1$ do not contain any double chain 8-wedge that is empty in~$H_0$ and in $H_1$, respectively.
	Since every double chain 8-wedge that is contained in $H_i$ and is empty in $H$ is also empty in $H_i$ for every $i \in \{0,1\}$, we see that there is no double chain 8-wedge in~$H$ that is empty in~$H$.
	This completes the proof of Theorem~\ref{thm:holesExistence}.
	
	Let $k \in \{3,\dots,7\}$.
	For the quadratic upper bounds, we have shown that there is a constant $c$ such that $H$ contains at most $cn^2$ double chain $k$-wedges that are empty in~$H$ and that have points in $H_0$ and~$H_1$.
	Altogether, the number $w_k(H)$ of empty double chain $k$-wedges in $H$ satisfies $w_k(H) \leq w_k(H_0) + w_k(H_1) + cn^2$.
	Solving this linear recurrence with the initial condition $w_k(H')=0$ for any set $H'$ with $|H'| =1$ gives $w_k(H) \leq O(n^2)$.
	This completes the proof of the first part of Theorem~\ref{theorem:horton_sets}.

	\subsection{Holes in the canonical Horton set}
	\label{subsec:holes_perfect_horton_set}
	
	In this subsection, we deal with point sets in ${\mathbb{R}}^2\subset\RPP$.
	We explain all the constructions in the plane, and therefore, for the sake of simplicity, we use the notions \emph{affine hole} and \emph{projective hole} for an (affine) hole in the plane and for the intersection of a projective hole with ${\mathbb{R}}^2$, respectively. Thus, a non-affine projective hole is an empty double chain wedge.

	We compute the exact number of $3$-holes of so-called canonical Horton set of size $n=2^z$, where $z$ is an integer. 
	The \emph{canonical Horton set} is a Horton set such that for each its layer $X$, the leftmost point of $X$ lies in $L(X)$.
	Up to the order type equivalence, the canonical Horton set of a given size is unique.
	Horton~\cite{Horton1983} constructed canonical Horton sets
	of size $2^z$, where $z$ is a positive integer, to show the existence of arbitrarily large planar point sets with no $7$-hole. 
	We denote the canonical Horton set of size $n$ by $CH(n)$.
	
	For a finite set $H$ of points in $\mathbb{R}^2$, we denote by ${\cal A}(H)$ the arrangement of lines determined by $H$. If the points of $H$ have distinct $x$-coordinates, we denote
	by ${\cal A}(H)^+$ and ${\cal A}(H)^-$ the two ``opposite'' unbounded cells of ${\cal A}(H)$ which lie above (below, respectively) all lines of ${\cal A}(H)$.
	

	\begin{lemma}\label{lemma:regions}
		If $X$ is a layer of a Horton set $H$, then all points of $H\setminus X$
		lie in ${\cal A}(X)^+\cup{\cal A}(X)^-$.
	\end{lemma}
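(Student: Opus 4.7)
The plan is to prove the lemma by induction on $|H|$, exploiting the recursive definition of a Horton set together with the ``deep below / high above'' separation property.

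The base case $|H|\le 1$ is trivial since then the only layer is $H$ itself and $H\setminus X=\emptyset$. For the inductive step, I write $H=L(H)\cup U(H)$, with $U(H)$ high above $L(H)$, and split into three cases according to which layer $X$ is. If $X=H$ there is again nothing to prove. If $X$ is a proper layer of $L(H)$, then $X\subseteq L(H)$, and I would handle the points of $H\setminus X=(L(H)\setminus X)\cup U(H)$ in two sub-steps:
\begin{itemize}
\item For points of $L(H)\setminus X$: since $X$ is a layer of the smaller Horton set $L(H)$, the inductive hypothesis applied to $L(H)$ gives that these points lie in $\mathcal{A}(X)^+\cup\mathcal{A}(X)^-$.
\item For points of $U(H)$: every line of $\mathcal{A}(X)$ is spanned by two points of $X\subseteq L(H)$, so by the ``high above'' property every point of $U(H)$ lies strictly above it. Hence $U(H)\subseteq\mathcal{A}(X)^+$.
\end{itemize}
The case where $X$ is a proper layer of $U(H)$ is completely symmetric: points of $U(H)\setminus X$ are controlled by the inductive hypothesis applied to $U(H)$, and points of $L(H)$ lie below every line spanned by two points of $X\subseteq U(H)$, so they sit in $\mathcal{A}(X)^-$.

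I do not anticipate a real obstacle here; the argument is essentially a bookkeeping exercise once the recursive definition of layers and the separation property are in place. The only point requiring mild care is to observe that ``lying above every line spanned by two points of $L(H)$'' immediately implies ``lying in the topmost unbounded cell of the sub-arrangement $\mathcal{A}(X)$'', which is clear from the definition of $\mathcal{A}(X)^+$ as the unique unbounded cell above every line of $\mathcal{A}(X)$. Combining the three cases closes the induction and yields the lemma.
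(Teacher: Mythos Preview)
Your proof is correct and uses the same ingredients as the paper's—the recursive Horton structure together with the deep-below/high-above separation—though you organize the induction on $|H|$, whereas the paper fixes $H$ and inducts on the layer $X$, passing from $X$ to its children $U(X)$ and $L(X)$ via the inclusion $\mathcal{A}(X)^+\cup\mathcal{A}(X)^-\subseteq\mathcal{A}(U(X))^+\cup\mathcal{A}(U(X))^-$. Both arguments are equally short and essentially equivalent.
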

	
	\begin{proof}
		The lemma is trivially true for $X=H$. Thus, it suffices to prove that
		if the lemma holds for some layer $X$ of $H$ then it also holds for the two layers $U(X)$, $L(X)$ of $H$. We prove it for the layer $U(X)$; the proof for $L(X)$ is analogous. We need to prove that the points of $H\setminus U(X)$
		lie in ${\cal A}(U(X))^+\cup{\cal A}(U(X))^-$. We have
		$H\setminus U(X)= L(X)\cup(H\setminus X)$. The points of $L(X)$ lie in
		${\cal A}(U(X))^-$ by the definition of Horton sets.
		The points of $H\setminus X$ lie in $ {\cal A}(X)^+\cup{\cal A}(X)^-  \subseteq  {\cal A}(U(X))^+\cup{\cal A}(U(X))^-$
		by the assumption that the lemma holds for the layer $X$ of $H$. 
	\end{proof}
	
	Observe that a projective hole $P$ in a Horton set either intersects every vertical line in a bounded lie segment or the intersection is unbounded for every vertical line.
	In the first case, we call the projective hole $P$ \emph{skew} and in the other case we call $P$ \emph{non-skew}.
	
	\begin{proposition}\label{prop:skew}
		Let $X$ be a layer in a Horton set $H$, and let $X\neq H$. A projective hole of $X$ is a projective hole of $H$ if and only if it is skew.
	\end{proposition}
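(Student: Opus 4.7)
My plan is to work projectively in $\RPP$ and exploit the fact that the boundary of any projective hole $P$ of $X$ lies on lines of ${\cal A}(X)$, so every two-dimensional cell of this arrangement is either entirely contained in $P^\circ$ or entirely disjoint from $P$. A key auxiliary point is that, since the points of the Horton set $H$ have pairwise distinct $x$-coordinates, no line of ${\cal A}(X)$ is vertical, so the vertical point at infinity $v_\infty \in \RPP$ is not on any line of ${\cal A}(X)$ and lies strictly inside a single projective cell $C_\infty$ of this arrangement. A neighbourhood of $v_\infty$ in $\RPP$ consists of points with $|y|$ arbitrarily large (for $x$ bounded), which in $\mathbb{R}^2$ belong alternately to ${\cal A}(X)^+$ and to ${\cal A}(X)^-$; hence both of these affine cells lie in $C_\infty$, and by Lemma~\ref{lemma:regions} the set $H \setminus X$ lies in $C_\infty$.

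The first step of the actual proof is to verify the equivalence ``$P$ is skew iff $v_\infty \notin P$''. Because $\partial P \subseteq \bigcup {\cal A}(X)$ and $v_\infty$ is off this arrangement, $v_\infty$ can never be on $\partial P$; it is either in $P^\circ$ or in the exterior of $P$. In the first case, a projective neighbourhood of $v_\infty$ is contained in $P^\circ$ and meets each vertical affine line at arbitrarily large $|y|$, so $P$ is non-skew. In the second case, the analogous argument applied to the complement of $P$ shows that every vertical line meets $P^\circ$ in a bounded segment, so $P$ is skew.

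The second step combines this equivalence with the cell dichotomy. Since $C_\infty$ is either fully in $P^\circ$ or disjoint from $P$, and the former case happens precisely when $v_\infty \in P^\circ$ (that is, when $P$ is non-skew), I conclude that $H \setminus X \subseteq C_\infty$ lies entirely in $P^\circ$ whenever $P$ is non-skew and is disjoint from $P$ whenever $P$ is skew. As $X \neq H$, the set $H \setminus X$ is non-empty, so in the non-skew case $P$ contains at least one point of $H \setminus X$ and thus fails to be a projective hole of $H$, while in the skew case $P$ is empty in~$H$ and is a projective hole of~$H$. The main subtlety I expect, which I would treat carefully, is the projective identification of ${\cal A}(X)^+ \cup {\cal A}(X)^-$ as lying inside a single projective cell of ${\cal A}(X)$ whose interior contains $v_\infty$; this is where the Horton-set hypothesis of distinct $x$-coordinates is used crucially.
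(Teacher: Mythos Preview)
Your proof is correct and rests on the same core observation as the paper's proof: the boundary of a projective hole $P$ of $X$ lies on lines of ${\cal A}(X)$, so each open cell of this arrangement is either entirely in $P^\circ$ or entirely outside $P$, and by Lemma~\ref{lemma:regions} the points of $H\setminus X$ sit in the two unbounded cells ${\cal A}(X)^+$ and ${\cal A}(X)^-$.

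The route differs in presentation. The paper stays in the affine picture and argues directly that skewness is equivalent to $P^\circ$ being disjoint from ${\cal A}(X)^+\cup{\cal A}(X)^-$ (and non-skewness to containing it), then invokes Lemma~\ref{lemma:regions}. You instead pass to $\RPP$, introduce the vertical point at infinity $v_\infty$, and show that ${\cal A}(X)^+$ and ${\cal A}(X)^-$ merge into a single projective cell $C_\infty$ containing $v_\infty$; the dichotomy then reduces to whether $v_\infty\in P^\circ$, which you cleanly identify with non-skewness. This projective reformulation is a genuine (if modest) variation: it makes the cell dichotomy and the role of the ``no vertical lines'' hypothesis completely explicit, at the cost of a little extra setup. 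The paper's version is shorter but leaves the reader to verify the key equivalence between skewness and disjointness from the two unbounded cells.
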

	
	\begin{proof}
		Suppose first that a projective hole $A$ of $X$ is skew.
		Then the interior of $A$ is disjoint from ${\cal A}(X)^+\cup{\cal A}(X)^-$
		and therefore it is also disjoint from ${\cal A}(H)^+\cup{\cal A}(H)^-$
		($\subseteq {\cal A}(X)^+\cup{\cal A}(X)^-$).
		Thus, due to Lemma~\ref{lemma:regions}, the interior of $A$ contains no points of $H\setminus X$. Consequently, $A$ is a projective hole of the whole set $H$.
		
		Suppose now that a projective hole $A$ of $X$ is non-skew. Then the region 
		${\cal A}(X)^+\cup{\cal A}(X)^-$ (containing all points of $H\setminus X$
		due to Lemma~\ref{lemma:regions}) is contained in the interior of $A$.
		Since $H\setminus X\neq \emptyset$, it follows that $A$ is not a projective hole of $X$.
	\end{proof}
	
	Below, we distinguish type 1a and type 1b 3-holes according to Cases 1a and 1b in
	Subsection~\ref{subsection:qudratic-upper-bound} and we estimate the number of 3-holes of each type in the following three claims.
	

	\begin{claim}[affine $3$-holes]
		\label{claim:affine-3-holes}
		Any Horton set of size $n=2^z$ contains 
		\[
		2^{2z+1}- \left(\frac12 z^2 + \frac32 z +2\right)2^z \ \ \ \ \ \ \ \ \ \ \ \ \ \ \ \left(=2n^2-  
		\left(\frac12 \log^2_2n + \frac32\log_2n +2\right) n \right)
		\]
		$3$-holes in $\mathbb{R}^2$.
	\end{claim}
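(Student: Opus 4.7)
The plan is to prove the claim by induction on $z$, exploiting the recursive structure of the perfect Horton set. Write $H = PH(2^z) = L \cup U$, where $L = L(H)$ and $U = U(H)$ are the two sublayers, each order-isomorphic to $PH(2^{z-1})$. Let $a(z)$ denote the number of affine 3-holes in $PH(2^z)$. First, I would partition the 3-holes of $H$ into three classes: those contained in $L$, those contained in $U$, and \emph{mixed} 3-holes having vertices in both sublayers. By the deep-below/high-above property, every point of $U$ lies above every line spanned by two points of $L$, so the interior of a triangle spanned by $L$ cannot contain any point of $U$; such a triangle is thus a 3-hole of $H$ iff it is a 3-hole of $L$, and analogously for $U$. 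This yields the recurrence
$$a(z) \;=\; 2\,a(z-1) + m(z),$$
where $m(z)$ denotes the number of mixed 3-holes in $PH(2^z)$.

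The crux of the proof is then to show $m(z) = 2^z(2^z - z - 1)$. I would split mixed triangles into type $LUU$ (one vertex in $L$, two in $U$) and type $LLU$ (two in $L$, one in $U$), and further stratify each type by the $x$-order of the three vertices. For each resulting subcase, I would use the deep-below/high-above property, the perfect-Horton property (that the leftmost point of every recursive sublayer lies in its lower half), and the counts of open $2$-caps, $3$-caps, $2$-cups, and $3$-cups in Horton sets already established in Subsection~\ref{subsection:qudratic-upper-boundApen} to determine precisely which mixed triples are empty. A useful observation is that, for a fixed two-vertex ``side'' of the triangle, the set of ``third points'' that close it into an empty triangle is tied directly to the open caps and cups in the appropriate layer, which suggests expressing $m(z)$ as a sum over mixed edges weighted by the corresponding open cap/cup counts.

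Finally, the recurrence $a(z) = 2\,a(z-1) + 2^z(2^z - z - 1)$ with initial condition $a(0)=0$ can be solved explicitly. Dividing both sides by $2^z$ gives a telescoping relation for $a(z)/2^z$, and summing yields the claimed closed form
$$a(z) \;=\; 2^{2z+1} - \Bigl(\tfrac{z^2}{2} + \tfrac{3z}{2} + 2\Bigr)\,2^z.$$

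The main obstacle is the case analysis for $m(z)$. For a mixed triangle $\{\ell, u_1, u_2\}$ of type $LUU$, the line $\overline{u_1u_2}$ lies above all of $L$, but a third point of $U$ with $x$-coordinate between $u_1$ and $u_2$ is not a priori constrained by the Horton definition to lie on a specific side of $\overline{u_1u_2}$, so one has to descend into the recursive Horton structure inside $U$ to decide whether such a point falls inside the triangle; an analogous subtlety arises for type $LLU$. Matching the resulting subcounts against the open-cap/cup formulas $2n-\log_2 n -2$ and $n-\log_2 n -1$ is what eventually produces the clean closed form $m(z) = 2^z(2^z-z-1)$ and hence the stated formula for $a(z)$.
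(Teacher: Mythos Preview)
Your approach matches the paper's: set up the recursion $a(z)=2a(z-1)+m(z)$ (the paper equivalently sums the basic-hole count over all layers) and compute the mixed count $m(z)$ via open-pair counts. The paper's key observation, which you are circling but do not quite state, is that a triple $\{u,\ell_1,\ell_2\}$ with $u\in U$ and $\ell_1,\ell_2\in L$ is a $3$-hole of $H$ if and only if $\{\ell_1,\ell_2\}$ is an open-up pair in $L$, \emph{independently of which $u\in U$ is chosen}; symmetrically, an $LUU$ triple is a $3$-hole iff the $U$-pair is open-down in $U$. This yields $m(z)=|U|\cdot(2^{z}-z-1)+|L|\cdot(2^{z}-z-1)=2^{z}(2^{z}-z-1)$ directly, with no stratification by $x$-order and no recursive descent into $U$: the ``obstacle'' you describe for $LUU$ (whether a third point of $U$ lies inside) is precisely the open-down condition on $\{u_1,u_2\}$ in $U$, while a short orientation check using that every point of $U$ lies above every line through two points of $L$ shows that no point of $L\setminus\{\ell\}$ can lie inside an $LUU$ triangle. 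Finally, none of this uses the perfect-Horton property; the claim and the argument hold for arbitrary Horton sets of size $2^{z}$, so you should drop the restriction to $PH(2^{z})$.
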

	
	\begin{claim}[type 1a $3$-holes]
		\label{claim:type1a-3-holes}
		Any Horton set of size $n=2^z$ embedded in $\RPP$ contains 
		\[
		2^{2z+1} - (z^2-z+6)2^z + 4
		\ \ \ \ \ \ \ \ \ \ \ \ \ \ \ \left(=2n^2-  
		\left(\log_2^2n -\log_2n +6\right) n  +4\right)
		\]
		type 1a $3$-holes.
	\end{claim}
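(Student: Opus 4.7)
The plan is to prove Claim~\ref{claim:type1a-3-holes} by induction on $z$, setting up and solving a recurrence for the number $a(z)$ of type~1a 3-holes in $PH(2^z)$.

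First, I characterize type~1a 3-holes explicitly: such a 3-hole in $H = PH(2^z)$ is a triple $(s_1, r_1, r_2)$ with $s_1$ in one of the top-level Horton layers $U(H), L(H)$ and $\{r_1,r_2\}$ in the other (as in Case~1a of Subsection~\ref{subsection:qudratic-upper-boundApen}), whose corresponding non-affine double chain 3-wedge $W_A \cup W_B$ is empty in~$H$. By Proposition~\ref{prop:skew}, non-skew projective holes of a proper sublayer of $H$ are not projective holes of $H$; since type~1a 3-wedges are non-skew (both $W_A$ and $W_B$ extend unboundedly in the vertical direction), every type~1a 3-hole of $H$ must straddle the top-level $U/L$ split. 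Emptiness of $W_A \cup W_B$ in $H$ translates into two conditions: (A)~no $q \in H\setminus\{s_1,r_1,r_2\}$ has $s_1$ in the interior of $\triangle q r_1 r_2$ (ruling out $W_A$), and (B)~no $q \in H\setminus\{s_1,r_1,r_2\}$ lies on the opposite side of $\overline{r_1 r_2}$ from $s_1$ with the open segment $\overline{q s_1}$ crossing the open segment $\overline{r_1 r_2}$ (ruling out $W_B$). Because $U$ lies high above $L$, (A) constrains only $q \in U$ above $s_1$ and (B) only $q \in L$ below $\overline{r_1 r_2}$; both can thus be analyzed through the self-similar Horton sub-structures of $U$ and $L$, each a copy of $PH(2^{z-1})$.

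Next, I split $a(z) = a^U(z) + a^L(z)$ according to whether $s_1 \in U$ or $s_1 \in L$, and enumerate the valid triples by the sub-layers containing $s_1, r_1, r_2$ and by the relative $x$-order of these three points. A careful case analysis produces the recurrence
\[
a(z) = 4\,a(z-1) + (z^2 - 5z + 10)\cdot 2^z - 12, \qquad a(1)=0.
\]
The coefficient $4$ reflects the quadratic growth with $n = 2^z$, while the forcing term aggregates the ``new'' type~1a 3-holes that arise when two copies of $PH(2^{z-1})$ are combined into $PH(2^z)$.

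Finally, I solve the recurrence via the ansatz $a(z) = C \cdot 4^z + (\alpha z^2 + \beta z + \gamma)\cdot 2^z + \delta$. Substitution and matching of coefficients yield $\alpha = -1$, $\beta = 1$, $\gamma = -6$, $\delta = 4$; the initial condition $a(1) = 0$ fixes $C = 2$, giving
\[
a(z) = 2\cdot 4^z - (z^2 - z + 6)\cdot 2^z + 4 = 2^{2z+1} - (z^2 - z + 6)\cdot 2^z + 4,
\]
as claimed. The main obstacle is establishing the recurrence: the case analysis for (A) and (B) must be tracked across multiple levels of the Horton hierarchy, with particular care needed for pairs $\{r_1, r_2\}$ whose wedge $W_B$ reaches beyond the $x$-interval $(x(r_1), x(r_2))$ into neighboring parts of $L$, and symmetrically for $s_1 \in U$ whose wedge $W_A$ can be violated by points of $U$ just above $s_1$ in the narrow reflex wedge determined by $r_1, r_2$.
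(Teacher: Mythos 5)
Your structural premise is false, and it breaks the proof at the root. You assert that type~1a 3-wedges are always non-skew and hence that every type~1a 3-hole of $H$ must straddle the top-level $U/L$ split. In fact a type~1a wedge is skew precisely when the apex $s_1$ does \emph{not} lie horizontally between $r_1$ and $r_2$: in that case neither $W_A$ nor $W_B$ contains the vertical direction, so every vertical line meets the interior in a bounded set (concretely, take $s_1=(0,10)$, $r_1=(1,0)$, $r_2=(2,0.1)$ and check both wedge components). The paper's proof hinges on exactly this: by Proposition~\ref{prop:skew}, a type~1a hole that is basic in a proper layer $X$ of size $2^i$ survives in $H$ if and only if it is skew, and the paper counts $2^{2i}-(2i-1)2^i-2>0$ such survivors per layer, then sums over the $2^{z-i}$ layers of each size $i=2,\dots,z-1$ and adds the top-level basic count $2\cdot 2^{z-1}(2^z-z-1)$. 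If your premise were true, the answer would simply be the top-level count $2^{2z}-(z+1)2^z$, which does not match the claimed formula --- and it would also leave no room for the $4a(z-1)$ term in your own recurrence, so the proposal is internally inconsistent.

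The recurrence itself is unproven, and its combinatorial reading is wrong. The natural self-similar decomposition combines the \emph{two} layers $U,L$, each a Horton set of size $2^{z-1}$, and the type~1a holes inside a layer that survive in $H$ are the \emph{skew} ones, whose count $\sigma(z-1)$ is strictly smaller than $a(z-1)$; one gets the coupled system $a(z)=\mathrm{allbasic}(z)+2\sigma(z-1)$ and $\sigma(z)=\mathrm{skewbasic}(z)+2\sigma(z-1)$, which after eliminating $\sigma$ yields
\[
a(z)=2a(z-1)+2^{2z}-(2z-2)2^z-4,
\]
with coefficient $2$, not $4$. Your recurrence $a(z)=4a(z-1)+(z^2-5z+10)2^z-12$ is numerically consistent with the closed form (I verified this), but it cannot arise as ``holes inside the two half-size copies plus new straddling holes,'' so the announced ``careful case analysis'' has no valid combinatorial target; since that derivation is the entire content of the claim and is nowhere carried out --- you yourself flag it as ``the main obstacle'' --- the proof has a genuine gap rather than a fixable slip. (A minor further point: the claim is stated for arbitrary Horton sets of size $2^z$, since type~1a counts do not require perfection, whereas you work only with the perfect Horton set.)
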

	
	\begin{claim}[type 1b $3$-holes]
		\label{claim:type1b-3-holes}
		The canonical Horton set of size $n=2^z$ embedded in $\RPP$ contains 
		\[
		2^{2z-2} + \frac52 2^z -4z -2 
		\ \ \ \ \ \ \ \ \ \ \ \ \ \ \ \left(=\frac14 n^2 +  \frac52 n
		-4\log_2n - 2 \right)
		\]
		type 1b $3$-holes.
	\end{claim}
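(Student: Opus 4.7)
The plan is to set up a linear recurrence on $z$. Writing $U = U(PH(2^z))$ and $L = L(PH(2^z))$ (each a copy of $PH(2^{z-1})$), and letting $a(z)$ denote the number of type 1b $3$-holes of $PH(2^z)$, I would prove
\[
a(z) = 2\,a(z-1) + b(z),
\]
where $b(z)$ counts the type 1b $3$-holes whose vertex set meets both $U$ and $L$. The recurrence relies on the following skewness lemma: every type 1b $3$-hole of a Horton set has its two non-apex vertices $r_1$ and $r_2$ on the same horizontal side of the apex $s_1$. Granting this, the reflex wedge $W_A$ at $s_1$ lies entirely on the opposite horizontal side of $s_1$ and is squeezed between the lines $\overline{s_1 r_1}$ and $\overline{s_1 r_2}$, so its intersection with every vertical line is a bounded interval. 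The region $W_B$ is an intersection of three affine half-planes whose bounding normals no longer all point in the same vertical direction, hence also has bounded vertical intersections. Thus every type 1b $3$-hole is skew, and by Proposition~\ref{prop:skew} every type 1b $3$-hole of $U$ or $L$ extends to a type 1b $3$-hole of $PH(2^z)$.

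The skewness lemma itself I would prove by contradiction: a configuration with $r_1$ and $r_2$ on opposite horizontal sides of $s_1$ would force $W_B$ to lie beneath the steep line $\overline{s_1 r_2}$ on the far side of $\overline{r_1 r_2}$, sweeping across the entire intermediate portion of $PH(2^z)$ between $r_1$ and $r_2$. Using the deep-below/high-above separation between the layers containing $r_1$ and $r_2$, one sees that some intermediate point of $PH(2^z)$ in the opposite layer necessarily falls into this sweep, contradicting emptiness of the wedge.

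To compute $b(z)$ I would run a four-way case analysis on the apex $s_1 \in \{p_1, p_2, p_{2^z-1}, p_{2^z}\}$ (the leftmost and rightmost points of each of the two halves). For each apex I would count the pairs $(r_1, r_2)$, with $r_1$ in the same half as $s_1$, $r_2$ in the opposite one, and both on the same horizontal side of $s_1$, such that no further point of $PH(2^z)$ falls into $W_A \cup W_B$. Emptiness of $W_A$ reduces to a single slope condition involving the unique point of $PH(2^z)$ located horizontally beyond $s_1$; emptiness of $W_B$ is analysed by descending recursively into the Horton structure. The main obstacle is the bookkeeping across these four apex choices, which are not fully symmetric in a perfect Horton set, together with the need to track a linear correction alongside the dominant quadratic term. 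After carrying this out I expect
\[
b(z) = 2^{2z-3} + 4z - 6 \qquad \text{for } z \ge 2.
\]

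Finally, I would solve the recurrence with initial value $a(1) = 0$. Unrolling gives
\[
a(z) = \sum_{i=2}^{z} 2^{z-i} b(i) = \sum_{i=2}^{z} 2^{z+i-3} + \sum_{i=2}^{z} 2^{z-i}(4i - 6).
\]
The first sum is geometric and equals $2^{2z-2} - 2^{z-1}$. The second is arithmetico-geometric and, via the identity $\sum_{j=0}^{z-2} j\,2^j = (z-3)\,2^{z-1} + 2$, evaluates to $3 \cdot 2^z - 4z - 2$. Adding these pieces gives $a(z) = 2^{2z-2} + \frac{5}{2}\cdot 2^z - 4z - 2$, matching the claim.
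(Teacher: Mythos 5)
Your skeleton matches the paper's proof: the recurrence $a(z)=2a(z-1)+b(z)$ is just the paper's summation over layers in unrolled form, your skewness transfer plays the role of the skew statement inside Claim~\ref{claim:basic1b} combined with Proposition~\ref{prop:skew}, and your closing algebra --- including the value $b(z)=2^{2z-3}+4z-6$ and the evaluation of both sums --- agrees with the paper's computation. The problem is that the exact evaluation of $b(z)$ \emph{is} the substance of the claim, and there your proposal has a genuine gap, in two respects. First, the value is only asserted (``after carrying this out I expect''), not derived. Second, and more seriously, the derivation route you propose rests on a false structural premise: that the apex of a basic type 1b $3$-hole lies in $\{p_1,p_2,p_{2^z-1},p_{2^z}\}$. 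The paper's Claim~\ref{claim:basic1b} shows the extremality constraint sits on the \emph{other} chain: for a basic type 1b hole with apex $\alpha$ and chain $\{\beta,\gamma\}$, where $\alpha,\beta\in U$ and $\gamma\in L$, it is the opposite-half vertex $\gamma$ that is forced to be the rightmost (or leftmost) point of $L$, while the same-half pair $\{\alpha,\beta\}$ only needs to be open up\&down or down\&up in $U$. By Claim~\ref{claim:opendiag} each half, itself a perfect Horton set on $2^{z-1}$ points, contains $2^{2(z-1)-2}+2(z-1)-1$ such open diagonal pairs --- quadratically many, with apexes spread throughout the layer. A concrete counterexample to your premise already lives in the perfect Horton set on $8$ points $p_0,\dots,p_7$ (even indices deep below odd ones): the wedge with apex $p_3$ and chain $\{p_5,p_6\}$ is an empty type 1b $3$-hole, yet $p_3$ is interior to $U=\{p_1,p_3,p_5,p_7\}$. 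Quantitatively, fixing the apex at one of four extreme points leaves only $O(2^z)$ admissible open pairs per apex, so your four-way case analysis can account for at most $O(z\,2^z)$ basic holes --- an order of magnitude short of $b(z)=\Theta(2^{2z})$, killing the quadratic main term $2^{2z-2}$ of the final formula.

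What replaces your case analysis in the paper is a bijective mechanism: it introduces open up\&down and down\&up pairs, proves (Claim~\ref{claim:basic1b}) that the basic type 1b $3$-holes of a Horton set $H$ correspond exactly to such pairs in $U(H)$ and $L(H)$ paired with the forced extreme point of the opposite half, so that their number is $\opendiag(U)+\opendiag(L)$, and then evaluates $\opendiag$ of the perfect Horton set exactly, as $2^{2z-2}+2z-1$, by splitting into basic pairs (one point per half, contributing $2^{2z-2}+1$ in total) and the $2(z-1)$ exceptional non-basic pairs anchored at the extreme points. Your skewness lemma, while plausible and in fact a consequence of the paper's argument, is also only sketched --- the boundedness of vertical sections of $W_B$ does not follow from your remark about half-plane normals but from $W_B$ having the recession cone opposite to that of $W_A$; the paper sidesteps a general lemma by verifying skewness directly for the holes it constructs inside Claim~\ref{claim:basic1b}, which suffices for the transfer via Proposition~\ref{prop:skew}.
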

	
	Claims~\ref{claim:affine-3-holes}--\ref{claim:type1b-3-holes} immediately give the following theorem, since every projetive $3$-hole in a point set in $\mathbb{R}^2\subset\RPP$ is either an affine $3$-hole or a type 1a or type 1b projective $3$-hole.
	
	\begin{theorem}
		The canonical Horton set of size $n=2^z$ embedded in $\RPP$ contains 
		\[
		2^{2z+2} + 2^{2z-2} - \left(\frac32 z^2 + \frac12 z + \frac{11}2 \right) 2^z -4z + 2 
		\]
		\projective $3$-holes.
	\end{theorem}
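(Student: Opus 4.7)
The plan is to verify that every \projective 3-hole in the perfect Horton set of size $n = 2^z$ lies in exactly one of the three classes counted by Claims~\ref{claim:affine-3-holes}, \ref{claim:type1a-3-holes}, and~\ref{claim:type1b-3-holes}, and then simply add the three formulas.

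First, I would invoke Observation~\ref{obs-holes} to split each \projective 3-hole into either an \affine 3-hole (an empty triangle in $\mathbb{R}^2$, counted by Claim~\ref{claim:affine-3-holes}) or an empty double chain 3-wedge. For $k=3$ the shorter chain necessarily has a single vertex $s_1$, so only the case $m=1$ of the analysis in Subsection~\ref{subsection:qudratic-upper-boundApen} can occur. I would apply that analysis at the unique smallest layer $X$ of $H$ such that both $L(X)$ and $U(X)$ contain vertices of the 3-wedge. Relative to this split, the wedge is either of type 1a (the two vertices of the longer chain lie on the side of $X$ opposite to $s_1$) or of type 1b ($s_1$ shares a side with exactly one of the remaining two vertices, and the third vertex is alone on the other side). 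These cases are mutually exclusive and exhaustive, so Claims~\ref{claim:type1a-3-holes} and~\ref{claim:type1b-3-holes} together enumerate all non-affine \projective 3-holes without overlap.

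Given this partition, the theorem follows by pure arithmetic: combining the $2^{2z}$-coefficients $2 + 2 + \tfrac{1}{4}$ yields $\tfrac{17}{4} = 2^2 + 2^{-2}$, which reproduces $2^{2z+2} + 2^{2z-2}$; combining the $2^z$-coefficients gives $-(\tfrac{1}{2}z^2+\tfrac{3}{2}z+2) - (z^2-z+6) + \tfrac{5}{2} = -(\tfrac{3}{2}z^2+\tfrac{1}{2}z+\tfrac{11}{2})$; and the constants add to $0 + 4 + (-4z-2) = -4z+2$. I expect no genuine obstacle at this level---the disjointness of the three types is immediate from the case analysis, and the remaining work is bookkeeping. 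All of the real difficulty is absorbed in the three preceding claims, particularly the induction on the layered structure of $PH(2^z)$ underlying the type 1b count, where the \emph{perfect} hypothesis (the leftmost point of each layer lying in the lower sublayer) is essential for obtaining a closed-form expression rather than merely the asymptotic $O(n^2)$ bound from Subsection~\ref{subsection:qudratic-upper-boundApen}.
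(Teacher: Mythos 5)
Your proposal is correct and coincides with the paper's own proof: the paper likewise notes that every \projective $3$-hole is either an \affine $3$-hole or (since $m=1$ is the only possibility for a $3$-wedge) a type~1a or type~1b $3$-hole relative to its base layer, and simply sums Claims~\ref{claim:affine-3-holes}--\ref{claim:type1b-3-holes}. Your consolidation of the coefficients, giving $\tfrac{17}{4}\,2^{2z} = 2^{2z+2}+2^{2z-2}$, the $2^z$-coefficient $-\bigl(\tfrac32 z^2+\tfrac12 z+\tfrac{11}{2}\bigr)$, and the constant $-4z+2$, matches the stated formula exactly.
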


	\begin{proof}[Proof of Claim~\ref{claim:affine-3-holes}]
		Let $H$ be a Horton set of size $2^z$.
		First, we determine the number of basic affine $3$-holes. 
		The affine $3$-holes of $H$ with one point in $U=U(H)$ and two points in $L=L(H)$ are formed exactly by triples consisting of a point of $U$ and an open 2-cup in $L$. 
		Thus, there are $|U|\cdot\openu_2(L)$ of them, where $\openu_2(X)$ denotes the number of open 2-cups in a set~$X$.
		Similarly, there are $|L|\cdot\opend_2(U)$ affine $3$-holes of $H$ with one point in $L$ and two points in $U$, where $\opend_2(X)$ denotes the number of open 2-caps in a set $X$.
		It follows that the number of basic affine $3$-holes in $H$ is
		$|U|\cdot\openu_2(L)+|L|\cdot\opend_2(U)=2\cdot 2^{z-1}\cdot\left(2^z-z-1\right)
		=2^{2z}-z 2^z - 2^z$.
		Similarly, every layer $X$ of $H$ of size $2^i$ has $ 2^{2i}-i 2^i - 2^i $
		affine $3$-holes with base $X$.
		Since $H$ has exactly $ 2^{z-i} $ layers of size $2^i$ for each $i=2,\dots,z$, we have
		\begin{align*}
			h_3(H)
			&=\sum_{i=2}^z  2^{z-i} \left(2^{2i}-i 2^i - 2^i\right)
			=\sum_{i=2}^z  \left(2^{z+i}-i 2^z - 2^z\right)
			\\
			&=\left( 2^{2z+1}-2^{z+2} \right)-\left(\frac{z^2+z}2 -1 \right)2^z
			-(z-1)2^z
			=2^{2z+1}- \left(\frac{z^2}2 + \frac{3z}2 +2\right)2^z.
		\end{align*}
	\end{proof}
	
	\begin{proof}[Proof of Claim~\ref{claim:type1a-3-holes}]
		Let $H$ be a Horton set of size $2^z$.
		We first compute the number of basic type 1a projective $3$-holes in $H$.
		Type 1a $3$-holes with one point in $U$ and two points in $L$ are formed exactly by triples consisting of a point of $U$ and an open 2-cap in $L$. Their number is
		$|U|\cdot\opend_2(L)=2^{z-1}\cdot\left(2^z-z-1\right)$. 
		Similarly, type 1a $3$-holes with two points in $U$ and one point in $L$ are formed exactly by triples consisting of a point of $L$ and an open 2-cup in $U$.
		Their number is $|L|\cdot\openu_2(U)=2^{z-1}\cdot\left(2^z-z-1\right)$.
		Hence the number of basic type 1a projective $3$-holes in $H$ is
		$2\cdot 2^{z-1}\cdot\left(2^z-z-1\right)
		=2^{2z}-z 2^z - 2^z$.
		
		Consider now a layer $X$ of $H$ of size $2^i$ where $i\ne z$. We now compute the number of basic type 1a projective $3$-holes in $X$. First, we compute the number of these holes with one point in $U(X)$ and two points in $L(X)$. 
		Let $\{\alpha\}$ and $\{\beta, \gamma\}$ be the two chains of our 3-hole.
		By Proposition~\ref{prop:skew}, the point $\alpha$ does not lie between $\beta$ and $\gamma$ in the left-to-right order.
		Note that a 2-cap with points $\beta,\gamma$ of $L(X)$ is open if an only if it is a consecutive (in the left-to-right order) pair of points in one of the
		sets $L(X),L(L(X))=L^2(X),\dots,L^{z-1}(X)$. 
		Let $j\in\{1,\dots,i-1\}$, and let $\{x,y\}$ be a pair of consecutive (in the left-to-right order) points of~$L^j(X)$. Exactly $2^{j-1}$ points of $U(X)$ 
		lie between $x$ and $y$ (in the left-to-right order). The pair $\{x,y\}$
		forms a skew type 1a projective hole with each of the remaining
		$2^{i-1}-2^{j-1}$ points of $U(X)$.
		Since there are $|L^j(X)|-1=2^{i-j}-1$ consecutive pairs $\{x,y\}$ in $L^j(X)$,
		the number of basic skew type 1a projective $3$-holes of $X$ containing one point in $U(X)$ and two points in $L(X)$ is
		\[
		\sum_{j=1}^{i-1} (2^{i-j}-1)\cdot (2^{i-1}-2^{j-1}).
		\]
		By symmetry, we have the same number of 
		basic type 1a projective $3$-holes of $X$ containing two points in $U(X)$ and one point in $L(X)$. Hence the number of basic skew type 1a projective $3$-holes in $X$ is
		\[
		2\cdot\sum_{j=1}^{i-1} (2^{i-j}-1)\cdot (2^{i-1}-2^{j-1})
		=\sum_{j=1}^{i-1}\left( 2^{2i-j} + 2^j -2\cdot 2^i \right)
		\]
		\[
		=\left(2^{2i}-2^{i+1}\right) + (2^i-2) -(i-1)2\cdot2^i
		=2^{2i}-(2i-1)2^i-2.
		\]
		
		Summing up over all layers, we conclude that the total number of type 1a projective $3$-holes is
		\[
		(2^{2z}-z 2^z - 2^z) + \sum_{i=2}^{z-1}  2^{z-i}\left( 2^{2i}-(2i-1)2^i-2\right)
		\]
		\[
		=(2^{2z}-z 2^z - 2^z) + \sum_{i=2}^{z-1} \left(2^{z+i} -(2i-1)2^z - 2\cdot  2^{z-i} \right)
		\]
		\[
		=(2^{2z}-z 2^z - 2^z) + (2^{2z} - 2^{z+2} ) - (z^2-2z)2^z - (2^z-4)
		\]
		\[
		= 2^{2z+1} + (-z^2+z-6)2^z + 4.
		\]
	\end{proof}
	
	Before proving Claim~\ref{claim:type1b-3-holes} we give two definitions and two auxiliary claims.
		Let $x$ and $y$ be two points in a Horton set $H$ and let $x$ lie to the left of $y$. We say that the pair $\{x,y\}$ is \emph{open up}\&\emph{down} if no point of $H$ to the left of $x$ lies above the line $\overline{xy}$ and no point of $H$ to the right of $y$ lies below the line $\overline{xy}$; see part~(a) of Figure~\ref{fig-forbidden}.
	In the context of the (potentially) open up\&down pair $\{x,y\}$, the unbounded region bounded from below by the line $\overline{xy}$ and from right by the vertical line trough $x$
	is called \emph{the left forbidden region}, and the region bounded from above by the line $\overline{xy}$ and from left by the vertical line trough $y$ is called \emph{the right forbidden region}. 
	Similarly, we say that the pair $\{x,y\}$ is \emph{open down}\&\emph{up} if no point of $H$ to the left of $x$ lies below the line $\overline{xy}$ and no point of $H$ to the right of $y$ lies above the line $\overline{xy}$; see part~(b) of Figure~\ref{fig-forbidden}.
	In the context of the (potentially) open down\&up pair $\{x,y\}$, the unbounded region bounded from above by the line $\overline{xy}$ and from right by the vertical line trough $x$
	is called \emph{the left forbidden region}, and the region bounded from below by the line $\overline{xy}$ and from left by the vertical line trough $y$ is called \emph{the right forbidden region}.
	
	\begin{figure}[htb]
		\centering
		\includegraphics{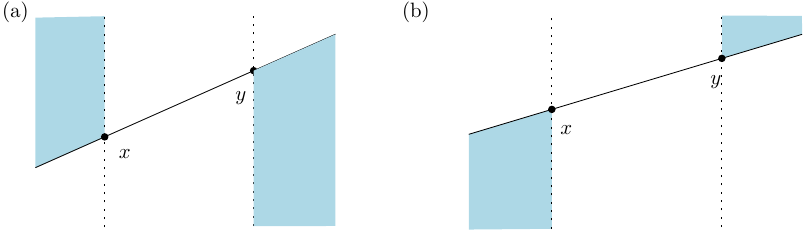} 
		\caption{An example of a pair $\{x,y\}$ that is (a) open up\&down and (b) down\&up. The forbidden regions are denoted by blue color.}
		\label{fig-forbidden}
	\end{figure}
	
	We write $\openud(H)$ and $\opendu(H)$ for the number of point pairs in~$H$ which are open up\&down and down\&up, respectively. 
	Also, we set $\opendiag(H)=\openud(H)+\opendu(H)$.
	
	\begin{claim}\label{claim:basic1b}
		For any Horton set $H$ with $U:=U(H)$ and $L:=L(H)$, the number of basic type 1b $3$-holes in $H$ is equal to
		\[
		\opendiag(U)+\opendiag(L).
		\]
		Moreover, all of these holes are skew.
	\end{claim}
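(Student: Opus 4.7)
The plan is to analyze the structure of the double chain 3-wedge underlying each basic type 1b 3-hole and establish a bijection between these 3-holes and opendiag pairs in the two layers of~$H$. First I fix notation: by the definition of basic type 1b, the 3-hole has two vertices in one layer of~$H$ and one in the other, and the apex~$s_1$ (the singleton in~$A$) lies in the majority layer. I will handle the case of two $U$-vertices $u, u'$ (with $u$ to the left of~$u'$) and one $L$-vertex~$\ell$ in detail; the case of two $L$-vertices is entirely symmetric.

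The main step is to analyze, for each of the two possible choices of apex, when the 3-wedge~$W$ is empty of $H$-points. By Observation~\ref{obs-holes}, $W$ is a projective 3-gon distinct from the affine triangle spanned by $u, u', \ell$, and in the plane it splits into two disjoint pieces: the \emph{cone at~$s_1$} (the sector opposite the triangle at~$s_1$, bounded by extensions of the two edges incident to~$s_1$) and the \emph{base region} (past the edge $\overline{r_1 r_2}$, on the opposite side from~$s_1$, bounded by extensions of the other two edges). Using that~$L$ lies deep below~$U$, the edges $\overline{u\ell}$ and $\overline{u'\ell}$ are nearly vertical while $\overline{uu'}$ has moderate slope. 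For $s_1=u$, I will show that, up to negligible corrections, the cone at~$u$ reduces to the open region lying to the left of~$u$ and above~$\overline{uu'}$, while the base region reduces to the open region to the right of~$u'$ and below~$\overline{uu'}$. Hence $W$ is empty of $H$-points if and only if $\{u,u'\}$ is open up\&down in the Horton set~$U$ (this governs the $U$-points in both pieces) and $\ell$ is the rightmost point of~$L$ (this governs the $L$-points in the base region, while the upper region contains no $L$-point since it lies above $\overline{uu'}$). For $s_1=u'$, a symmetric analysis yields that $W$ is empty if and only if $\{u,u'\}$ is open down\&up in~$U$ and $\ell$ is the leftmost point of~$L$.

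Since the rightmost and leftmost points of~$L$ are each unique, this establishes a bijection between basic type 1b 3-holes with two $U$-vertices and opendiag pairs in~$U$: each open up\&down pair yields one 3-hole (with $\ell$ rightmost) and each open down\&up pair yields one (with $\ell$ leftmost). The fully symmetric argument with the roles of~$U$ and~$L$ swapped yields a bijection between basic type 1b 3-holes with two $L$-vertices and opendiag pairs in~$L$. Summing the two counts gives $\opendiag(U)+\opendiag(L)$ basic type 1b 3-holes in~$H$.

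Finally, each such 3-wedge is skew: each of its affine pieces is bounded by $\overline{uu'}$ together with two of the nearly vertical lines $\overline{u\ell}, \overline{u'\ell}$, so the intersection of each piece with any vertical line is a bounded interval and~$W$ avoids the vertical point at infinity. The hard part will be to make the ``up to negligible corrections'' step rigorous: one must verify that the thin slivers of discrepancy between the idealized pieces (treating $\overline{u\ell}, \overline{u'\ell}$ as exactly vertical) and the true pieces contain no $H$-point. This follows from the deep-below property of Horton sets, since the width of each such sliver is proportional to the ratio of a layer's $y$-extent to the $U$-$L$ depth separation, which can be made arbitrarily small.
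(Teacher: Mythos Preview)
Your approach is the same as the paper's: split the type~1b $3$-holes with two $U$-vertices by which $U$-vertex is the apex, and show that the two cases correspond bijectively to open~up\&down and open~down\&up pairs in~$U$, with $\ell$ forced to be the rightmost (respectively leftmost) point of~$L$. The symmetric count for~$L$ and the skewness claim also match.

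There is one place where your justification is not quite sound. Your ``sliver'' argument at the end says the discrepancy regions can be made arbitrarily small---but $H$ is a fixed Horton set, so nothing can be ``made'' small, and smallness alone would not prevent an $H$-point from lying there. Relatedly, from your stated approximation (base region $\approx$ right of~$u'$ and below~$\overline{uu'}$) the natural conclusion for the $L$-condition would be ``no $L$-point lies to the right of~$u'$'', which is \emph{not} the condition ``$\ell$ is rightmost in~$L$'' that you (correctly) assert. The right way to use deep-below is as an exact orientation statement, not an approximation: for $\ell'\in L$, the Horton condition ``$u'$ lies above $\overline{\ell\ell'}$'' is literally the same triple orientation as ``$\ell'$ lies on the $W_B$-side of $\overline{u'\ell}$'', and this holds precisely when $\ell'$ is to the right of~$\ell$. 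Hence an $L$-point is in the interior of~$W_B$ iff it is to the right of~$\ell$, giving exactly ``$\ell$ rightmost'' with no slivers to patch. The same orientation trick handles the $U$-points in both pieces and gives the open up\&down condition in~$U$ directly. This is essentially the paper's (terse) argument; your outline is correct once the sliver heuristic is replaced by this exact sidedness reasoning.
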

	
	\begin{proof}
		Due to symmetry, it suffices to prove the following two statements:
		\begin{romanenumerate}
			
			\item \label{claim:basic1b:item1}
			the number of type 1b $3$-holes of $H$ with two points in $U$ and a single point in $L$ is equal to $\opendiag(U)$,
			
			\item \label{claim:basic1b:item2}
			all of them are skew.
			
		\end{romanenumerate}
		
		Let $A$ be a basic type 1b $3$-hole in $H$ with two points in $U$ and one point in $L$, and let $\{\alpha\}$ and $\{\beta,\gamma\}$ be its two chains. The point $\alpha$ lies in $U$ and without loss of generality we may assume that $\beta\in U$ and $\gamma\in L$.
		
		Suppose first that $\alpha$ lies to the left of $\beta$. Then $\gamma$ is the rightmost point of $L$, since otherwise the rightmost point of $L$ would lie
		in the interior of the projective $3$-hole $A$. Also, the pair $\{\alpha,\beta\}$ is open up\&down in $U$, since any point of $H$ lying in the forbidden region would lie in the interior of $A$.
		On the other hand, observe that if $\{\alpha,\beta\}$ is open up\&down in $U$
		and $\gamma$ is the rightmost point of $L$ then $A$ is type 1b $3$-hole in $H$.
		Moreover, $A$ is skew, since $\alpha$ lies to the left of $\gamma$.
		Thus, there are $\opendu(U)$ basic type 1b $3$-holes with $\alpha,\beta\in U,\gamma\in L$,
		and $\alpha$ lying to the left of $\beta$. All of them are skew. 
		
		A similar argument as above shows that there are $\openud(U)$ basic type 1b $3$-holes with $\alpha,\beta\in U,\gamma\in L$,
		and $\alpha$ lying to the right of $\beta$, and all of them are skew.
		
		Altogether, there are $\opendu(U)+\opendu(U)=\opendiag(U)$ type 1b $3$-holes with two points in $U$ and one point in $L$. All of them are skew.
		
		Analogously, there are $\opendiag(L)$ type 1b $3$-holes with one point in $U$ and two points in $L$. All of them are skew and the claim follows.
	\end{proof}
	
	\begin{claim}\label{claim:opendiag}
		For every $z\ge1$,
		\[
		\opendiag(CH(z))=2^{2z-2}+2z-1.
		\]
	\end{claim}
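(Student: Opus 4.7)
The plan is to prove the claim by induction on $z$, treating $H = PH(z)$ as $L \cup U$ where $L, U \cong PH(z-1)$ with $L$ deep below $U$; the perfect Horton property ensures that the leftmost point $p_1$ lies in $L$ and (since $|H|$ is even) the rightmost point $p_{2^z}$ lies in $U$. The base case $z=1$ is immediate: $PH(1)$ consists of two points forming a single pair that vacuously satisfies both open-up-and-down and open-down-and-up, so $\opendiag(PH(1)) = 2$, matching the formula. For the inductive step, I classify each pair $\{x, y\}$ by the layer membership of its endpoints and count its contribution.

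The key geometric input is that a line through a point of $L$ and a point of $U$ has very steep positive slope, so its leftward extrapolation falls below every point of $H$ and its rightward extrapolation rises above every point of $H$; this follows from the Horton-set property that the vertical gap between consecutive layers dominates the horizontal and vertical scale of each layer. Combined with the deep-below property for lines internal to a single layer, this yields:
\begin{itemize}
    \item A cross pair with $x \in L$, $y \in U$ is automatically open-down-and-up, and it is open-up-and-down exactly when $\{x, y\} = \{p_1, p_{2^z}\}$.
    \item A cross pair with $x \in U$, $y \in L$ is automatically open-up-and-down and never open-down-and-up.
    \item A pair entirely in $L$ is open-up-and-down only if $x = p_1$ (otherwise any $U$-point left of $x$ lies above $\overline{xy}$), and never open-down-and-up (which would force $y$ right of all of $U$, impossible).
    \item Symmetrically, a pair entirely in $U$ is open-up-and-down only if $y = p_{2^z}$, and never open-down-and-up.
\end{itemize}
A direct combinatorial count gives $2^{2z-3}+2^{z-2}$ cross pairs of the first type and $2^{2z-3}-2^{z-2}$ of the second, so the cross contributions total $2^{2z-2}+1$. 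Hence $\opendiag(H) = \gamma(L) + \delta(U) + 1 + 2^{2z-2}$, where $\gamma(X)$ counts pairs $\{p_1, y\}$ in $X$ such that no point of $X$ right of $y$ lies below $\overline{p_1 y}$, and $\delta(X)$ is the dual count anchored at the rightmost point of $X$.

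The main obstacle is the auxiliary identity $\gamma(PH(w)) = \delta(PH(w)) = w$, which I prove by a parallel inner induction with the same layer-decomposition strategy. Splitting the candidate $y$ in $\gamma(PH(w))$ by sublayer: if $y$ lies in the upper sublayer, the rightward-extrapolation behavior of $\overline{p_1 y}$ pushes the line high above $U$ beyond $y$, so any $L$-point right of $y$ lies below $\overline{p_1 y}$, forcing $y$ to be the rightmost point of $PH(w)$ and contributing exactly $1$. If $y$ lies in the lower sublayer, the condition on points right of $y$ reduces exactly to the $\gamma$-condition in that sublayer $\cong PH(w-1)$. This yields $\gamma(PH(w)) = \gamma(PH(w-1)) + 1$ with $\gamma(PH(0)) = 0$, so $\gamma(PH(w)) = w$; the symmetric argument handles $\delta$. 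Plugging $\gamma(PH(z-1)) + \delta(PH(z-1)) = 2(z-1)$ into the earlier identity gives $\opendiag(PH(z)) = 2(z-1) + 1 + 2^{2z-2} = 2^{2z-2} + 2z - 1$, as claimed. The extrapolation-based reasoning is the recurring technical tool, and the most delicate point is verifying that the Horton deep-below gap suffices for this extrapolation at every recursive level, which is precisely what the standard perfect Horton-set construction guarantees.
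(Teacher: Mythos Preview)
Your proof is correct and follows essentially the same approach as the paper: both split into cross pairs (contributing $2^{2z-2}+1$) and within-layer pairs (contributing $2(z-1)$), using the same steep-slope geometry for cross pairs and the same ``leftmost/rightmost anchor'' observation for within-layer pairs. The only difference is presentational: the paper directly characterizes the contributing within-$L$ pairs as $\{p_0,y\}$ with $y$ the rightmost point of $U(\Base\{p_0,y\})$ (one for each of the $z-1$ proper layers through $p_0$), whereas you package the same count into the auxiliary recursion $\gamma(PH(w))=\gamma(PH(w-1))+1$; also, your outer induction on $z$ is superfluous since you never invoke the inductive hypothesis for $\opendiag$.
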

	
	\begin{proof}
		First, we compute the contribution of basic pairs (that is, basic two-point subsets) in $CH(z)$ to the estimated quantity. Let $p_0,p_1,\dots,p_{2^z-1}$ be the points of $CH(z)$
		listed in the left-to-right order. Clearly, the pair $\{p_0,p_{2^z-1}\}$ is open both
		up\&down and down\&up. Thus, it contributes two to the computed quantity.
		We claim that any other pair $\{\ell,u\},\ell\in L,u\in U$, contributes exactly one.
		Indeed, if $\ell$ is to the left of $u$ then $\{\ell,u\}$ is open down\&up due to the definition of Horton sets, and it is not open up\&down since either $p_0$ is in the left forbidden region or $p_{2^z-1}$ is in the right forbidden region.
		Similarly, if $\ell$ is to the right of $u$ then $\{\ell,u\}$ is open up\&down, and it is not open down\&up since $p_0$ and $p_{2^z-1}$ lie in the forbidden regions. Thus, the contribution of the basic pairs in $CH(z)$ to the estimated quantity is $2+((2^{z-1})^2-1)= 2^{2z-2}+1$.
		
		Now, suppose that $\{x,y\}$ is a non-basic pair of $CH(z)$. We have $x,y\in L$ or $x,y\in U$. Suppose first that $x,y\in L$. Then $\{x,y\}$ is not open down\&up as the point $p_{2^z-1}\in U$ lies in the right forbidden region. Suppose now that $\{x,y\}$ is open up\&down. Then $x=p_0$
		since otherwise $p_1\in U$ lies in the left forbidden region. Now, let $A$
		be the base of $\{x,y\}=\{p_0,y\}$. Then $x=p_0\in L(A)$, $y\in U(A)$, and
		$y$ is the rightmost point of $U(A)$ since otherwise the rightmost point of $U(A)$
		lies in the right forbidden region. If $x,y$ satisfy the conditions so far then
		$\{x,y\}$ is indeed open up\&down, since $x=p_0$ is the leftmost point 
		of the whole set $CH(z)$, the point $y$ is the rightmost point of $A$ and each point of $CH(z)\setminus A$ is high above $A$ and thus also above the line $xy$.
		
		Since for each $i=1,\dots,z-1$ there is exactly one layer of size $2^i$ containing $p_0$, exactly $z-1$ pairs $\{x,y\}$ in $L$ contribute one to the computed quantity, and all other pairs $\{x,y\}$ in $L$ have no contribution.
		By symmetry, also exactly $z-1$ pairs $\{x,y\}$ in $U$ contribute one to the computed quantity, and all other pairs $\{x,y\}$ in $U$ have no contribution.
		
		Summarizing, the computed quantity equals $(2^{2z-2}+1)+2\cdot(z-1)=2^{2z-2}+2z-1$,
		as claimed.
	\end{proof}

	\begin{proof}[Proof of Claim~\ref{claim:type1b-3-holes}]
		For each $i=1,2,\dots,z$, $CH(z)$ has $2^{z-i}$ layers of size $2^i$. Each of them has
		$2\cdot \left( 2^{2(i-1)-2}+2(i-1)-1 \right) 
		= 2^{2i-3}+4i-6$
		basic type 1b $3$-holes due to Claims~\ref{claim:basic1b} and \ref{claim:opendiag}, and each of the is skew. Thus, there are
		\[
		\sum_{i=2}^z 2^{z-i} \cdot \left( 2^{2i-3}+4i-6 \right)
		= \sum_{i=2}^z 2^{z+i-3} +4i 2^{z-i} -6\cdot 2^{z-i}
		\]
		\[
		=\left( 2^{2z-2} - 2^{z-1} \right) 
		+ \left( 3\cdot 2^{z+1} -4z -8 \right) - \left( 3 2^z - 6 \right)
		\]
		\[
		= 2^{2z-2} + \frac52 2^z -4z -2 
		\]
		type 1b $3$-holes in $CH(z)$.
	\end{proof}

	\section{Proof of Proposition~\ref{proposition:projective5holes}}
	\label{sec:projective5holes_proof}

	We show that every set of at least 7 points contains a projective 5-hole and that the minimum numbers of 5-holes in a set of $n=5,6$ points in the real projective plane in general position satisfy $h_5^{p}(5)=1$ and $h_5^{p}(6)=0$.
	The proof of the first claim is inspired by a proof of Harborth~\cite{Harborth1978}, who showed that every set of 10 points in $\mathbb{R}^2$ contains a 5-hole.
	
	The equality $h_5^{p}(5)=1$ follows simply from the fact any set of five points  from $\RPP$ in general position can be projected to 5 points in convex position in the plane.
	Now, assume that there is a set $P$ of at least 6 points from $\RPP$ in general position that contains no projective 5-hole.
	We will show that then $|P|=6$, obtaining an example of a point set that gives $h_5^{p}(6)=0$.
	
	Since any set of five points  from $\RPP$ in general position can be projected to 5 points in convex position in the plane, there is at least one projective 5-gon in $P$.
	We say that a point $q$ is an \emph{interior point} of a projective 5-gon $Q$ in $P$ if there is a plane $\rho \subset \RPP$ containing $Q$ such that $Q$ is in convex position in $\rho$ and its convex hull contains $q$.
	Let $G$ be a projective 5-gon in~$P$ with the minimum number of interior points among all projective 5-gons in $P$.
	Since $P$ does not contain projective 5-holes, $G$ contains at least one interior point.
	
	If $G$ contains at least two interior points $q_1$ and $q_2$, then there are at least three points of $G$ on one side of the line $\overline{q_1q_2}$.
	These three points together with $q_1$ and $q_2$ determine a projective 5-gon in $P$ with fewer interior points than $G$, which is impossible by the choice of~$G$.
	Thus, $G$ contains exactly one interior point $q$.
	Let $p_0,\dots,p_4$ be the vertices of $G$ traced in the counterclockwise order.
	The point $q$ is then not contained in any triangle spanned by $p_i,p_{i+1},p_{i+2}$ (the indices are taken modulo 5), since otherwise we again obtain a projective 5-gon in $P$ with fewer interior points than $G$; see part~(a) of Figure~\ref{fig:proof_one_projective_5holes}.
	
	For every $i=0,\dots,4$, let $G_i$ be the projective 4-gon with vertices $q,p_i,p_{i+1},p_{i+2}$ (the indices are again taken modulo 5).
	Each $G_i$ is a projective 4-hole in $P$ as $q$ is the only interior point of~$G$.
	Since $P$ does not contain a projective 5-hole, no $G_i$ can be extended to a projective 5-hole in~$P$.
	In particular, this implies that for every $i=0,\dots,4$ no point of $P$ can lie
	\begin{itemize}
		\item 
		to the left of $\overline{p_ip_{i+1}}$ and to the left of $\overline{p_{i+2}q}$,
		\item 
		to the right of $\overline{p_ip_{i+1}}$ and to the right of $\overline{p_{i+2}q}$,
		\item 
		to the left of $\overline{qp_{i}}$ and to the left of $\overline{p_{i+1}p_{i+2}}$,
		\item 
		to the right of $\overline{qp_{i}}$ and to the right of $\overline{p_{i+1}p_{i+2}}$.
	\end{itemize}
	See part~(a) of Figure~\ref{fig:proof_one_projective_5holes} for an illustration of the forbidden regions for $i=0$.
	Altogether, these forbidden regions together with $G$ cover the whole real projective plane; see part~(b) of Figure~\ref{fig:proof_one_projective_5holes}.
	Hence, $P$ cannot contain any further point and we have $|P| = 6$. 
	Moreover, we see that the points $p_0,\dots,p_4,q$ give an example of 6 points in $\RPP$ in general position with no projective 5-hole, proving $h^p_5(6)=0$.
	
	\begin{figure}[htb]
		\centering
		\begin{subfigure}[t]{.5\textwidth}
			\centering
			\includegraphics[width=\textwidth,page=1]{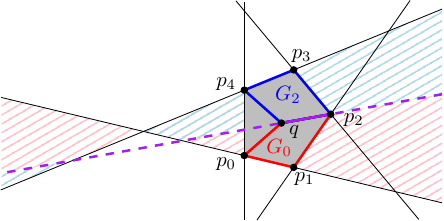}
			\caption{}
			\label{fig:proof_one_projective_5holes_1}
		\end{subfigure}
		\hfill
		\begin{subfigure}[t]{.4\textwidth}
			\centering
			\includegraphics[width=\textwidth,page=2]{figs/proof_one_projective_5holes}
			\caption{}
			\label{fig:proof_one_projective_5holes_2}
		\end{subfigure}
		
		\caption{A sketch of the proof of Proposition~\ref{proposition:projective5holes}. (a) The tiled regions cannot contain further points of~$P$. (b) The forbidden regions cover $\RPP$ outside of $G$. 
		}
		
		\label{fig:proof_one_projective_5holes}
	\end{figure}

	\section{Proofs of Theorems~\ref{thm:construction} and \ref{thm:construction2}}
	\label{sec:two_constructions_proofs}
	
	In this section, we deal with point sets in ${\mathbb{R}}^2\subset\RPP$.
	We explain all the constructions in the plane, and therefore, for the sake of simplicity, we use the notions \emph{affine hole} and \emph{projective hole} for an (affine) hole in the plane and for the intersection of a projective hole with ${\mathbb{R}}^2$, respectively. Thus, a non-affine projective hole is an empty double chain wedge.

	\subsection{Outline of the construction giving Theorems~\ref{thm:construction} and \ref{thm:construction2}}
	\label{sec:two_constructions_outline}

	First, we outline the construction giving both Theorems~\ref{thm:construction} and \ref{thm:construction2}.
	We are given a $k\in\{3,\dots,6\}$ and a positive integer~$n$.
	Our construction uses two integer parameters $a,b\ge2$ satisfying $a\le n^{1/3}$ and $ab\le n$.
	In the proof of Theorem~\ref{thm:construction}, these parameters depend on the value of the parameter $\alpha$ in the theorem.  For the proof of Theorem~\ref{thm:construction2},
	where we are given an integer parameter~$x$,
	we choose
	$a:=2$ and $b\approx \log_2 (x)$. 
	
	Assuming $\sqrt n$ is an integer, we start the construction with the $\sqrt n\times\sqrt n$ integer lattice in the plane, denoted by $L(\sqrt{n}\times \sqrt{n})$,
	and we fix a subset $C_3$ of $\Theta(n^{1/3})$ points in convex position in $L(\sqrt{n}\times \sqrt{n})$.
	We then perturb the lattice to get a so-called \emph{random squared Horton set},
	denoted by $H(\sqrt{n}\times \sqrt{n})$, which is a randomized version~\cite{BaranyValtr2004}
	of the lattice version of so-called 
	Horton sets~\cite{Valtr1992a}, which generalize
	the famous construction of Horton~\cite{Horton1983} of planar point sets in general position with no $7$-holes. The random squared Horton set is described in~\cite[Section~2]{BaranyValtr2004} and denoted by $\Lambda^*$ there.
	
	We consider the $|C_3|$-element subset $C_3^H$ of $H(\sqrt{n}\times \sqrt{n})$
	corresponding to $C_3$. Since $C_3$ is in convex position, the set $C_3^H$
	is also in convex position. We fix an $a$-element subset $C$ of~$C_3^H$, where $a$ is the above mentioned parameter.
	For each $c\in C$, we take a set $S_c$ of $b$ points lying in a very small neighborhood of $c$ and on a unit circle touching the polygon $\conv C_3^H$ in the point $c$. Since the points of $S_c$ are placed very close together on a unit circle, they are almost collinear.
	We consider the set $H(\sqrt{n}\times \sqrt{n})\cap\conv C_3^H$, 
	and denote its union with the sets $S_c,c\in C$, by $T=T(a,b)$; see Figure~\ref{fig:square_construction}.
	The set $T$ has at most $n+ab\le 2n$ points, 
	and it is just a little technicality to adjust its size to~$n$ at the right place in the proof. 
	
	\begin{figure}[htb]
		\centering
		\includegraphics{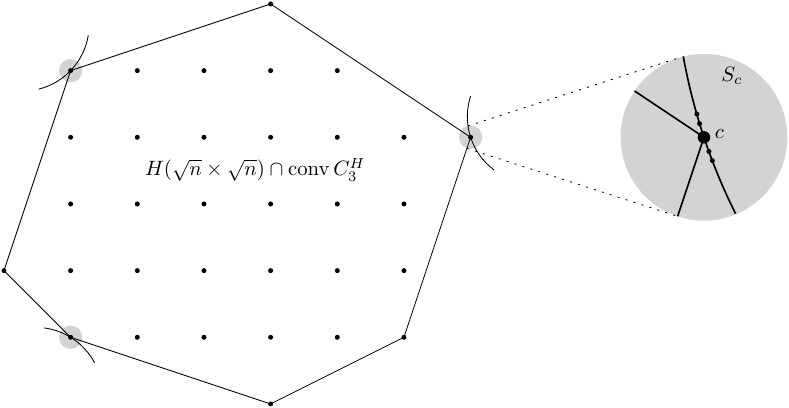} 
		\caption{An illustration of the set~$T(a,b)$ for $a=3$ and $b=5$ (we assume each $c$ lies in $S_c$).}
		\label{fig:square_construction}
	\end{figure}
	
	We now sketch a proof that the set $T$ satisfies Theorems~\ref{thm:construction} and \ref{thm:construction2} for properly chosen parameters $a$ and $b$.
	The random squared Horton set of size $n$ has $O(n^2)$ \affine  holes~\cite{BaranyValtr2004,Valtr1992a}.
	Likewise, using the condition $ab\le n$ and two additional facts, it can be argued that the set $T$ has at most $O(n^2)$ \affine holes that do not lie completely in some $S_c$. The two additional facts are that 
	(i)~the expected number of \affine holes containing a fixed point of $C$ is at most $O(n)$ and 
	(ii)~the expected number of \affine holes containing a fixed pair of points of $C$ is at most $O(n)$.
	The number of \affine $k$-holes that lie completely in one of the sets
	$S_c$ is clearly $a\binom{b}{k}<ab^k$. Thus, the total number of \affine $k$-holes in $T=T(a,b)$ is at most $O(n^2+ab^k)$.
	
	Due to the construction, any $(k-1)$-element subset of any set $S_c$, together with any point of $T\setminus S_c$, forms a \projective $k$-hole. There are $a$ sets $S_c$ and each of them has size $b$. Thus, there are
	at least $a\cdot\binom{b}{k-1}\cdot (|T|-b)=\Theta(ab^{k-1}n)$ \projective $k$-holes in $T$.
	
	Now, Theorem~\ref{thm:construction} is obtained from the above construction 
	by setting the parameters $a,b$ carefully with respect to $\alpha$. Namely, for $\alpha\in[0,\frac{2k-5}3]$ we set
	$a\approx n^{1/3}$ and $b:=n^{(5/3+\alpha)/k}$, and
	for $\alpha\in(\frac{2k-5}3,k-2]$ we set $a\approx n^{1-(1+\alpha)/(k-1)}$ and $ b:=n^{(1+\alpha)/(k-1)}$.
	We remark that in the range $\alpha\in[0,\frac{2k-5}3]$, the parameter $a$ corresponds to its maximum possible size which is the maximum size of a subset in the lattice $L(\sqrt{n}\times \sqrt{n})$ in convex position, and the parameter $b$ grows with~$\alpha$, since increased $\alpha$ allows bigger \affine holes.
	In the range $\alpha\in(\frac{2k-5}3,k-2]$, the parameter $b$ continues to grow with $\alpha$ but $a$ is decreasing to keep the size $ab$ of $S$ below~$n$.
	
	To obtain Theorem~\ref{thm:construction2} from the above construction, we set $a:=2$ and $b\approx\log_2x$.
	Then the number of \affine holes contained in one of the two sets $S_c$ is $\approx a2^b=\Theta(x)$ and the number of other \affine holes in $T$ is again in $O(n^2)$. Any subset of the $(ab=)2b$-element union of the two sets $S_c$ is in convex position or is a double chain, determining a \projective hole.
	Thus, $T=T(2,b)$ has at least $\Theta(2^{2b})=\Theta(x^2)$ \projective holes.

	\subsection{The parametrized construction in the plane}
	
	Here we describe in detail the (parametrized) construction in the plane which is used with different parameters in the proofs of Theorems~\ref{thm:construction} and \ref{thm:construction2}.
	
	For an integer $t$, $L(t\times t)$ denotes the square lattice
	$L(t\times t):=\{(x,y)\in \mathbb{Z}^2:0\le x,y\le t -1\}$ in the plane. It is well known
	that $L(t\times t)$ contains a subset of size $\Theta(t^{2/3})$ which is in convex position; for example, 
	see \cite{AcketaZunic1993}.
	Let $n\in \mathbb{N}$. We suppose for simplicity that $\sqrt{n}/3$ is an integer.
	We fix a set $C_1\subset L(\sqrt{n}/3\times \sqrt{n}/3)$ of size $\Theta(n^{1/3})$ which is
	in convex position. Let $C_3:=\{(3x,3y):(x,y)\in C_1\}$. The set $C_3$ is a subset of
	$L(\sqrt{n}\times \sqrt{n})$, it is in convex position, and its size is in 
	$\Theta(n^{1/3})$. The coordinates of points of $C_3$ are multiples of $3$ which guarantees the following fact.
	
	\begin{fact}\label{fact:C_3}
		For any three non-collinear points $p,q,r$ of $C_3$, there is a point
		in $ L(\sqrt{n}\times \sqrt{n})$ that lies in the interior of the triangle $pqr$.  
	\end{fact}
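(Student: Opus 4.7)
The plan is to exhibit an explicit point of $L(\sqrt{n}\times\sqrt{n})$ inside the triangle, namely the centroid $g:=(p+q+r)/3$. First I would verify that $g$ is an interior point of the triangle $pqr$: this is immediate from the non-collinearity assumption, since the centroid of any non-degenerate triangle lies strictly inside it (it is a convex combination of the three vertices with all positive weights $1/3$).

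Next I would check that $g$ has integer coordinates. Writing $p=(3x_p,3y_p)$, $q=(3x_q,3y_q)$, $r=(3x_r,3y_r)$ with $(x_p,y_p),(x_q,y_q),(x_r,y_r)\in C_1\subset L(\sqrt n/3\times\sqrt n/3)$, we have
\[
g=\left(x_p+x_q+x_r,\;y_p+y_q+y_r\right),
\]
which lies in $\mathbb{Z}^2$. This is the key use of the scaling factor $3$: the divisibility of each coordinate of every point of $C_3$ by $3$ guarantees that dividing the coordinate sum by $3$ returns to the integer lattice.

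Finally I would verify the range condition. Since $0\le x_p,x_q,x_r\le\sqrt n/3-1$ and similarly for the $y$-coordinates, both coordinates of $g$ lie between $0$ and $3(\sqrt n/3-1)=\sqrt n-3$, which is well within the bounds $0\le a,b\le\sqrt n-1$ defining $L(\sqrt n\times\sqrt n)$. Hence $g\in L(\sqrt n\times\sqrt n)$ lies in the interior of $pqr$, as required.

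There is essentially no obstacle here; the only subtlety is the choice to scale $C_1$ by a factor of $3$ in defining $C_3$, which is precisely what makes the centroid trick work. If the scaling factor had been omitted or replaced by a factor not divisible by $3$, the centroid would generally fail to be a lattice point, and one would have to resort to a less elegant argument (for example, via Pick's theorem applied to a triangle whose vertices are lattice points with large coordinates) to locate an interior lattice point.
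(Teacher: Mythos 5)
Your proof is correct and uses the same argument as the paper: the paper's one-line proof likewise observes that the centroid $\frac13(p+q+r)$ is a lattice point (thanks to the coordinates of $C_3$ being multiples of $3$) lying inside the triangle $pqr$. Your version merely spells out the interiority, integrality, and range checks that the paper leaves implicit.
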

	
	\begin{proof}
		The point $\frac13 (p+q+r)$ is a lattice point and it lies inside the triangle $pqr$.
	\end{proof}
	
	We now consider the \emph{random squared Horton set} $H(\sqrt{n}\times \sqrt{n})$
	obtained from $L(\sqrt{n}\times \sqrt{n})$ by a proper perturbation; see~\cite{BaranyValtr2004} for details. We use the following crucial facts about 
	$H(\sqrt{n}\times \sqrt{n})$.
	
	\begin{theorem}[\cite{BaranyValtr2004,Valtr1992a}]\label{thm:randomHset-no7holes}
		The set $H(\sqrt{n}\times \sqrt{n})$ has no  \affine $7$-holes in $\mathbb{R}^2$.
	\end{theorem}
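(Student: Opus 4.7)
The plan is to prove Theorem~\ref{thm:randomHset-no7holes} by exploiting the recursive structure of the random squared Horton set $H(\sqrt{n}\times\sqrt{n})$, which is obtained from $L(\sqrt{n}\times\sqrt{n})$ by a small random perturbation chosen so that each horizontal row becomes (a small-diameter copy of) a Horton set while the $\sqrt{n}$ rows themselves, viewed as a single sequence of ``fat points'', also satisfy the Horton deep-below/high-above condition at the appropriate larger scale. In particular, with the perturbation described in~\cite{BaranyValtr2004}, the set $H(\sqrt{n}\times\sqrt{n})$ is a Horton set in the sense of Section~\ref{sec:horton_sets_sketch}, after a suitable rotation/relabeling that turns the $\sqrt{n}\times\sqrt{n}$ array into a linear sequence with the required hierarchical structure. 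So the strategy reduces to verifying that any Horton set in the plane contains no \affine $7$-hole, which is the original theorem of Horton~\cite{Horton1983} as extended by Valtr~\cite{Valtr1992a}.

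First I would recall, from Section~\ref{sec:horton_sets_sketch}, the inductive definition of a Horton set and two key auxiliary facts used in the classical argument: any Horton set $H$ contains no open $4$-cup and no open $4$-cap. These two facts are proved by induction on $|H|$, by splitting $H=U(H)\cup L(H)$ and observing that an open $4$-cap (respectively $4$-cup) crossing both layers forces, via the deep-below/high-above property, a point of the upper (lower) layer to lie on the wrong side of a spanned line.

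Next I would prove the no-$7$-hole statement itself, again by induction on $|H|$. Let $A\subseteq H$ be a potential $7$-hole; write $A_0=A\cap L(H)$ and $A_1=A\cap U(H)$. If $|A_0|=0$ or $|A_1|=0$, then $A$ lies in a proper layer and the induction hypothesis applies. Otherwise, since the layers are separated, the lower part of the convex boundary of $A$ is a cap contained in $L(H)$ (and this cap is open in $L(H)$, as emptiness of $A$ forces no point of $L(H)$ to lie below it), and symmetrically the upper part of the boundary is an open cup in $U(H)$. By the no-open-$4$-cap and no-open-$4$-cup lemma, these two chains have at most $3$ vertices each, and since they share exactly the two extreme vertices of~$A$, we get $|A|\le 3+3-2+2=6$, contradicting $|A|=7$. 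This concludes the proof modulo the verification that the perturbation indeed produces a Horton set (and keeps the points in general position), which is the content of~\cite{BaranyValtr2004,Valtr1992a}.

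The main obstacle I expect is not the Horton-style induction, which is clean, but rather certifying that the perturbation step used in constructing $H(\sqrt{n}\times\sqrt{n})$ from $L(\sqrt{n}\times\sqrt{n})$ (i) removes all collinear triples, and (ii) preserves, at every scale, the strict deep-below/high-above separation between the inductively defined layers. The standard way to handle this is to choose nested perturbation ranges that decrease geometrically as one goes to finer layers, so that at every recursion step the gap between layers dominates the diameter of each layer by a large factor; the formal verification of this separation is the technical core of the argument in~\cite{BaranyValtr2004,Valtr1992a}, and I would cite it rather than reproduce it.
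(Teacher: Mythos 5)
There is a genuine gap, and it is the very first step. Your entire plan rests on the claim that $H(\sqrt{n}\times\sqrt{n})$ is itself a Horton set in the sense of Section~\ref{sec:horton_sets_sketch} ``after a suitable rotation/relabeling,'' but this is false, and provably so from the paper's own construction: in Appendix~\ref{sec:two_constructions_proofs} the set $C_3^H\subseteq H(\sqrt{n}\times\sqrt{n})$ is a perturbed convex lattice polygon, hence a subset in convex position of size $\Theta(n^{1/3})$, whereas a Horton set of size $n$ contains only $O(\log n)$ points in convex position. No relabeling (which preserves the order type) or rotation can reconcile these two facts. One can also see the failure directly: for the deep-below/high-above condition to hold between two halves of a perturbed grid, every point of the upper half would have to lie above every line spanned by the lower half, but a line through two lower-half points in different rows is steep and has upper-half points on both sides. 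The squared Horton set of \cite{BaranyValtr2004,Valtr1992a} is a genuinely two-directional object --- a Horton-type structure inside each nearly collinear row at a microscopic scale, combined with a second Horton-type structure governing the vertical sequence of rows --- and the no-$7$-hole proof for it is a correspondingly two-level argument (holes spanning many rows versus holes confined to few rows), not an instance of the one-dimensional Horton induction you run. Note also that the paper itself does not prove Theorem~\ref{thm:randomHset-no7holes}; it imports it from \cite{BaranyValtr2004,Valtr1992a}, and since your proposal delegates exactly the structural verification to those same references while replacing their actual argument by an invalid reduction, it does not constitute a proof even in outline.

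A secondary, fixable error: in your sketch of the classical ``Horton sets have no $7$-hole'' induction you have the orientations reversed. If $A$ is an empty convex polygon meeting both layers, the vertices in the lower layer $L(H)$ form the \emph{lower} chain of the polygon, which is a \emph{cup}, and emptiness (together with the upper-layer vertices lying high above) forces no point of $L(H)$ to lie \emph{above} it, i.e.\ it is an open cup in $L(H)$; symmetrically the upper-layer vertices form a cap with no point of $U(H)$ \emph{below} it. Your version (``the lower part \dots is a cap \dots no point of $L(H)$ to lie below it'') swaps cap/cup and the direction of openness, and your count $3+3-2+2=6$ via ``shared extreme vertices'' is not right either: the two chains lie in disjoint layers and share no vertices, so the bound is simply $|A\cap L(H)|\le 3$ and $|A\cap U(H)|\le 3$, giving $|A|\le 6$. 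These slips could be repaired, but the reduction of the squared Horton set to a plain Horton set cannot.
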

	
	\begin{theorem}[\cite{BaranyValtr2004}]\label{thm:randomHset-numberofholes}
		The expected number of \affine holes in the set $H(\sqrt{n}\times \sqrt{n})$ in $\mathbb{R}^2$ is~$\Theta(n^2)$.
	\end{theorem}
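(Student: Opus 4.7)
The plan is to prove matching lower and upper bounds on the expected number of affine holes.

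For the lower bound $\Omega(n^2)$, I would apply the classical B\'ar\'any--F\"uredi theorem stating that every set of $n$ points in general position in the plane determines at least $\Omega(n^2)$ 3-holes. With probability one, the perturbation used to construct $H(\sqrt{n}\times\sqrt{n})$ yields a set in general position, so the expected number of 3-holes alone is already $\Omega(n^2)$, and hence so is the expected number of all \affine holes.

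For the upper bound, the crucial observation is Theorem~\ref{thm:randomHset-no7holes}: the set $H(\sqrt{n}\times\sqrt{n})$ contains no 7-holes, so every \affine hole has at most $6$ vertices. It therefore suffices to prove, for each fixed $k\in\{3,4,5,6\}$, that the expected number $E_k(n)$ of $k$-holes is $O(n^2)$. I would exploit the recursive layered structure of the squared Horton set: the set decomposes into two halves of size $n/2$ that are themselves (scaled) random squared Horton sets, and any $k$-hole is either entirely contained in one half or uses points from both. Writing $C_k(n)$ for the expected number of the latter ``cross-holes'' gives a recurrence $E_k(n) \le 2 E_k(n/2) + C_k(n)$, which telescopes to $E_k(n)=O(n^2)$ provided one can show $C_k(n)=O(n^2)$.

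The main obstacle is establishing $C_k(n)=O(n^2)$ for each $k\le 6$. I would mimic the case analysis carried out in Subsection~\ref{subsection:qudratic-upper-boundApen}: writing a cross-hole's vertices as their projections onto the two layers, the deep-below/high-above separation forces the vertices within each layer either to be extremal (leftmost or rightmost) or to form an open cap or open cup in that layer. The fact that in a (random) squared Horton set every open cap and every open cup has length at most $3$, together with an $O(n)$ expected bound on the number of such configurations of length $2$ or $3$, combines with $O(n)$ choices from the other layer to give $O(n^2)$ cross-holes per recursion step. The probabilistic subtlety---that openness is a random event whose probability depends on the joint distribution of the perturbations---is handled by exploiting the product structure of the perturbation, so that each cap or cup contributes with bounded expected multiplicity. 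Summing the resulting bounds over $k\in\{3,4,5,6\}$ and over the $O(\log n)$ recursion levels yields the claimed $O(n^2)$ upper bound on the expected total number of \affine holes.
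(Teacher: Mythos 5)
Your lower-bound paragraph is fine (any realization is in general position almost surely, and the $\Omega(n^2)$ bound on empty triangles holds deterministically), but note first that the paper does not prove this statement at all: Theorem~\ref{thm:randomHset-numberofholes} is imported verbatim from~\cite{BaranyValtr2004}, so the only meaningful comparison is with the proof there --- and your upper-bound argument rests on a structural assumption that is false for $H(\sqrt{n}\times\sqrt{n})$. The deep-below/high-above recursive layering you invoke is the defining property of \emph{one-dimensional} Horton sets; the random squared Horton set is an arbitrarily small perturbation of the $\sqrt{n}\times\sqrt{n}$ grid, and its natural recursive pieces are the four parity sublattices (each of size $n/4$, not two halves of size $n/2$), which are spatially \emph{interleaved}, not separated: no line through two points of one sublattice has the entire other sublattice on one side. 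Consequently the open-cap/open-cup case analysis of Subsection~\ref{subsection:qudratic-upper-boundApen}, which you plan to mimic for the ``cross-holes,'' has no analogue here --- the vertices of a hole meeting two sublattices are not forced to be extremal points or open caps/cups of their sublattice --- and the key step $C_k(n)=O(n^2)$ in your recurrence is unsupported. Even the statement ``every open cap and open cup has length at most $3$'' is a theorem of~\cite{Valtr1992a} about Horton sets relative to their layer structure; it does not transfer to the grid-like macrostructure of $H(\sqrt{n}\times\sqrt{n})$.

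What actually controls the upper bound in~\cite{BaranyValtr2004} is lattice geometry, which your proposal never engages with. Because the perturbation is tiny, an \affine hole of macroscopic diameter in $H(\sqrt{n}\times\sqrt{n})$ corresponds to a convex polygon with vertices at lattice points of $L(\sqrt{n}\times\sqrt{n})$ containing no (or boundedly many) further lattice points; Pick-type area arguments bound the number of such lattice configurations by $O(n^2)$, and the randomness of the hierarchical perturbation is needed precisely to handle the near-degenerate configurations with many collinear lattice points (a segment of $m$ collinear grid points could a priori generate $\Theta(m^2)$ empty triangles, and only the Horton-like micro-perturbation, in expectation, reduces this to an acceptable count --- this is also why the conclusion is about the \emph{expected} number, unlike Theorem~\ref{thm:randomHset-no7holes}, which holds surely). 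Your one-sentence remark that the ``probabilistic subtlety \dots is handled by exploiting the product structure of the perturbation'' is exactly where this entire mechanism would have to live, so as it stands the proposal has a genuine gap: it replaces the lattice-emptiness counting, which is the heart of the matter, by a layered-recursion argument that the set does not support.
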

	
	For each point $(i,j)\in L(\sqrt{n}\times \sqrt{n})$, we denote by $h_{ij}$ the counterpart of $(i,j)$ in $H(\sqrt{n}\times \sqrt{n})$ (lying in a small neighborhood of $(i,j)$). We set
	\[
	C_3^H:=\{h_{ij}\in H(\sqrt{n}\times \sqrt{n}): (i,j)\in C_3\}.
	\]
	
	Since $C_3$ is in convex position and $H(\sqrt{n}\times \sqrt{n})$ is a perturbation of $L(\sqrt{n}\times \sqrt{n})$,
	the set $C_3^H$ 
	is also in convex position.
	
	We set 
	\[
	H:=H(\sqrt{n}\times \sqrt{n})\cap \conv C_3^H.
	\]
	
	We further fix a subset $C'$ of $C_3^H$ of size $\lfloor|C_1|/2\rfloor= \lfloor C_3^H/2\rfloor\in\Theta(n^{1/3})$, such that $C'$ contains no pair of consecutive vertices of $\conv C_3^H$.
	For each point $c\in C'$, we fix a disk $D_c$ touching the polygon $\conv C_3^H$
	in the point $c$. Moreover, we choose $D_c$ in such a way that the line touching
	$D_c$ in the point $c$ contains no other point of $C_3^H$.
	
	Let $a,b$ be two positive integers with $a\le |C'|$ and $ab\le n$.
	We fix an arbitrary subset $C$ of~$C'$ of size $a$.
	For each point $c\in C$,
	we fix a set $S_c$ of $b$ points lying on the boundary of $D_c$ in a small neighborhood
	of $c$, whereas $c$ is one of the points of $S_c$.
	We set 
	\[
	S:=\bigcup_{c\in C} S_c \;\;\;\text{ and } \;\;\;
	T:=S\cup H. 
	\]
	Note that $T$ is the disjoint union of the sets $S$ and $H\setminus C$.
	
	The points in each $ S_c $ are chosen so close to $c$ so
	that the following two properties hold.
	
	\begin{enumerate}
		\setcounter{enumi}{0}
		\item \label{construction-item-P1}
		The order type of $H$ does not change if, for each $c\in C$, in $H$ we possibly replace $c$ by any other point of $S_c$.
		
		\item \label{construction-item-P2}
		For any two points $h,h'\in S_c$, all points of the set $T\setminus S_c$ lie on the same side of the line $hh'$.
	\end{enumerate}
	
	\noindent
	We then also have the following properties.
	
	\begin{enumerate}
		\setcounter{enumi}{2}
		\item \label{construction-item-P3}
		If $c\in C$ and $t\in T\setminus S_c$ then $S_c\cup\{t\}$ is a (non-affine) projective hole with the two chains $S_c$ and $\{t\}$.
		
		\item \label{construction-item-P4}
		Every affine hole in $T$ intersects at most two sets $S_c$. 
		This follows from Fact~\ref{fact:C_3}.
		
		\item \label{construction-item-P5}
		If an affine hole in $T$ is not a subset of a set $S_c$ then it intersects $S_c$ in at most two points. Moreover, such two points must be consecutive points of $S_c$ along the (shortest) boundary arc of $D_c$ containing all points of $S_c$. 
		
		\item \label{construction-item-P6}
		Let $X$ be an affine hole of $T$. Let $X'$ be a subset of $H$ obtained from $X$ by replacing, for any $c\in C$, every point in $X\cap(S_c\setminus\{c\})$ by the point $c$ and then removing all multiplicities. Then $X'$ is a hole in $H$.
	\end{enumerate}
	
	We further have:
	
	\begin{enumerate}
		\setcounter{enumi}{6}
		\item \label{construction-item-P7}
		$|T|=\Theta(n)$ and $|C|,|C_1|=\Theta(n^{1/3})$.
	\end{enumerate}
	
	For the proofs of Theorems~\ref{thm:construction} and \ref{thm:construction2} we will also need the following two lemmas.
	
	\begin{lemma}\label{lma:onepoint}
		For any $(a,b)\in L(\sqrt n\times\sqrt n)$, the expected number of holes of $H$ containing the point $h_{ab}$ is $O(n)$.
	\end{lemma}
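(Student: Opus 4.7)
By Theorem~\ref{thm:randomHset-no7holes}, every affine hole in $H\subseteq H(\sqrt n\times\sqrt n)$ has at most six vertices, so it suffices to bound, for each $k\in\{3,4,5,6\}$ separately, the expected number of $k$-holes of $H$ containing $h_{ab}$. By linearity of expectation, this expected number equals
$$
\sum_{T}\Pr\bigl[T\text{ forms an empty }k\text{-hole of }H\text{ containing }h_{ab}\bigr],
$$
where $T$ ranges over $k$-element subsets of $L(\sqrt n\times\sqrt n)$, identified with the perturbed set $\{h_t:t\in T\}$ in $H(\sqrt n\times\sqrt n)$.

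Since the perturbation magnitude defining $H(\sqrt n\times\sqrt n)$ is minuscule, the summand is only non-negligible when the unperturbed $k$-tuple $T$ satisfies two essentially deterministic conditions: $\conv T$ contains $(a,b)$, and $\conv T$ contains only very few lattice points of $L(\sqrt n\times\sqrt n)$ in its interior other than $(a,b)$, since any lattice point deep inside $\conv T$ survives the perturbation almost surely and witnesses non-emptiness. Thus, up to the constant-order probabilities coming from the perturbation, the estimate reduces to counting such almost-empty lattice $k$-tuples through $(a,b)$.

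The heart of the argument is the following counting claim, which is the main obstacle: \emph{for each $k\in\{3,4,5,6\}$, the number of $k$-element subsets of $L(\sqrt n\times\sqrt n)$ whose convex hull contains the fixed lattice point $(a,b)$ and is almost empty in the above sense is $O(n)$.} This is a fixed-interior-point refinement of the estimate behind Theorem~\ref{thm:randomHset-numberofholes}, which yields $O(n^2)$ empty $k$-tuples without the containment constraint; anchoring one interior lattice point at $(a,b)$ is expected to save one factor of $n$. I would establish the claim via a Pick/area dichotomy: either $\conv T$ has bounded area, in which case summing over base lengths and slopes gives $O(n)$ positions through $(a,b)$; or $\conv T$ is long and thin with bounded width in some rational direction (forced by having few interior lattice points), and then fixing this direction again yields an $O(n)$ count. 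Summing the contributions over $k\in\{3,4,5,6\}$ completes the proof.
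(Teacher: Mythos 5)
There is a genuine gap, and it begins with the reading of the statement. In this lemma, ``holes of $H$ containing the point $h_{ab}$'' means holes having $h_{ab}$ as one of their \emph{points} (a vertex): this is the quantity needed in the proof of Lemma~\ref{lma:construction-holes}, where a hole of type III or IV meets a cluster $S_c$ and, after the replacement permitted by property~\ref{construction-item-P1}, becomes a hole of $H$ with $c=h_{ij}$ among its vertices; the paper's own proof confirms this reading by counting each hole at most $6$ times, once per vertex. You instead bound holes whose convex hull contains $(a,b)$ as an \emph{interior} lattice point (``anchoring one interior lattice point at $(a,b)$''). That is a different statement, and in the relevant situation it is vacuous: a hole of $H$ is empty in $H$, so it cannot contain the point $h_{ab}\in H$ in its interior. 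Your first-moment computation is therefore aimed at the wrong event and cannot be plugged into the application.

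Even after repairing the event to ``$h_{ab}$ is a vertex,'' the argument does not close, because the step discarding the perturbation probabilities as ``constant-order'' is unsound, and the deferred counting claim is both unproven and false as stated. The typical holes of a random squared Horton set arise from \emph{near-collinear} lattice tuples, which have zero-area convex hull and hence trivially satisfy your deterministic ``almost empty'' condition; already for $k=3$, the number of collinear lattice triples through a fixed central point of $L(\sqrt n\times\sqrt n)$ is $\Theta(n\log n)$ (sum over primitive directions $d$ of norm about $r$: there are $\Theta(r^2)$ such directions and $\Theta(n/r^2)$ pairs per line, giving $n\log n$ over dyadic scales). What brings the expectation down to $O(n)$ is precisely the emptiness probability, which decays geometrically with the number of lattice points straddled by the tuple; it is not a constant, and estimating it is the entire quantitative content of the B\'ar\'any--Valtr analysis (compare Lemma~\ref{lma:twopoints}, which invokes their Lemma~5, and Theorem~\ref{thm:randomHset-numberofholes}, which is itself a nontrivial theorem). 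Your Pick/flatness dichotomy counts deterministic configurations only, so it cannot recover the lost factor. The paper avoids all of this with a short double-counting argument: it embeds the picture into the central block of $H(3\sqrt n\times3\sqrt n)$ (which also handles boundary effects), observes that by the construction the expected number of holes at $h_{ab}$ is dominated by the expected number at the corresponding central point of the larger set, and then averages over the $n$ central lattice points, using that the larger set has expected $O(n^2)$ holes in total and each hole has at most $6$ vertices by Theorem~\ref{thm:randomHset-no7holes}; hence the average, and so each fixed value, is $O(n)$.
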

	\begin{proof}
		
		Suppose that $H(3\sqrt n\times3\sqrt n)$ is constructed in the analogous way as $H(\sqrt n\times\sqrt n)$.
		The set $H(3\sqrt n\times3\sqrt n)$ has at most $O(n^2)$ holes, each having at most $6$ vertices.
		Consider two (not necessarily different) lattice points $(a,b),(c,d)\in L(\sqrt n\times\sqrt n)$.
		Then, due to the construction of random squared Horton sets, the expected number of holes in $H(\sqrt n\times\sqrt n)$ containing the point $h_{ab}$ is smaller or equal to the expected number
		of holes in $H(3\sqrt n\times3\sqrt n)$ containing the point $h_{c+\sqrt n,d+\sqrt n}$.
		The average expected number of holes in $H(3\sqrt n\times3\sqrt n)$ containing the point $h_{c+\sqrt n,d+\sqrt n}$, where we average over all $n$ lattice points $(c,d)\in L(\sqrt n\times\sqrt n)$, is at most $O(n)$, since otherwise $H(3\sqrt n\times3\sqrt n)$ would contain more than $n\cdot O(n)/6=O(n^2)$ holes.
		Thus, also the expected number of holes in $H(\sqrt n\times\sqrt n)$ containing the point $h_{ab}$
		is at most $O(n)$ for each fixed $(a,b)$.
	\end{proof}
	
	\begin{lemma}\label{lma:twopoints}
		Let $c,c'$ be two different points of $C$. Then the expected number of holes in~$H$ containing both $c$ and $c'$ is $O(1)$.
	\end{lemma}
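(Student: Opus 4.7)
The plan is to combine the size bound for holes in the random squared Horton set with the structural rigidity imposed by $c$ and $c'$ being vertices of $\conv H$. By Theorem~\ref{thm:randomHset-no7holes}, every hole of $H$ has size at most~$6$, so I would only need to bound the expected number of holes of sizes $k\in\{3,4,5,6\}$ containing both $c$ and $c'$. Moreover, since $c,c'\in C_3^H$ are two of the vertices of $\conv H$, in any hole $X\ni c,c'$ these points are vertices of $\conv X$, and the chord $cc'$ splits the remaining $k-2\le 4$ vertices of $X$ into two chains (possibly with one of the chains empty, in which case $cc'$ is an edge of $X$).

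Next, I would invoke the recursive layer structure of $H(\sqrt{n}\times\sqrt{n})$: every hole~$X$ has a unique \emph{base}, namely the smallest layer containing all vertices of~$X$. For a hole containing both $c$ and $c'$, the base must be a layer containing both of these points; let $B_0$ be the smallest such layer, and observe that the possible bases of $X$ are the $O(\log n)$ ancestors of $B_0$ in the recursive layer tree. Within each candidate base~$B$, the hole~$X$ splits across the upper and lower sub-layers of~$B$, and $c,c'$ occupy fixed positions inherited from the fixed lattice positions of their counterparts in $L(\sqrt{n}\times\sqrt{n})$.

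The counting step would then follow the combinatorial/probabilistic scheme used by B\'ar\'any and Valtr in the proof of Theorem~\ref{thm:randomHset-numberofholes}. For a fixed base~$B$ and each of the $O(1)$ combinatorial types of $k$-hole with $k\le 6$ and two vertices pinned at $c,c'$, I expect the expected number of holes of that type to decay geometrically as the base~$B$ moves up the layer tree, because a larger base enlarges the region whose emptiness must be certified while leaving only a bounded number of free vertex positions. Summing the contributions over the $O(\log n)$ candidate bases and the $O(1)$ types would then yield a geometric series bounded by $O(1)$.

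The hard part will be verifying the geometric decay rate of the expected counts along the ancestor chain of $B_0$ in the layer tree; this requires adapting the probabilistic analysis of the squared Horton perturbation from~\cite{BaranyValtr2004} to the setting where two specific extremal vertices of $\conv H$ are fixed in advance, rather than averaging over arbitrary vertex tuples as in the proof of Theorem~\ref{thm:randomHset-numberofholes}.
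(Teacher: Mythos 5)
Your scaffolding is sound as far as it goes: holes have at most $6$ vertices by Theorem~\ref{thm:randomHset-no7holes}, the base of any hole through $c$ and $c'$ must be one of the $O(\log n)$ layers containing both points, and there are only $O(1)$ combinatorial types per base. But the load-bearing step of your argument --- the geometric decay of the expected count along the ancestor chain of $B_0$ --- is exactly the content of the lemma, and you explicitly leave it unproven (``I expect the expected number \dots to decay geometrically''). Without that decay you get at best $O(\log n)$ from a per-base $O(1)$ bound, and you have not even established the per-base bound; the heuristic ``a larger base enlarges the region whose emptiness must be certified'' is not a proof, since the perturbation randomness of the squared Horton set is what must certify that emptiness, and quantifying it is precisely the probabilistic work the lemma requires. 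Note also that the averaging-plus-translation trick that proves Lemma~\ref{lma:onepoint} (comparing against a centered point in $H(3\sqrt n\times3\sqrt n)$ and averaging the $O(n^2)$ total hole count over $n$ positions) does not transfer to a fixed \emph{pair}, because the relative position of $c$ and $c'$ matters; you correctly sense this in your last paragraph but do not resolve it.

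The paper's own proof sidesteps all of this: it simply invokes Lemma~5 of \cite{BaranyValtr2004}, which is the per-pair estimate engineered for exactly this purpose in the original analysis of the random squared Horton set, applied there to show the $O(n^2)$ bound of Theorem~\ref{thm:randomHset-numberofholes}. So the correct fix for your proposal is not to prove a new decay estimate along the layer tree, but to recognize that the required $O(1)$ per-pair bound is already available in \cite{BaranyValtr2004} and to apply it to the pair $c,c'$; as written, your proposal has a genuine gap where that estimate should be.
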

	\begin{proof}
		We apply Lemma~5 from~\cite{BaranyValtr2004} in a similar way in which it is used in~\cite{BaranyValtr2004} for estimating the expected number of empty holes in the random squared Horton set.
	\end{proof}
	
	
	Theorems~\ref{thm:construction} and \ref{thm:construction2} will follow quite easily from the following lemma.
	
	\begin{lemma}\label{lma:construction-holes}
		For any $k\ge3$, the expected number of affine $k$-holes in $T$
		is at most $O(n^2+ab^k)$, and $T$ has at least $\Omega(ab^{k-1}n)$ projective holes.
	\end{lemma}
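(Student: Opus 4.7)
My plan is to prove the two halves of the lemma---the lower bound on the number of \projective $k$-holes and the upper bound on the expected number of \affine $k$-holes---separately, in this order.

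First, I will show that for every $c\in C$, every $(k-1)$-element subset $A'\subseteq S_c$, and every $t\in T\setminus S_c$, the set $A'\cup\{t\}$ is a \projective $k$-hole with chains $A'$ and $\{t\}$. The double-chain condition follows from property~(\ref{construction-item-P2}), since any line through two points of $S_c$ has all of $T\setminus S_c$ on one side, combined with the fact that $A'$ lies on a small arc of $D_c$ and is therefore in convex position. To verify emptiness of the wedge $W=W_A\cup W_B$, I will treat $W_A$ and $W_B$ separately: $W_A$ sits near $c$ on the disk side of the arc, and each $p\in S_c\setminus A'$ is excluded because $p$ fails either an internal chord edge (if $p$ lies between two consecutive $A'$-points in $x$-order, by concavity of the arc) or a transition edge (otherwise), while points of $T\setminus S_c$ are excluded by property~(\ref{construction-item-P2}); the region $W_B$ is an arbitrarily thin wedge at $t$ opposite to the $A'$-side, which avoids all other points of $T$ by property~(\ref{construction-item-P3}) together with the freedom in the construction to place $S_c$ arbitrarily close to $c$. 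A simple count then yields $|C|\cdot\binom{b}{k-1}\cdot(|T|-b)=\Omega(a\cdot b^{k-1}\cdot n)$ \projective $k$-holes.

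Next, I will bound the expected number of \affine $k$-holes of $T$ by partitioning them into four cases according to properties~(\ref{construction-item-P4})--(\ref{construction-item-P6}). In case~(i), holes contained entirely in some $S_c$ number at most $a\binom{b}{k}\leq ab^k$. In case~(ii), holes disjoint from $\bigcup_c S_c$ are \affine holes of $H$, and Theorem~\ref{thm:randomHset-numberofholes} bounds their expected count by $O(n^2)$. In case~(iii), for a hole $X$ meeting exactly one $S_c$, in one or two (necessarily consecutive) points by property~(\ref{construction-item-P5}), property~(\ref{construction-item-P6}) produces a hole $X'$ of $H$ of size $k$ or $k-1$ containing $c$, and each such $X'$ extends back to at most $O(b)$ choices of $X$; by Lemma~\ref{lma:onepoint} the expected number of $X'$ per $c$ is $O(n)$, giving a total of $O(a\cdot n\cdot b)\leq O(n^2)$ since $ab\leq n$. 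In case~(iv), for a hole $X$ meeting two sets $S_c,S_{c'}$, property~(\ref{construction-item-P6}) gives a hole $X'$ of $H$ of size between $k-2$ and $k$ containing both $c$ and $c'$; by Lemma~\ref{lma:twopoints} the expected number per pair is $O(1)$, and the extension multiplicity is $O(b^2)$, so the total is $O(a^2 b^2)=O((ab)^2)\leq O(n^2)$. Summing the four contributions yields $O(n^2+ab^k)$.

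The main obstacle will be the emptiness verification for $W_B$ in the \projective $k$-hole argument: property~(\ref{construction-item-P3}) only guarantees emptiness of the wedge associated with the full $S_c\cup\{t\}$, whereas the wedge for a proper subset $A'\subsetneq S_c$ can be strictly larger. To address this, I will rely on the freedom to choose the points of $S_c$ arbitrarily close to $c$ (after $H$ is fixed), which forces $W_B$ into an arbitrarily thin neighborhood of the line through $t$ and $c$, and I will invoke the general position of the perturbed set $H$ to ensure that this line contains no other point of $T$.
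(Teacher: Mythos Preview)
Your proposal is correct and follows essentially the same approach as the paper. The four-case partition for the upper bound matches the paper's Types~I--IV exactly; you invoke property~(\ref{construction-item-P6}) to pass from a hole $X$ of $T$ to a hole $X'$ of $H$ and then count preimages, whereas the paper equivalently counts the $a(2b-1)$ or $\binom{a}{2}(2b-1)^2$ possible intersections $X\cap S$ and invokes property~(\ref{construction-item-P1}) and Lemmas~\ref{lma:onepoint}--\ref{lma:twopoints}. Both organizations yield the same $O(abn)$ and $O((ab)^2)$ bounds.

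For the lower bound, the paper simply asserts in one line that any $(k-1)$-subset of $S_c$ together with any outside point is a projective $k$-hole, while you attempt a detailed wedge analysis. Your flagged ``main obstacle''---that the wedge $W_B$ for a proper subset $A'\subsetneq S_c$ could be strictly larger than the one for the full $S_c$---is actually not an obstacle. By property~(\ref{construction-item-P3}), $S_c\cup\{t\}$ is a projective $(b+1)$-gon that is empty in $T$; in any affine chart where it is a convex polygon, the convex hull of the subset $A'\cup\{t\}$ is contained in the convex hull of $S_c\cup\{t\}$, so emptiness is inherited immediately. Equivalently, in the original chart the extreme rays of $W_B(A')$ lie between those of $W_B(S_c)$, giving $W_B(A')\subseteq W_B(S_c)$, and the analogous containment holds for $W_A$. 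Your fallback argument via the freedom to shrink $S_c$ towards $c$ is valid but unnecessary.
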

	
	\begin{proof}
		We distinguish the following four types of affine holes $X$ in $T$.
		\begin{itemize}
		    \item \emph{Type I}: $X$ is a subset of one of the sets $S_c$.
		    \item \emph{Type II}: $X$ contains no point of $S=\bigcup_{c\in C}S_c$.
		    \item \emph{Type III}: $X$ contains at least one point of some set $S_c$, at least one point of
		$H\setminus S$, and no point of $S\setminus S_c$.
		    \item \emph{Type IV}: $X$ intersects (exactly) two of the sets $S_c$.
		\end{itemize}
		
		Due to Property~\ref{construction-item-P4}, each affine hole of $X$ is of one of the four types
		I, II, III, and IV.
		Thus, for proving the claimed upper bound on the expected number of affine holes in $T$, it suffices to show that 
		(i) the number of $k$-holes of type I in $T$ is at most $ab^k$ and 
		(ii) the expected numbers of holes of types II--IV in $T$ are at most $O(n^2)$.
		
		The number of $k$-holes of type I in $T$ is $a\cdot \binom{b}{k}<ab^k$.
		
		The expected number of holes of type II in $T$ is in $O(n^2)$ due to Theorem~\ref{thm:randomHset-numberofholes}.
		
		According to Property~\ref{construction-item-P5}, any hole of type III in $T$ intersects one of the sets $S_c$ in one or two consecutive points of $S_c$, and it contains no other points of $S=\bigcup_{c\in C} S_c$.
		There are $a$ sets $S_c$ and each of them has $b$ points and $b-1$ pairs of consecutive points. It follows that holes of type III in $T$ intersect $S$ in at most $a(b+(b-1))=a(2b-1)$ different ways. The expected number of holes of type III intersecting $S$ in one particular way is at most $O(n)$ due to Lemma~\ref{lma:onepoint} and Property~\ref{construction-item-P1}.
		It follows that the expected number of holes of type III is at most $a(2b-1)O(n)=O(abn)\le O(n^2)$.
		
		According to Property~\ref{construction-item-P5}, any hole of type IV in $T$ intersects two of the sets $S_c$ in one or two consecutive points, and it contains no other points of $S=\bigcup_{c\in C} S_c$.
		There are $\binom{a}{2}$ pairs of sets $S_c$. Similarly as above, each set $S_c$ can be intersected in $2b-1$ non-trivial ways by holes of type IV. It follows that holes of type IV in $T$ intersect $S$ in at most $\binom{a}{2}(2b-1)^2$ different ways. The expected number of holes of type IV intersecting $S$ in one particular way is at most $O(1)$ due to Lemma~\ref{lma:twopoints} and Property~\ref{construction-item-P1}.
		It follows that the expected number of holes of type IV is at most
		$\binom{a}{2}(2b-1)^2O(1)=O((ab)^2)\le O(n^2)$.

		It remains to show that $T$ has at least $\Omega(ab^{k-1}n)$ projective holes.
		Any $(k-1)$-element subset $X_1$ of any set $S_c$ forms a projective hole in $T$ together with any point $p\in H\setminus S_c$, where $X_1$ and $\{p\}$ are the chains of the projective hole.
		Thus, there are at least $a\binom{b}{k-1}\cdot (|H|-b)=\Theta(ab^{k-1}n)$ projective holes in $T$.
	\end{proof}

	\subsection{Proof of Theorem~\ref{thm:construction}}
	
	The first part of Theorem~\ref{thm:construction} is the special case $\alpha=0$ in the second part.
	
	Lemma~\ref{lma:construction-holes} immediately gives the second part of Theorem~\ref{thm:construction},
	if the parameters $a$ and $b$ are set as follows.
	For $\alpha\in[0,\frac{2k-5}3]$ we set
	\[
	a:=\min\{|C'|,n^{1/3}\}\in\Theta(n^{1/3}),
	\]
	\[
	b:=n^{(5/3+\alpha)/k}.
	\]
	
	For $\alpha\in(\frac{2k-5}3,k-2]$, we set
	
	\[
	a:=\min\{|C'|,n^{1-(1+\alpha)/(k-1)}\}\in\Theta(n^{1-(1+\alpha)/(k-1)}),
	\]
	\[
	b:=n^{(1+\alpha)/(k-1)}.
	\]

	\subsection{Proof of Theorem~\ref{thm:construction2}}
	
	We apply Lemma~\ref{lma:construction-holes} with the parameters
	$a:=2$ and $b:=\lfloor\log_2x\rfloor$.
	The number of affine holes in $T$ with at most $8$ vertices 
	is at most $O(n^2+\sum_{i=3}^8 b^i)=O(n^2+b^8)$ by
	Lemma~\ref{lma:construction-holes}. Each affine hole with more than $8$ vertices lies completely in one of the two sets $S_c$ forming $S$ due to Theorem~\ref{thm:randomHset-no7holes}
	and Properties~\ref{construction-item-P1}, \ref{construction-item-P4}, and \ref{construction-item-P5}. Thus there are less than $O(2\cdot 2^b)=O(x)$ of them.
	It follows that $T$ has at most $O(n^2+b^8)+O(x)=O(n^2+x)$ holes.
	
	On the other hand, any subset of $S$ of size more than $2$ is a projective hole,
	thus $T$ has at least $2^{2b}-\binom{2b}{2}-2b-1=\Theta(x^2)$ projective holes.

	\section{Proof of Proposition~\ref{proposition:manyprojective34holes}}
	\label{sec:more_3holes_and_4holes_proof}

	Here, we prove the following two inequalities about the minimum number of projective 3- and 4-holes in sets of $n$ points in $\RPP$ in general position:
	\[
	h_3^p(P) 
	\ge h_3(P) + \frac{1}{3} \binom{n}{2} \;\;\;\;\;\text{ and }\;\;\;\;\;
	h_4^p(P) 
	\ge h_4(P) + \frac{1}{2} \left(\binom{n}{2}-3n+3\right).
	\]
	To obtain these estimates, we use ideas from~\cite{BaranyFueredi1987}.
	
	Let $P$ be a set of $n$ points in $\mathbb{R}^2 \subset \RPP$ in general position.
	Any two points $p$ and $q$ determine a projective line that can be divided into two projective line segments, exactly one of which is a line segment in $\mathbb{R}^2$.
	We use $pq$ to denote this line segment and we use $\overline{pq}-pq$ to denote the other projective line segment determined by the projective line $\overline{pq}$.
	Let $S$ be the set of all projective line segments $\overline{pq}-pq$ determined by two points $p$ and $q$ of $P$.
	We let $S'$ be the set of all projective line segments from $S$ that are intersected in an interior point by at least one projective line segment from~$S$.
	
	The projective line segments from $S \setminus S'$ and the points of $P$ determine a drawing of a graph in the real projective plane with no two edges crossing.
	By Euler's formula for the real projective plane, this graph has at most $3n-3$ edges and thus $|S\setminus S'| \leq 3n-3$.
	This implies $|S'| \geq \binom{n}{2}-3n+3$.
	
	For every projective line segment $s$ from~$S$,
	consider a plane $\rho_s \subset \RPP$ which fully contains~$s$.
	In $\rho_s$, the two endpoints of $s$ form an \affine 3-hole with the point of $P$ which is closest to~$s$.
	
	For every projective line segment $s'$ from $S'$,
	there is another projective line segment $t'$ from~$S'$ such that $s'$ and $t'$ cross.
	Let $\rho_{s',t'} \subset \RPP$ be a plane
	which fully contains $s'$ and~$t'$.
	It follows from a result by 
	B\'ar\'any and F\"uredi \cite{BaranyFueredi1987} 
	that $s'$ forms a diagonal of an \affine 4-hole in~$\rho_{s',t'}$.
	
	Thus, by Observation~\ref{obs-holes}, the set $P$ contains a \projective 3-hole
	for each $s \in S$
	and a \projective 4-hole for each $s' \in S'$.
	Since we count each such a 3-hole for at most 3 projective line segments from~$S$ and 
	each such a 4-hole for at most 2 projective line segments from~$S'$, 
	$P$ contains at least $\frac{1}{3} \binom{n}{2}$ \projective 3-holes 
	and at least $\frac{1}{2} \left(\binom{n}{2}-3n+3\right)$ \projective 4-holes, which contain a projective line segment from~$S$ (recall that $S' \subset S$).
	Since none of the projective line segments from $S$ is contained in $\mathbb{R}^2$,
	none of these \projective holes is an \affine hole in $\mathbb{R}^2$.
	This completes the proof of Proposition~\ref{proposition:manyprojective34holes}.

	\section{Proof of Theorem~\ref{theorem:random_sets}}
	\label{sec:random_sets}

	Let $K$ be a compact convex subset of $\RPP$ of unit area
	and let $P$ be a set of $n$ points chosen uniformly and independently at random from $K \subset \mathbb{R}^2 \subset \RPP$.
	We show that the expected number of projective 3-holes in $P$ is in $\Theta(n^2)$.
	We first derive the upper bound $O(n^2)$ in Subsection~\ref{sec:random_sets_upper}.
	The quadratic lower bound follows from Proposition~\ref{proposition:manyprojective34holes}.
	In Subsection~\ref{sec:random_sets_lower} we prove that the expected number of \projective holes, which are not \affine holes in $\mathbb{R}^2$, is in $\Theta(n^2)$. 
	
	By Observation~\ref{obs-holes},
	it suffices to estimate the expected number of 3-holes and double chain 3-wedges that are empty in sets of $n$ points chosen uniformly and independently at random from a convex body $K \subseteq \mathbb{R}^2$ of unit area.
	The expected number of such 3-holes is $2n^2+o(n^2)$ for any planar convex body $K$ by a result of Reitzner and Temesvari~\cite{ReitznerTemesvari2019}.
	Thus, it remains to estimate the expected number of the empty double chain 3-wedges.
	Using a notation from Subsection~\ref{sec:prelim}, 
	for the 3-wedge $W$ of a double chain $A \cup B$ with vertices $A=\{a_1\}$ and $B=\{a_2,a_3\}$, we call the unique point $a_1$ from $A$ 
	the \emph{apex} of~$W$.

	\subsection{The upper bound}
	\label{sec:random_sets_upper}
	
	Let $S$ be a set of $n \geq 4$ points chosen uniformly and independently at random from a convex body $K \subseteq \mathbb{R}^2$ of unit area.
	For a point $p \in S$, we show that the expected number of double chain 3-wedges with apex $p$ that are empty in $S$ is at most $cn$ for some constant $c$.
	
	We label the points of $S\setminus \{p\}$ as $p_1,\ldots,p_{n-1}$
	in the clockwise circular order around $p$.
	For integers $i$ and $j$ with $1 \leq i < j \leq n-1$, let $E_{i,j}$ be the event that the points $p,p_i,p_j$ form a double chain 3-wedge with apex $p$ that is empty in $S$.
	
	\begin{lemma}
		\label{lemma:event_Eij}
		For any $i,j \in \{1,\ldots,n-1\}$ with $i<j$ and $k=\min\{j-i,n-1-(j-i)\}$,
		we have $\Pr[E_{ij}] \le \frac{4}{(k+1)^2}$.
	\end{lemma}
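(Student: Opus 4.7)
The plan is to condition on the positions of $p, p_i, p_j$, express $\Pr[E_{ij}]$ as an integral using the multinomial formula for the remaining $n-3$ iid uniform points, and then bound this integral by a Bárány–Füredi-style geometric estimate.

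First, I would condition on $p$ and on specific positions $q_1=p_i$, $q_2=p_j$. The two lines $\overline{pq_1}, \overline{pq_2}$ partition the plane into four angular sectors $Q_1,\dots,Q_4$ at $p$, with $Q_1$ containing the interior of $\triangle pq_1q_2$ and its opposite sector $Q_3$ equal to $W_A$. The bad region inside $K$ for the 3-wedge to be non-empty is $R_{\mathrm{bad}}:=(W_A\cup W_B)\cap K = (Q_3\cap K)\cup((Q_1\cap K)\setminus\triangle pq_1q_2)$; the complement in $K$ decomposes into the between region $R_{\mathrm{between}}=\triangle pq_1q_2\cap K$ (directions in $Q_1$, inside the triangle) and the two lateral regions $R_{\mathrm{before}},R_{\mathrm{after}}$ (directions in $Q_2\cup Q_4$, separated according to whether a point falls at a clockwise position before $p_i$ or after $p_j$). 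Writing $\pi_\bullet$ for the respective areas, which are also the probabilities that a uniform point in $K$ lies in each region, the event $E_{ij}$ together with the requirement that the clockwise labels of $q_1,q_2$ are $i,j$ forces the $n-3$ remaining iid points to be distributed as exactly $i-1$ in $R_{\mathrm{before}}$, $k-1$ in $R_{\mathrm{between}}$, $n-1-j$ in $R_{\mathrm{after}}$, and $0$ in $R_{\mathrm{bad}}$.

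Summing the resulting multinomial probability over the $(n-1)(n-2)$ ordered choices of which iid points play the roles of $p_i,p_j$ and integrating over $q_1,q_2\in K$ yields
\[
\Pr[E_{ij}]=\int_{K\times K} \frac{(n-1)!}{(i-1)!\,(k-1)!\,(n-1-j)!}\,\pi_{\mathrm{before}}^{\,i-1}\,\pi_{\mathrm{between}}^{\,k-1}\,\pi_{\mathrm{after}}^{\,n-1-j}\, dq_1\,dq_2.
\]
Applying the single-term binomial inequality $\binom{n-2-k}{i-1}\pi_{\mathrm{before}}^{i-1}\pi_{\mathrm{after}}^{n-1-j}\leq(\pi_{\mathrm{before}}+\pi_{\mathrm{after}})^{n-2-k}$ together with $\pi_{\mathrm{before}}+\pi_{\mathrm{after}}\leq 1-\pi_{\mathrm{between}}$ collapses the $i$-dependence and leaves
\[
\Pr[E_{ij}]\leq (n-1)(n-2)\binom{n-3}{k-1}\int_{K\times K}(1-\pi_{\mathrm{between}})^{n-2-k}\,\pi_{\mathrm{between}}^{\,k-1}\,dq_1\,dq_2.
\]

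The hard part is bounding the last integral. Here $\pi_{\mathrm{between}}(q_1,q_2)=\mathrm{area}(\triangle pq_1q_2\cap K)$ is a nonlinear function of $q_1,q_2$, and the integrand concentrates near $\pi_{\mathrm{between}}\approx k/n$. The crucial observation is that in polar coordinates around $p$ one has $\pi_{\mathrm{between}}\leq\tfrac12 r_1r_2\sin\alpha$ (with equality when $\triangle pq_1q_2\subseteq K$), and the two radial integrations contribute a factor $\bigl(\int_0^{R(\phi)}r^k\,dr\bigr)^2\asymp 1/(k+1)^2$ after normalization by the unit-area condition on $K$. Combined with the beta-function identity $(n-1)(n-2)\binom{n-3}{k-1}\,B(k,n-1-k)=n-1$ (which, applied alone, would only give the useless bound $\Pr[E_{ij}]\leq n-1$) and a bookkeeping of the angular $(\sin\alpha)^{k-1}$ factor, this yields the desired $\Pr[E_{ij}]\leq 4/(k+1)^2$. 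The main technical challenge is making the $1/(k+1)^2$ radial gain uniform in $K$: for a disk centered at $p$ the polar calculation is direct, while for general convex $K$ one must either invoke an affine reduction or work directly with the level-set decomposition of $\pi_{\mathrm{between}}$ on $K\times K$ to ensure that the radial power is not lost.
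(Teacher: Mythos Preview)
Your proposal has a genuine gap at the decisive step. After reducing to
\[
(n-1)(n-2)\binom{n-3}{k-1}\int_{K\times K}(1-\pi_{\mathrm{between}})^{n-2-k}\,\pi_{\mathrm{between}}^{\,k-1}\,dq_1\,dq_2,
\]
you do not bound this integral; you only assert that radial integrations ``contribute a factor $\asymp 1/(k+1)^2$'' and that angular bookkeeping plus the beta identity then yield $4/(k+1)^2$. No computation is carried out, the constant $4$ is never derived, and you yourself flag ``making the $1/(k+1)^2$ radial gain uniform in $K$'' as the main technical challenge---which you leave open. The argument stops exactly where the work begins. (A minor slip: your multinomial count places $k-1$ points in $R_{\mathrm{between}}$, tacitly assuming $k=j-i$; the case $k=n-1-(j-i)$ needs the symmetric version.)

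The paper bypasses the integral entirely with a short conditional argument. Let $X=K\cap(\text{open cone between }\overrightarrow{pp_i},\overrightarrow{pp_j})$; by the definition of $k$ there are at least $k-1$ other points in $X$, and if the $3$-wedge is empty then $A':=A\cup\{p_i,p_j\}$ lies in $\triangle'=\triangle pp_ip_j$. An area-preserving affine map puts the boundary hits $b,c$ of the two rays at the same $x$-coordinate, giving $\triangle:=\triangle pbc\supseteq\triangle'$ and hence $\Pr[E_{ij}]\le\Pr[A'\subset\triangle'\mid A'\subset\triangle]$. Conditionally the $|A'|\ge k+1$ points are iid uniform in a triangle with apex $p$, so the probability that one of the two designated points $p_i,p_j$ is rightmost is at most $2/(k+1)$; given that $p_i$ is rightmost at $x$-coordinate $x_i$, integrating the density $\tfrac{2x_j}{x_i}$ of $x_j$ against $(x_j/x_i)^{k-1}$ over $[0,x_i]$ gives another $2/(k+1)$. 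The product is $4/(k+1)^2$, with no dependence on the shape of $K$ and no two-dimensional integral to estimate.
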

	\begin{proof}
		Let $X$ denote the intersection of $K$ with the open convex cone spanned by the rays $\overrightarrow{pp_i}$ and $\overrightarrow{pp_j}$; see part~(a) of Figure~\ref{fig:projective3holes2}.
		We let $b$ be intersection point of the ray $\overrightarrow{pp_i}$ with the boundary of $K$.
		Similarly, $c$ is the intersection point of the ray $\overrightarrow{pp_j}$ with the boundary of $K$.
		We assume without loss of generality that $p$ is the origin and that the points $b$ and $c$ lie to the right of $p$ and have the same $x$-coordinate.
		Otherwise we apply a suitable volume preserving affine transformation of $\mathbb{R}^2$,
		which does not affect the expected value.
		
		\begin{figure}[htb]
			\centering
			\includegraphics[page=4]{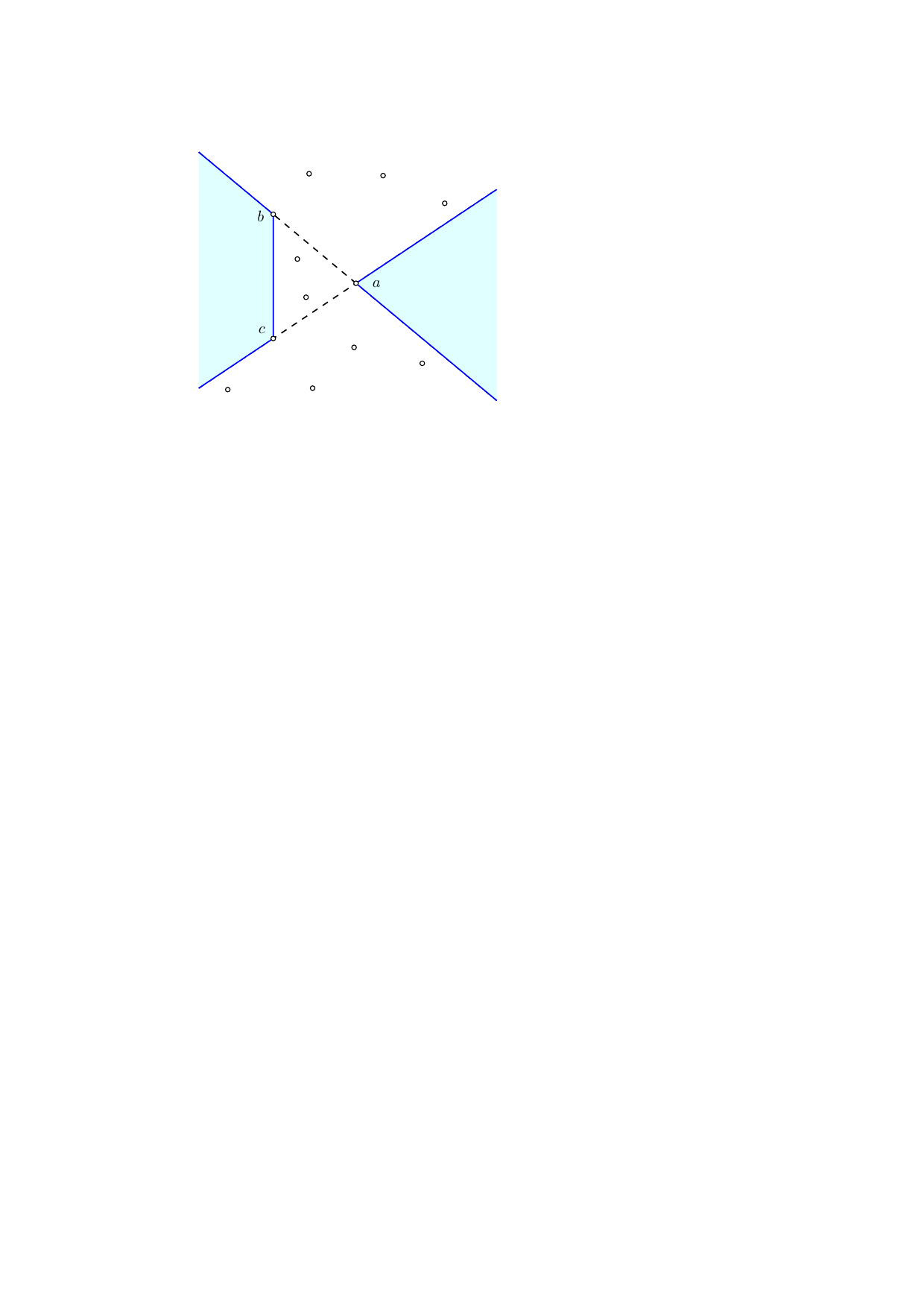}
			\caption{(a) Illustration of the proof of Lemma~\ref{lemma:event_Eij}. (b) Illustration of the proof of the lower bound in Theorem~\ref{theorem:random_sets}.}
			\label{fig:projective3holes2}
		\end{figure}
		
		By the choice of $k$, there are at least $k-1$ points of $S \setminus \{p\}$ in~$X$.
		We denote the set of these points by~$A$.
		Then either $A=\{p_{i+1},\ldots,p_{j-1}\}$ or $A=\{p_{j+1},\ldots,p_{n-1},p_1,\ldots,p_{i-1}\}$.
		We also let $A' = A\cup \{p_i,p_j\}$.

		If the points $p,p_i,p_j$ form a double chain 3-wedge with apex $p$ that is empty in $S$, then all points from $A'$ lie in the triangle $\triangle'$ spanned by $p$, $p_i$, and $p_j$.
		Therefore, we have
		\[
		\Pr[E_{i,j}] 
		\le
		\Pr[A' \subset \triangle']
		\le
		\Pr[A' \subset \triangle' | A' \subset \triangle],
		\]
		where $\triangle$ is the triangle spanned by the points $p$, $b$, and $c$.
		The second inequality follows from the fact that $\triangle' \subseteq \triangle$.
		
		From now on, we assume that all points from $A'$ lie in $\triangle'$.
		Since our distribution is uniform, each of the two points $p_i$ or $p_j$ is the rightmost point of $A'$ with probability $\frac{1}{|A|+2} \leq \frac{1}{k+1}$, as $|A| \geq k-1$. 
		We thus obtain $\Pr[A' \subset \triangle' | A' \subset \triangle] \le 
		\frac{2}{k+1} \cdot \rho$, where
		\[
		\rho = \Pr[A' \subset \triangle' | A' \subset \triangle \text{ and } 
		\text{$p_i$ or $p_j$ is the rightmost point of $A'$}].
		\]
		
		By symmetry, we can assume without loss of generality that $p_i$ is the rightmost point of $A'$.
		Let $x_i$ and $x_j$ be the $x$-coordinates of the points $p_i$ and $p_j$, respectively.
		Note that, since $p_j$ is selected from the triangle, 
		the probability that $p_j$ has $x$-coordinate $x_j$ is precisely $\frac{2}{x_i} x_j$ for every $x_j \in [0,x_i]$,
		and the event $A \subset \Delta'$ happens 
		with probability $(\frac{x_j}{x_i})^{|A|}$.
		Thus, we obtain
		\[
		\rho
		=
		\int_{0}^{x_i}  
		\frac{2}{x_i} \cdot x_j
		\cdot
		\left(\frac{x_j}{x_i}\right)^{|A|}
		{\rm d}x_j
		\le 
		\int_{0}^{x_i}  
		{\frac{2}{x_i} \cdot x_j}
		\cdot
		{\left(\frac{x_j}{x_i}\right)^{k-1}}
		{\rm d}x_j
		= 
		\left[
		\frac{2}{x_i^{k+1}}
		\cdot 
		\frac{x_j^{k+1}}{k+1}
		\right]_{0}^{x_i}
		= \frac{2}{k+1}
		,
		\] 
		where the inequality follows from the fact $|A|\geq k-1$ and $x_j \leq x_i$. 
	\end{proof}
	
	By Lemma~\ref{lemma:event_Eij},
	the expected number of double chain 3-wedges with apex $p$ that are empty in $S$ is
	\[
	\sum_{\substack{i,j \in \{1,\dots,n-1\}\\ i<j}} \Pr[E_{i,j}] 
	\le 
	(n-1) \cdot \sum_{k=1}^{\lfloor \frac{n-2}{2}  \rfloor }\frac{4}{(k+1)^2} 
	\le 
	(n-1) \cdot \sum_{k=1}^{\infty} \frac{4}{k^2} 
	=
	(n-1) \cdot \frac{2\pi^2 }{3}.
	\]
	which gives the upper bound $\frac{2\pi^2 }{3} n(n-1)$ on the expected number of double chain 3-wedges that are empty in $S$.

	\subsection{The lower bound}
	\label{sec:random_sets_lower}

	For each point $p \in S$,
	we first show that the expected number of empty double chain 3-wedges in $S$ with apex $p$ is at least $cn$ for some positive constant $c$.
	Again, we consider the points $p_1,\ldots,p_{n-1}$ of $S\setminus \{p\}$ ordered in the circular clockwise order around $p$.
	
	For some fixed $i \in \{1,\dots,n-1\}$, the lines $\overline{pp_i}$ and $\overline{pp_{i+1}}$ partition $K$ into four regions.
	We denote the region not incident to $p_i$ nor $p_{i+1}$ as $R$, the two regions adjacent to $R$ by $Q_1$ and $Q_2$, and we let $Q=Q_1 \cup Q_2$; see part~(b) of Figure~\ref{fig:projective3holes2}.
	Now, we distinguish three cases depending on the size $|R \cap S|$.
	
	\textbf{Case 1:} 
	Assume $|R \cap S| = 0$.
	Then the points $p$, $p_i$, and $p_{i+1}$ are vertices of an empty double chain 3-wedge with apex $p$, as $p_i$ and $p_{i+1}$ are consecutive in the radial order around $p$.
	
	\textbf{Case 2:}
	Assume $|R \cap S| \ge 2$.
	There are at least two consecutive points $p_j$ and $p_{j+1}$ in $R$.
	Then, the points $p$, $p_j$, and $p_{j+1}$ are vertices of an empty double chain 3-wedge with apex $p$, as $p_i$ and $p_{i+1}$ are also consecutive in the radial order around $p$.
	Note that this double chain 3-wedge is also counted in Case~1 when $p_j$ and $p_{j+1}$ play the role of~$p_i$ and~$p_{i+1}$, respectively.
	
	\textbf{Case 3:}
	Assume $|R \cap S| = 1$.
	We do not find an empty double chain 3-wedge with apex $p$ in this case, but
	we show that this case occurs with probability at most $1/2$ if $n \geq 5$.
	Since the points are uniformly distributed, the probability that a randomly chosen point from $S \setminus \{p\}$ lies in $Q$ equals 
	$\rho = \frac{\area(Q)}{\area(Q \cup R)}$.
		To show $\Pr[|R \cap S|=1] \le 1/2$, observe that
	\[
	\Pr[|R \cap S|=1] 
	=
	(n-3)(1-\rho)\rho^{n-4},
	\]
	as there are $n-3$ points from $S\setminus\{p,p_i,p_{i+1}\}$ and a single one of them lies in $R$ while the remaining $n-4$ points all lie in $Q$.
	Putting $f(x) = (1-x)x^{n-4}$ for $x \in [0,1]$,
	the function $f$ has a unique maximum at $x=1-\frac{1}{n-3}$.
	Moreover, the value $(n-3)f( 1-\frac{1}{n-3}) = (1-\frac{1}{n-3})^{n-4}$
	converges to $\frac{1}{e}$ as $n$ goes to infinity.
	Since $(1-\frac{1}{n-3})^{n-4}$ decreases with increasing $n$ and $(1-\frac{1}{5-3})^{5-4} = \frac{1}{2}$,
	we obtain $\Pr[|R \cap S|=1] \le \frac{1}{2}$ for $n \ge 5$.
	
	\medskip
	
	Since there are $n(n-1)$ possibilities for choosing $p$ and $p_i$, the Cases~1 and~2, which occur with probability at least $1/2$, give the desired quadratic lower bound on the expected number of double chain 3-wedge that are empty in $S$.

	\section{Proof of Theorem~\ref{thm:efficient_counting}}
	\label{sec:counting}

	Let $S$ be a set of $n$ points in the Euclidean plane in general position.
	Mitchell et al.~\cite{MitchellRSW1995} use a dynamic programming approach to determine, for every point $p \in S$, the number of $k$-gons and $k$-holes for $k=3,\ldots,m$, which have $p$ as the bottom-most point.
	The algorithm performs in
	$O(m n^2)$ time and space.
	They also determine the number of $k$-islands in $S$, which have $p$ as the bottom-most point, 
	in $O(m^2 n^3)$ time and space.
	Note that the bottom-most point is unique without loss of generality, as otherwise we perform an affine transformation which does not affect the number of $k$-gons, $k$-holes, and $k$-islands. 
	
	Here, we introduce an algorithm that efficiently computes the number of \projective $k$-gons, $k$-holes, and $k$-islands of a finite set $P$ of $n$ points from $\mathbb{R}^2 \subset \RPP$.
	First, we discuss how to determine the number of \projective $k$-gons in~$P$.
	
	Let $G$ be a \projective $k$-gon with $k\ge 3$
	and let $p_1,p_2$ be two vertices that are consecutive on the boundary of~$G$. 
	If we start at $p_1$
	and trace the boundary of~$G$ in the direction of~$p_2$,
	we obtain a unique cyclic permutation $p_1,\ldots,p_k$ of the vertices of~$G$.
	By starting at $p_2$ and tracing in the direction of $p_1$, we obtain the reversed cyclic permutation.
	It is crucial that, independently from the starting point and the direction, only the $k$ pairs $\{p_i,p_{i+1}\}$ for $i=1,\ldots,k$ (indices modulo~$k$) appear as consecutive vertices along the boundary of~$G$.
	
	For every pair of points $\{s,t\} \in P$, the algorithm will
	count (with multiplicities) the number of \projective $k$-gons in~$P$, which have $s$ and $t$ as consecutive vertices on the boundary.
	Since each \projective $k$-gon is counted exactly $k$ times,
	we can then derive the number \projective $k$-gons in~$P$ by a simple division by~$k$.
	
	For a pair $\{s,t \}$ of distinct points from~$P$, 
	we can choose a line $\ell_{s,t}^+$ ($\ell_{s,t}^-$) which is parallel to the line $\overline{st}$ and lies very close and to the left (right) of $\overline{st}$.
	By removing $\ell_{s,t}^+$ and $\ell_{s,t}^-$, respectively, from $\RPP$, we obtain
	two planes $\rho_{s,t}^+ \subset \RPP$ and $\rho_{s,t}^- \subset \RPP$.
	Now,
	every \projective $k$-gon~$G$ of~$P$, which has $s$ and $t$ as  consecutive vertices on its boundary, is a convex $k$-gon either in $\rho_{s,t}^+$ or in $\rho_{s,t}^-$, but not in both. 
	Note that in both planes~$\rho_{s,t}^+$ and $\rho_{s,t}^-$,   
	$s$ and $t$ lie on the boundary of the convex hull of~$P$.
	Moreover, we can assume that $s$ is the bottom-most point in both planes $\rho_{s,t}^+$ and~$\rho_{s,t}^-$, 
	as otherwise we apply a suitable rotation.
	
	For each of the $\binom{n}{2}$ pairs $\{s,t \}$ of distinct points from~$P$,
	we now count the number of convex $k$-gons in the planes $\rho_{s,t}^+$ and $\rho_{s,t}^-$,
	which have $s$ and $t$ as consecutive vertices on the boundary. 
	This counting can be done in $O(m n^2)$ time and space by using the algorithm of Mitchell et al.~\cite{MitchellRSW1995} with the slight modification that, in the initial phase,  we only count $3$-gons of the form $p_1=s,p_2=t,p_3$; see equation~(3) in~\cite{MitchellRSW1995}.
	Since each \projective $k$-gon $G$ is now counted precisely $k$ times, once for each pair of consecutive vertices along the boundary of~$G$,  this completes the argument for \projective $k$-gons.
	
	Similarly, we count \projective $k$-holes and $k$-islands.
	The time and space requirements of the algorithm from~\cite{MitchellRSW1995}
	for counting \projective $k$-holes, which are incident to the bottom-most point, are the same as for \projective $k$-gons.
	For counting \projective $k$-islands, which are incident to the bottom-most point, the algorithm from \cite{MitchellRSW1995} uses $O(m^2 n^3)$ time and space.

	
	\bibliographystyle{alphaabbrv-url}
	\bibliography{references}

	
	\appendix
	\newpage

\end{document}